%
\documentclass[12pt, reqno]{amsart}
\usepackage{amsmath, amsthm, amscd, amsfonts, amssymb, graphicx, color, mathrsfs}
\usepackage[bookmarksnumbered, colorlinks, plainpages]{hyperref}
\usepackage[all]{xy}
\usepackage{slashed}
\usepackage{tipa}
\usepackage{soul}
\usepackage{cancel}
\usepackage{ulem}

\textheight 22.5truecm \textwidth 14.5truecm
\setlength{\oddsidemargin}{0.35in}\setlength{\evensidemargin}{0.35in}

\setlength{\topmargin}{-.5cm}

\newtheorem{theorem}{Theorem}[section]
\newtheorem{lemma}[theorem]{Lemma}

\newtheorem{corollary}[theorem]{Corollary}
\theoremstyle{definition}
\newtheorem{definition}[theorem]{Definition}
\newtheorem{example}[theorem]{Example}

\theoremstyle{remark}
\newtheorem{remark}[theorem]{Remark}
\numberwithin{equation}{section}

\begin{document}
\setcounter{page}{1}

\title[ $L^p$-bounds on graded Lie groups  ]{$L^p$-bounds for pseudo-differential operators on graded Lie groups }

\author[D. Cardona]{Duv\'an Cardona}
\address{
  Duv\'an Cardona:
  \endgraf
  Department of Mathematics: Analysis, Logic and Discrete Mathematics
  \endgraf
  Ghent University, Belgium
  \endgraf
  {\it E-mail address} {\rm duvanc306@gmail.com, duvan.cardonasanchez@ugent.be}
  }
  
  \author[J. Delgado]{Julio Delgado}
\address{
  Julio Delgado:
  \endgraf
  Departmento de Matem\'aticas
  \endgraf
  Universidad del Valle
  \endgraf
  Cali-Colombia
    \endgraf
    {\it E-mail address} {\rm delgado.julio@correounivalle.edu.co}
  }

\author[M. Ruzhansky]{Michael Ruzhansky}
\address{
  Michael Ruzhansky:
  \endgraf
  Department of Mathematics: Analysis, Logic and Discrete Mathematics
  \endgraf
  Ghent University, Belgium
  \endgraf
 and
  \endgraf
  School of Mathematical Sciences
  \endgraf
  Queen Mary University of London
  \endgraf
  United Kingdom
  \endgraf
  {\it E-mail address} {\rm michael.ruzhansky@ugent.be}
  }

\subjclass[2010]{Primary 22E30; Secondary 58J40.}

\keywords{Pseudo-differential operator, Graded Lie group, Symbolic calculus, $L^p$-estimates}

\thanks{The first author was supported  by the FWO Odysseus Project. The second author was supported by the Leverhulme Grant RPG-2017-151 and by Grant CI71234 Vic. Inv. Universidad del Valle. The third author was supported in parts by the FWO Odysseus Project and also  by the Leverhulme Grant RPG-2017-151 and  EPSRC grant EP/R003025.}

\begin{abstract} In this work
we obtain  sharp  $L^p$-estimates  for  pseudo-differential operators on arbitrary  graded Lie groups.
 The results are presented within the setting of the global symbolic calculus on graded Lie groups  by using the Fourier analysis associated to every graded Lie group which extends the usual one due to H\"ormander on $\mathbb{R}^n$. The main result extends the classical Fefferman's  sharp theorem on the $L^p$-boundedness of pseudo-differential operators  for H\"ormander classes on $\mathbb{R}^n$ to general graded Lie groups, also adding   the borderline $\rho=\delta$ case. 
\end{abstract} \maketitle

\tableofcontents
\allowdisplaybreaks
\section{Introduction}
 The investigation of the $L^p$ boundedness of pseudo-differential operators is a crucial task for a large 
 variety of problems in mathematical analysis and its applications, mainly due to its consequences for the  regularity, approximation and existence of solutions on $L^p$-Sobolev spaces. 
 There is an extensive literature on the subject, in particular, devoted to operators associated with symbols belonging to the H\"ormander classes $S^m_{\rho,\delta}(\mathbb{R}^n\times \mathbb{R}^n),$ (see for instance, J.J. Kohn and L. Nirenberg \cite{KohnNirenberg1965}, L. H\"ormander \cite{HormanderBook34}  and C. Fefferman \cite{Fefferman1973}). Applications of the $L^p$-estimates for H\"ormander classes to PDE, particularly, to the  $L^p$-theory of linear and non-linear elliptic and hypoelliptic equations can be found e.g. in  the book of  M. Taylor \cite{Taylorbook1981} and the seminal  volumes III and IV of  H\"ormander's book \cite{HormanderBook34}. Although the classical $L^p$-estimates are established for smooth symbols, a careful analysis could provide $L^p$-estimates for versions of H\"ormander classes with symbols of limited regularity allowing to apply these results to some (possibly) non-linear PDE whose coefficients could have limited regularity (see e.g. M.  Taylor \cite{Taylorbook1981}, J. M. Bony \cite{Bony81,Bony84}, G. Bourdaud \cite{Bourdaud88}, H. Kumano-Go and M. Nagase \cite{Kumano-goNagase} and R. Coifman and Y. Meyer \cite{CoifmanMeyer78}).      The purpose of this paper is to prove sharp $L^p$-estimates for pseudo-differential operators on graded Lie groups. Our main goal is to extend a classical and sharp result by C. Fefferman \cite{Fefferman1973} and to provide a critical order for the $L^p$-boundendess of pseudo-differential operators on graded Lie groups based on the quantization procedure developed by the third author and V. Fischer in \cite{FischerRuzhanskyBook2015}.
 
 As it was reviewed in \cite[page 16]{FischerRuzhanskyBook2015}, the  analysis on homogeneous Lie groups and also on other types
of Lie groups has received another boost with newly found applications and further
advances in many topics in the last years. The particular case of graded Lie groups appears naturally in the analysis on nilpotent Lie groups and smooth  manifolds providing an
abstract setting for many notions of Euclidean analysis. The most natural group appearing in this context as the less non-commutative nilpotent group, is the Heisenberg group.  Its study and its applications are a very active field of research (see e.g. R. Howe \cite{Howe}) due to its role in the  interplay between  analysis, geometry,  representation theory and  sub-Riemannian geometry aspects. 

Nilpotent Lie groups by themselves appear  as local models  in the works on the construction of parametrices for the Kohn-Laplacian (the Laplacian associated to the tangential CR complex on the boundary $X$ of a strictly pseudo-convex domain) and for other differential operators. The corresponding parametrices and subsequent subelliptic estimates
have been obtained by Folland and Stein in \cite{FollandStein74} by first establishing a version
of the results for a family of sub-Laplacians on the Heisenberg group, and then
for the Kohn-Laplacian, by replacing $X$ locally by the Heisenberg group. These
ideas soon led to powerful generalisations. Indeed, the general techniques for approximating differential operators on a manifold by left-invariant operators on a nilpotent Lie group
have been developed by Rothschild and Stein in \cite{RothschildStein76}. 
  Since our results herein absorb those of Fefferman \cite{Fefferman1973} which are sharp,  we recall the following sharp $L^p$-estimate  due to Charles Fefferman.  Because the $L^p$-boundedness of operators in the class  $S^0_{1,0}(\mathbb{R}^n\times {\mathbb{R}^n}),$ was known previously to  Fefferman's work (see Kohn and Nirenberg \cite{KohnNirenberg1965}), the reference \cite{Fefferman1973}  only considers the case $0<\rho<1$.
\begin{theorem}\label{FT}
Let $A:C^\infty(\mathbb{R}^n)\rightarrow\mathscr{D}'(\mathbb{R}^n)$ be a pseudo-differential operator with symbol $\sigma\in S^{-m}_{\rho,\delta}(\mathbb{R}^n\times {\mathbb{R}}^n ),$  $0\leq \delta<\rho< 1.$ Then,
\begin{itemize}
    \item{\textnormal{(a)}} if $m=\frac{n(1-\rho)}{2},$ then $A$ extends to a bounded operator from $L^\infty(\mathbb{R}^n)$ to $ BMO(\mathbb{R}^n)$ and also admits a bounded extension from the Hardy space $H^1(\mathbb{R}^n)$ to $L^1(\mathbb{R}^n)$. Moreover, for all $1<p<\infty,$ $A$ admits a bounded extension on $L^p(\mathbb{R}^n).$
   \item{\textnormal{(b)}} If $m\geq m_{p}:= n(1-\rho)\left|\frac{1}{p}-\frac{1}{2}\right|,$ then $A$ extends to a bounded operator on $ L^p(\mathbb{R}^n),$ for $1<p<\infty.$ 
\end{itemize}
\end{theorem}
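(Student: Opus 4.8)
The plan is to reduce everything to the borderline estimate in part (a) and then obtain part (b) by interpolation. First I would record the $L^2$ endpoint: since $m=\frac{n(1-\rho)}{2}\geq 0$, the symbol lies in $S^0_{\rho,\delta}$ with $0\leq\delta<\rho<1$, so the Calder\'on--Vaillancourt theorem gives that $A:L^2(\mathbb{R}^n)\to L^2(\mathbb{R}^n)$ is bounded. The heart of the matter is the pair of dual endpoint estimates $A:H^1(\mathbb{R}^n)\to L^1(\mathbb{R}^n)$ and $A:L^\infty(\mathbb{R}^n)\to BMO(\mathbb{R}^n)$ at the critical order $m=\frac{n(1-\rho)}{2}$; since $S^{-m}_{\rho,\delta}$ is stable under taking adjoints, by duality it suffices to establish the $H^1\to L^1$ bound. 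Once these endpoints are in hand, I would set up an analytic family $A_z$ of operators, with symbols $\sigma_z(x,\xi)=\sigma(x,\xi)\langle\xi\rangle^{\gamma z}$ for a suitable real constant $\gamma$ interpolating the order between $0$ and $\frac{n(1-\rho)}{2}$, and apply Stein's complex interpolation theorem between the $H^1\to L^1$ endpoint and the $L^2\to L^2$ endpoint. Writing $\frac1p=(1-\theta)+\frac{\theta}{2}$ one checks that the interpolated order is exactly $m_p=n(1-\rho)|\frac1p-\frac12|$, giving $L^p$ boundedness for $1<p\leq 2$, and the range $2\leq p<\infty$ follows by duality from the $L^\infty\to BMO$ endpoint. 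This yields part (b), and the $L^p$ assertion of part (a) is the special case in which the fixed order $\frac{n(1-\rho)}{2}$ exceeds the critical order $m_p$.

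For the decisive $H^1\to L^1$ estimate I would argue atom by atom. I would first decompose the symbol by a Littlewood--Paley partition in the frequency variable, $\sigma=\sum_{j\geq 0}\sigma_j$, with $\sigma_j$ localized to $|\xi|\approx 2^j$, and let $K_j(x,y)$ be the integral kernel of the piece $A_j$. The symbol estimates defining $S^{-m}_{\rho,\delta}$ translate, after integration by parts in $\xi$, into kernel bounds of the schematic form $|K_j(x,x-y)|\lesssim 2^{j(n-m)}(1+2^{j\rho}|y|)^{-N}$ for every $N$, together with comparable bounds for finitely many $x$-derivatives, where the strict inequality $\delta<\rho$ guarantees that differentiating in $x$ does not destroy these estimates. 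Given an $H^1$-atom $a$ supported in a ball $B=B(x_0,r)$ with $\|a\|_{L^\infty}\leq |B|^{-1}$ and $\int a=0$, I would split $\mathbb{R}^n$ into a dilated ball $B^*$ and its complement. On $B^*$, Cauchy--Schwarz together with the $L^2$ bound gives $\|Aa\|_{L^1(B^*)}\lesssim |B^*|^{1/2}\|a\|_{L^2}\lesssim 1$. On the complement I would exploit the cancellation, writing $Aa(x)=\int(K(x,y)-K(x,x_0))\,a(y)\,dy$ and summing the contributions of the dyadic pieces $K_j$, splitting the sum at the critical frequency $2^{-j\rho}\approx r$.

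The main obstacle is precisely this summation over dyadic scales, and the critical order $m=\frac{n(1-\rho)}{2}$ is exactly the threshold at which it converges: for frequencies below the critical scale the mean-zero cancellation supplies a gain of a power of $2^{j\rho}r$, while for frequencies above it the spatial decay of $K_j$ in the far region is used directly, and the two halves balance only when $m$ equals the critical value, any smaller value producing divergence. Carrying this out requires the uniform-in-$j$ kernel estimates above and careful bookkeeping of the interaction between the spatial scale $2^{-j\rho}$ of each kernel piece and the atom radius $r$. In the graded-group setting that is the true subject of this paper, the same architecture should persist, but the Euclidean frequency localization must be replaced by a spectral Littlewood--Paley decomposition adapted to a positive Rockland operator, the integration-by-parts kernel estimates by their operator-valued analogues on the group von Neumann algebra, and the homogeneous dimension must play the role of $n$; reproducing the sharp kernel bounds in that noncommutative Fourier framework is where I expect the real work to lie.
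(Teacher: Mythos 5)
Your global architecture (endpoint estimate at the critical order, $L^2$ endpoint from Calder\'on--Vaillancourt, duality between $H^1\to L^1$ and $L^\infty\to BMO$, then Fefferman--Stein analytic interpolation for part (b)) matches the paper's. The genuine gap is inside the decisive endpoint estimate: your far-field treatment of the atom cannot close as stated. First, a load-bearing bookkeeping error: differentiating $K_j(x,y)$ in $y$ brings down a full frequency factor $2^{j}$, not $2^{j\rho}$, so the mean-zero cancellation gains $2^{j}r$ per dyadic piece; with the correct factor the low-frequency sum forces the split at $2^{j}\approx r^{-1}$, not at $2^{-j\rho}\approx r$. Second, and more seriously, above the split the ``spatial decay of $K_j$ used directly'' fails: from $|K_j(x,x-y)|\lesssim 2^{j(n-m)}(1+2^{j\rho}|y|)^{-N}$ one gets $\int_{|x-x_0|\ge 2r}|K_j(x,y)|\,dx\lesssim 2^{j(n-m)}2^{-j\rho N}r^{\,n-N}$, and at $m=n(1-\rho)/2$ the sum over the high frequencies is dominated by the ($N$-independent) boundary term, of size $r^{\,n-(n-m)/\rho}=r^{-n(1-\rho)/(2\rho)}$, which blows up as $r\to 0$ for every $N$. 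No choice of splitting scale repairs this, because the full kernel genuinely fails the H\"ormander integral condition at the critical order: for the model multiplier $e^{i|\xi|^{1-\rho}}\langle\xi\rangle^{-m}$ the kernel behaves like $e^{ic|z|^{-(1-\rho)/\rho}}|z|^{-n(1+\rho)/(2\rho)}$ near $z=0$, and $\int_{|z|\ge 2|h|}|K(z-h)-K(z)|\,dz\approx |h|^{-(1-\rho)(n+2)/(2\rho)}$ diverges. For $\rho<1$ these are not Calder\'on--Zygmund operators, so no atom-plus-kernel-regularity argument of the kind you outline can succeed.

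The missing idea --- exactly what the paper supplies in Theorem \ref{LinftyBMOCardonaDelgadoRuzhansky} --- is to treat the high-frequency part by $L^2$ methods instead of pointwise kernel decay. The paper splits $\sigma=\sigma^0+\sigma^1$ at frequency $\approx r^{-1}$; for the low part it uses the mean value theorem together with the per-piece fact (Lemma \ref{FundamentallemmaI}) that at the critical order each dyadic piece has uniformly bounded convolution-kernel $L^1$-norm, Lemma \ref{FundamentallemmaII} supplying the geometric factors that make the dyadic sum converge. For the high part it writes $\frac{1}{|B|}\int_B|A^1f|\le \frac{1}{|B|}\int_B|A^1(\phi f)|+\frac{1}{|B|}\int_B|[M_\phi,A^1]f|$: the first term is handled by Cauchy--Schwarz plus Calder\'on--Vaillancourt applied to $A^1(1+\mathcal{R})^{\varepsilon/\nu}$ (Lemma \ref{LemmaJulio}), which is precisely where the critical order $\varepsilon=Q(1-\rho)/2$ is consumed, and the commutator is estimated at the symbol level: Taylor expansion of $\phi(x)-\phi(xy^{-1})$ converts it into difference operators acting on the symbol, each dyadic piece then gaining a summable factor. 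Thus the far-region cancellation is implemented through the commutator/difference-operator structure in frequency, not through kernel regularity in space; your proposal is missing this mechanism, and its replacement step would fail. (The remaining divergence from the paper --- you prove $H^1\to L^1$ first and dualize to $L^\infty\to BMO$, the paper does the reverse --- is immaterial once the endpoint is actually established.)
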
 
We should recall that the condition   
 $\delta<\rho$ in Fefferman's theorem can be improved allowing $\delta\leq \rho.$ This result is due to  C. Z. Li, and R. H. Wang,  \cite{RouhuaiChengzhang} (see also e.g. Miyachi \cite{Miyachi88} or L. Wang \cite[page 15]{LW}). As a consequence of the approach that we use in the proof of our main theorem (Theorem \ref{MainTheorem}), and considering that our methods are different to those employed by C. Z. Li, and R. H. Wang, we will present (in the case of $G=\mathbb{R}^n$ in Theorem \ref{MainTheorem}), an alternative proof for the extension of Fefferman's theorem for $\delta\leq \rho,$ up to by conditions of limited regularity. After presenting our main theorem,  we will return to this discussion.
The main point in Theorem \ref{FT} is the $L^\infty$-$BMO$ boundedness. From there, by using the classical Fefferman-Stein duality $(H^1)'=BMO,$ the real and the complex interpolation, we can deduce the other parts of the theorem. The Fefferman's proof of the $L^\infty$-$BMO$ estimate relies on the analysis of  pieces of the symbol arising from a partition of unity in the spirit of the Littlewood-Paley theory. Fefferman's Theorem \ref{FT} is sharp in the following sense. If $m<m_p,$  and 
\begin{equation}\label{oscillatory}
    \sigma_{\rho,m}(\xi)={e^{i(1+|\xi|^2)^{ \frac{1-\rho}{2} }}}{(1+|\xi|^2)^{-\frac{m}{2}}}\in S^{-m}_{\rho,0}(\mathbb{R}^n\times {\mathbb{R}}^n ),
\end{equation}then $A=\sigma_{\rho,m}(D),$ that is the Fourier multiplier with symbol $\sigma_{\rho,m},$ is unbounded on $L^p(\mathbb{R}^n)$ for all $1<p<\infty.$ For $p=1,$ and $m=m_1:=\frac{n(1-\rho)}{2},$  $\sigma_{\rho,m}(D)$ is unbounded from  $L^1(\mathbb{R}^n)$ into $L^1(\mathbb{R}^n).$ However, in view of  part \textnormal{(a)} of Theorem \ref{FT},  $\sigma_{\rho,m}(D)$ is bounded from $H^1(\mathbb{R}^n)$ to $L^1(\mathbb{R}^n)$ and from   $L^\infty(\mathbb{R}^n)$ to $ BMO(\mathbb{R}^n).$ This counterexample is due to Hardy-Littlewood, Hirschman and Wainger. Part \textnormal{(b)}  was proved in the noncritical case   by Hirschman and Wainger \cite{Hirschman1956,Wainger1965} (for Fourier multipliers) and by H\"ormander \cite{Hormander1967} for $m>m_{p}$. 

Due to the boundedness properties  of $\sigma_{\rho,m}$ in \eqref{oscillatory} on  $L^p(\mathbb{R}^n),$ the order $m_p$ is  the critical decay order for the $L^p$-boundedness of pseudo-differential operators in the H\"ormander classes. Similar  critical orders also appear in the $L^p$-boundedness of pseudo-differential operators associated with the Weyl-H\"ormander classes $S(m,g)$ on $\mathbb{R}^n$ and H\"ormander classes on compact Lie groups. We refer the reader to the works  to \cite{Delgado2006,Delgado2015} and \cite{DelgadoRuzhansky2019}  where Theorem \ref{FT} was extended for the  $S(m,g)$ classes and also for the global  classes on compact Lie groups, respectively.

Our main theorem is the following extension of Theorem \ref{FT} to an arbitrary graded Lie group $G$ of homogeneous dimension $Q$, where we found the critical order $m_p:=Q(1-\rho)\left|\frac{1}{p}-\frac{1}{2}\right|,$ for the $L^p$-boundedness of the  H\"ormander classes for the quantization process developed in \cite{FischerRuzhanskyBook2015}. 
\begin{theorem}\label{MainTheorem}
Let $G$ be a graded Lie group of homogeneous dimension $Q.$ Let $A:C^\infty(G)\rightarrow\mathscr{D}'(G)$ be a pseudo-differential operator with symbol $\sigma\in S^{-m}_{\rho,\delta}(G\times \widehat{G} ),$ $0\leq \delta\leq \rho\leq 1,$ $\delta\neq 1.$ Then,
\begin{itemize}
    \item{\textnormal{(a)}} if ${m=\frac{Q  (1-\rho)  }{2}},$  then $A$ extends to a bounded operator from $L^\infty(G)$ to $ BMO(G),$ from the Hardy space $H^1(G)$ to $L^1(G),$ and  from $L^p(G)$ to $L^p(G)$ for all $1< p<\infty.$ 
    \item{\textnormal{(b)}} If $m\geq m_{p}:= Q(1-\rho)\left|\frac{1}{p}-\frac{1}{2}\right|,$ $1<p<\infty,$ then $A$ extends to a bounded operator from $ L^p(G)$ into $ L^p(G).$  
\end{itemize}
\end{theorem}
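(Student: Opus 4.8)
The plan is to follow the route indicated just after Theorem~\ref{FT}: the whole statement reduces to the single endpoint estimate in part (a), the $L^\infty(G)\to BMO(G)$ boundedness at the critical order $m=\frac{Q(1-\rho)}{2}$. Granting it, the $H^1(G)\to L^1(G)$ bound follows from the Fefferman--Stein duality $(H^1(G))'=BMO(G)$ applied to the formal adjoint $A^{*}$, whose symbol again lies in $S^{-m}_{\rho,\delta}(G\times\widehat G)$ by the symbolic calculus of \cite{FischerRuzhanskyBook2015}; the remaining $L^p$-boundedness in part (a) for $1<p<\infty$ follows by interpolating these endpoint bounds with the $L^2$-boundedness of $A$; and part (b) follows by real and complex interpolation against $L^2$, tracking how $m_p=Q(1-\rho)|\frac1p-\frac12|$ interpolates between $0$ at $p=2$ and $\frac{Q(1-\rho)}{2}$ at the endpoints. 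These duality and interpolation steps are legitimate because $G$ with its homogeneous quasi-metric and Haar measure is a space of homogeneous type, on which the Folland--Stein Hardy and $BMO$ spaces and their interpolation theory are available.

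The first analytic ingredient I would establish is the $L^2(G)$-boundedness of $A$. Since $m=\frac{Q(1-\rho)}{2}\ge0$, the symbol $\sigma$ lies in $S^0_{\rho,\delta}(G\times\widehat G)$ with $0\le\delta\le\rho\le1$ and $\delta\ne1$, so the Calder\'on--Vaillancourt theorem on graded groups---in the form available within the calculus of \cite{FischerRuzhanskyBook2015}---gives $\|Af\|_{L^2(G)}\lesssim\|f\|_{L^2(G)}$. This $L^2$ bound is used both in the interpolation step above and in the local part of the $BMO$ estimate below.

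For the core estimate I would fix a positive Rockland operator $\mathcal R$ on $G$, homogeneous of degree $\nu$ with respect to the dilations, and use its functional calculus to build a dyadic Littlewood--Paley decomposition $I=\sum_{j\ge0}\psi_j(\mathcal R)$, where $\psi_j$ localises $\mathcal R$ to the spectral scale $2^{j\nu}$, i.e.\ to the spatial scale $2^{-j}$. Writing $A=\sum_{j}A_j$ with $A_j$ the operator whose symbol is $\sigma(x,\pi)\,\psi_j(\pi(\mathcal R))$, I would fix a quasi-ball $B=B(x_0,R)$ and estimate $\frac{1}{|B|}\int_B|Af-c_B|$ for $f\in L^\infty(G)$ and a suitable constant $c_B$, splitting $f=f\chi_{2B}+f\chi_{(2B)^c}$. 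The local part is controlled by Cauchy--Schwarz and the $L^2$-boundedness of $A$, while the far part is controlled through the right-convolution kernels $K_j$ of the $A_j$. The decisive input is a pair of kernel estimates: an $L^1$-normalised off-diagonal bound exhibiting that $K_j(x,\cdot)$ concentrates at the uncertainty scale $2^{-j\rho}$, and a companion H\"ormander-type regularity estimate yielding the cancellation needed to sum the high-frequency pieces $2^{-j}\le R$. These are obtained by transcribing the symbol bounds---expressed through the difference operators of \cite{FischerRuzhanskyBook2015}---into spatial decay via the group Taylor expansion and the homogeneity of $\mathcal R$; it is here that the homogeneous dimension $Q$ enters as the critical exponent, replacing the Euclidean dimension $n$ of Fefferman's theorem.

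The main obstacle I expect is precisely this transcription: on the noncommutative group the passage from operator-valued symbol estimates to $L^1$-normalised kernel decay is far more delicate than Euclidean integration by parts, since the relevant Fourier transform is the operator-valued group Fourier transform and the difference operators do not commute in the simple way $\partial_\xi$ does. One must combine the Plancherel measure on $\widehat G$, the spectral localisation $\psi_j(\pi(\mathcal R))$, and Sobolev--Nikolskii-type embeddings adapted to the graded structure in order to bound the kernels. At the critical order the naive $L^1$ bound on $K_j$ in fact grows like $2^{jQ(1-\rho)/2}$, so summability is recovered only by exploiting the $L^2$ cancellation and the $BMO$ oscillation simultaneously rather than termwise---the same balance that forces $BMO$, rather than $L^\infty$, to appear in the endpoint.
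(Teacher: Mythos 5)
Your outer architecture coincides with the paper's: reduction of everything to the $L^\infty$--$BMO$ endpoint, the $H^1$--$L^1$ bound by duality through the adjoint (legitimate since $\delta\le\rho$, $\delta\neq 1$ keeps $\sigma^*$ in the same class), Fefferman--Stein interpolation with the Calder\'on--Vaillancourt $L^2$ bound, and an analytic-family argument for part (b). But for the core estimate you take Fefferman's original Euclidean route: a fixed Littlewood--Paley decomposition $A=\sum_j A_j$, the spatial splitting $f=f\chi_{2B}+f\chi_{(2B)^c}$, the local part by Cauchy--Schwarz and $L^2$-boundedness, and the far part by $L^1$-normalised off-diagonal kernel decay at the uncertainty scale $2^{-j\rho}$ together with a H\"ormander-type kernel regularity condition. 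The paper's proof of Theorem \ref{LinftyBMOCardonaDelgadoRuzhansky} is genuinely different at this point: it splits the \emph{symbol} spectrally at the ball-dependent scale, $\sigma=\sigma^0+\sigma^1$ via $\gamma(\frac{r^{-\nu}}{\lambda^\nu_{\mathcal{R}}}\pi(\mathcal{R}))$; the low block is handled by the group mean-value theorem applied to $X_kA^0f$ together with the uniform $L^\infty$-bound for spectrally localised critical-order pieces (Lemma \ref{FundamentallemmaI}) and the seminorm gain $s^{\ell/\nu}$ (Lemma \ref{FundamentallemmaII}); the \emph{entire} high block is treated in one stroke by writing $A^1[\phi f]=(A^1L)\,L^{-1}(\phi f)$ with $L=(1+\mathcal{R})^{\frac{\varepsilon}{\nu}}$ and invoking Calder\'on--Vaillancourt locally (Lemma \ref{LemmaJulio}), the interaction with the far part of $f$ being absorbed into the commutator $[M_\phi,A^1]$ expanded by Taylor's formula. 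Thus the paper never proves, nor needs, a pointwise H\"ormander kernel condition; all kernel information enters through $L^1$-bounds obtained from Plancherel, Cauchy--Schwarz at scale $2^{-j\rho}$, and difference operators.

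The gap in your route is concrete and sits exactly at the heart of the theorem. At the critical order the localised pieces satisfy $\Vert A_j\Vert_{\mathscr{B}(L^\infty(G))}=O(1)$ \emph{uniformly} in $j$ --- this is precisely Lemma \ref{FundamentallemmaI}, so your assertion that the naive $L^1$ kernel bound grows like $2^{jQ(1-\rho)/2}$ is inaccurate --- and consequently the far-part sum cannot be closed termwise. Your two kernel estimates cover only the extreme ranges: regularity yields a gain of the form $\min(1,2^{j}R)$, useful when $2^{-j}\gtrsim R$, while the tail bound yields $(2^{j\rho}R)^{-N}$, useful when $2^{-j\rho}\lesssim R$. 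Since $\rho<1$ there is a nonempty intermediate band $2^{-j}\lesssim R\lesssim 2^{-j\rho}$ in which, with a single spatial cutoff at $2B$, neither estimate decays; Fefferman's Euclidean proof closes this band by local $L^2$ estimates on \emph{enlarged} balls of the uncertainty radius $2^{-j\rho}$ combined with almost-orthogonality of the spectral pieces, and your final paragraph flags the issue (``exploiting the $L^2$ cancellation and the $BMO$ oscillation simultaneously'') without supplying this device, whereas the paper's symbol-splitting avoids the band altogether. A second, smaller gap: in part (b), interpolating $A$ itself against its $L^2$ bound cannot reach $m_p<\frac{Q(1-\rho)}{2}$, because such an $A$ is not $L^\infty$--$BMO$ bounded; one must interpolate an analytic family of operators of $z$-dependent order, as in the proof of Theorem \ref{partb}, namely $T_z=\textnormal{Op}\bigl(e^{z^2}\sigma(x,\pi)(1+\pi(\mathcal{R}))^{\frac{m+\frac{Q(1-\rho)}{2}(z-1)}{\nu}}\bigr)$, with $T_1$ bounded on $L^2(G)$ and $T_0$ bounded from $L^\infty(G)$ to $BMO(G)$; your phrase ``tracking how $m_p$ interpolates'' presupposes exactly this construction and should be made explicit.
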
 Now, we briefly discuss our main result. 
\begin{itemize}
    \item In the case of $G=\mathbb{R}^n,$ Theorem  \ref{MainTheorem}  is precisely the sharp Fefferman theorem (Theorem \ref{FT}). In particular, for the $L^\infty$-$BMO$ boundedness of operators with order $-Q(1-\rho)/2,$ we extend Feferman's theorem imposing the condition $0\leq \delta\leq \rho\leq 1,$ $\delta\neq 1,$ allowing in this case $\rho=\delta,$ on general graded Lie groups.
    \item On $\mathbb{R}^n,$ that the condition $\delta< \rho,$ can be relaxed to $\delta\leq \rho,$ was first observed by   C. Z. Li, and R. H. Wang, \cite{RouhuaiChengzhang}. Because in our proof on general graded Lie groups we use a different approach to that used in \cite{RouhuaiChengzhang}, our analysis in particular gives a new proof of this fact, up to by conditions of limited regularity (see Remark \ref{remarkestimate}).  
    \item In  the critical case, $\rho=\delta=0,$ Theorem \ref{MainTheorem} provides the $L^\infty(G)$-$BMO(G),$ the $H^1(G)$-$L^1(G),$ and the $L^p(G)$-$L^p(G)$-boundedness for operators associated to the class $ S^{-\frac{Q}{2}}_{0,0}(G\times \widehat{G} ).$  For $G=\mathbb{R}^n,$  this  result  is due to Coiffman and Meyer \cite{CoifmanMeyer78}. With the notation of Remark \ref{remarkoffinite}, for the case  $\rho=\delta=0,$   we impose  difference conditions up to order  $ rp_0,$ and derivatives in the spatial variable up to order $ r\nu+[\frac{Q}{2}],$  where $\nu$ is the degree of homogeneity of the Rockland operator $\mathcal{R}$ fixed in \eqref{seminorm},  $p_0/2$ is the smallest positive integer divisible by the weights $\nu_1,\nu_2,\cdots ,\nu_n,$ (see Section \ref{Preliminaries}) and $r\in\mathbb{N}_0$ is the smallest integer such that $rp_0>Q+1.$
    \item If $p=2,$ Part (b) of Theorem \ref{MainTheorem}, provides the $L^2(G)$-boundedness for pseudo-differential operators associated to the class $S^{0}_{\rho,\rho}(G\times \widehat{G} ),$ $0\leq \rho<1.$ So, we re-obtain the Calder\'on-Vaillacourt theorem  proved in  the graded setting   in   Proposition 5.7.14 of \cite{FischerRuzhanskyBook2015}. For $G=\mathbb{R}^n,$ it is well know that the Calder\'on-Vaillancourt theorem \cite{CalderonVaillancourt71} is sharp, indeed, for every $1<p<\infty,$ there exists an operator with symbol in   $S^{0}_{1,1}(\mathbb{R}^n\times \mathbb{R}^n ),$ which extends to an unbounded operator on $L^p(\mathbb{R}^n),$ (see e.g. Wang \cite[page 14]{LW} and Taylor \cite{Taylorbook1981}).
    \item Although, some  operators in the class  $S^{0}_{1,1}(\mathbb{R}^n\times \mathbb{R}^n ),$ are unbounded on $L^p(G),$ it is well known that the class $ S^{0}_{1,\delta}(\mathbb{R}^n\times \mathbb{R}^n ),$ $0\leq \delta<1,$ provides operators  admitting a  bounded extension  on $L^p(\mathbb{R}^n),$ for all $1<p<\infty,$ (see e.g. Taylor \cite{Taylorbook1981}).  If in Theorem \ref{oscillatory} we consider $\rho=1,$ and $0\leq \delta<1,$ we observe that the class  $ S^{0}_{1,\delta}(G\times \widehat{G} ),$ $0\leq \delta<1,$ begets operators  admitting   bounded extensions  on $L^p(G),$ for all $1<p<\infty.$ To do this, we will use the $L^2$-boundedeness of pseudo-differential operators in the class $S^0_{1,\delta}(G\times \widehat{G})$ proved in \cite{FischerRuzhanskyBook2015} (see Theorem \ref{1delta}). A more general condition (that indeed, is an extension of the H\"ormander-Mihlin condition on $\mathbb{R}^n$) for the $L^p$-boundedness and the weak type (1,1) of Fourier multipliers on graded Lie groups has been established in \cite{FischerRuzhansky2014}. 
    \item For $G=\mathbb{H}^n,$  that is, the Heisenberg group,   the H\"ormander classes on the Heisenberg group $S^{m}_{\rho,\delta}(\mathbb{H}^n\times \widehat{\mathbb{H}}^n )$  and the Shubin calculus are related (see \cite[Chapter 6]{FischerRuzhanskyBook2015}). We will discuss this connection in Remark \ref{MainRemark} and the analogy of Theorem \ref{MainTheorem} in terms of the Shubin classes will be presented in Corollary \ref{MainCorollary}. 
    \item Although we present our results in terms of smooth symbols, we only need finite  regularity in $x,$ and also a finite number of difference conditions for the Fourier variables in the unitary  dual $ \widehat{G},$ (see e.g. Eq.  \eqref{LBMO} and \eqref{H1L1}).
\end{itemize}
In the setting of graded Lie groups, the $L^p$-boundedness of the global pseudo-differential calculus for the classes $S^m_{\rho,\delta}(G\times \widehat{G})$ have been investigated by the third author and V. Fischer with the    Calder\'on-Vaillancourt theorem in \cite{FischerRuzhanskyBook2015} regarding  the $L^2(G)$-boundedness for operators with symbol in the class $S^0_{\rho,\rho}(G\times \widehat{G}),$ and  the $L^p$-boundedness of operators in the class  $S^0_{1,0}(G\times \widehat{G}),$ (see \cite[Corollary 5.7.4]{FischerRuzhanskyBook2015}) which corresponds with the case $\rho=1$ in Theorem \ref{MainTheorem}. The H\"ormander-Mihlin theorem  for Fourier multipliers  on graded Lie groups has been established also in  \cite{FischerRuzhansky2014}. For the case of compact Lie groups, an analogy of Theorem \ref{MainTheorem} has been established for $(\rho,\delta)$-H\"ormander classes by the second and third author in \cite{DelgadoRuzhansky2019} while the H\"ormander-Mihlin condition for Fourier multipliers was established by the third author and J. Wirth in \cite{RuzhanskyWirth2015}. The H\"ormander-Mihlin theorem for spectral multipliers on  Lie groups of polynomial growth appears e.g. in the work of Alexopoulos \cite{Alexopoulos1994}, and other conditions in the nilpotent setting for spectral multipliers of the sub-Laplacian or Rockland operators in the graded setting can be found in  Christ \cite{Christ91}, Christ and M\"uller \cite{ChristMuller}, De Michele and  Mauceri \cite{DeMicheleMauceri,DeMicheleMauceri2}, Martini \cite{Martini} and Martini and M\"uller \cite{MartiniThesis,MartiniMuller}, where many of these works  can be considered as non-commutative extensions of the classical $L^p$-H\"ormander-Mihlin-Marcinkiewicz theorems (see e.g. H\"ormander \cite{Hormander1960}, Marcinkiewicz \cite{Marcinkiewicz1939} and Mihlin \cite{Mihlin1956} and the recent revision on the subject by  Grafakos and Slav\'ikov\'a \cite{GrafakosSlavikova2019I,GrafakosSlavikova2019II}). 

We also state the corresponding $L^p$-Sobolev and Besov bounds that can be deduced from Theorem \ref{MainTheorem} (see Theorem \ref{Sobolevtheorem}) and we also study the boundedness of local versions of global H\"ormander classes, on local Sobolev spaces on the group. We also compare the boundededness on local Sobolev spaces on $\mathbb{R}^n,$ obtaining both situations, loss of regularity and gain of regularity (see Remark \ref{losswin}). 

This paper is organized as follows. In Section \ref{Preliminaries} we will present some preliminaries on homogeneous Lie groups and the quantization process of pseudo-differential operators developed in \cite{FischerRuzhanskyBook2015}. Finally, in Section \ref{sec3}, we prove our main theorem and some of its consequences for the boundedness of operators on Sobolev and Besov spaces. 

\section{Preliminaries and global  quantization  on graded Lie groups}\label{Preliminaries}
 The notation and terminology of this paper on the analysis of homogeneous Lie groups are mostly taken 
from Folland and Stein \cite{FollandStein1982}. For the theory of pseudo-differential operators we will follow the setting developed in \cite{FischerRuzhanskyBook2015} through  the notion of (operator-valued) global symbols. If $E,F$ are Hilbert spaces,  $\mathscr{B}(E,F)$ denotes the algebra of bounded linear operators from $E$ to $F,$ and also we will write $\mathscr{B}(E)=\mathscr{B}(E,E).$
    \subsection{Homogeneous and graded Lie groups} 
    Let $G$ be a homogeneous Lie group. This means that $G$ is a connected and simply connected Lie group whose Lie algebra $\mathfrak{g}$ is endowed with a family of dilations $D_{r}^{\mathfrak{g}},$ $r>0,$ which are automorphisms on $\mathfrak{g}$  satisfying the following two conditions:
\begin{itemize}
\item For every $r>0,$ $D_{r}^{\mathfrak{g}}$ is a map of the form
$$ D_{r}^{\mathfrak{g}}=\textnormal{Exp}(\ln(r)A) $$
for some diagonalisable linear operator $A\equiv \textnormal{diag}[\nu_1,\cdots,\nu_n]$ on $\mathfrak{g}.$
\item $\forall X,Y\in \mathfrak{g}, $ and $r>0,$ $[D_{r}^{\mathfrak{g}}X, D_{r}^{\mathfrak{g}}Y]=D_{r}^{\mathfrak{g}}[X,Y].$ 
\end{itemize}
We call  the eigenvalues of $A,$ $\nu_1,\nu_2,\cdots,\nu_n,$ the dilations weights or weights of $G$.  The homogeneous dimension of a homogeneous Lie group $G$ is given by  $$ Q=\textnormal{\textbf{Tr}}(A)=\nu_1+\cdots+\nu_n.  $$
The dilations $D_{r}^{\mathfrak{g}}$ of the Lie algebra $\mathfrak{g}$ induce a family of  maps on $G$ defined via,
$$ D_{r}:=\exp_{G}\circ D_{r}^{\mathfrak{g}} \circ \exp_{G}^{-1},\,\, r>0, $$
where $\exp_{G}:\mathfrak{g}\rightarrow G$ is the usual exponential mapping associated to the Lie group $G.$ We refer to the family $D_{r},$ $r>0,$ as dilations on the group. If we write $rx=D_{r}(x),$ $x\in G,$ $r>0,$ then a relation on the homogeneous structure of $G$ and the Haar measure $dx$ on $G$ is given by $$ \int\limits_{G}(f\circ D_{r})(x)dx=r^{-Q}\int\limits_{G}f(x)dx. $$
    
A  Lie group is graded if its Lie algebra $\mathfrak{g}$ may be decomposed as the sum of subspaces $\mathfrak{g}=\mathfrak{g}_{1}\oplus\mathfrak{g}_{2}\oplus \cdots \oplus \mathfrak{g}_{s}$ such that $[\mathfrak{g}_{i},\mathfrak{g}_{j} ]\subset \mathfrak{g}_{i+j},$ and $ \mathfrak{g}_{i+j}=\{0\}$ if $i+j>s.$  Examples of such groups are the Heisenberg group $\mathbb{H}^n$ and more generally any stratified groups where the Lie algebra $ \mathfrak{g}$ is generated by $\mathfrak{g}_{1}$.  Here, $n$ is the topological dimension of $G,$ $n=n_{1}+\cdots +n_{s},$ where $n_{k}=\mbox{dim}\mathfrak{g}_{k}.$

A Lie algebra admitting a family of dilations is nilpotent, and hence so is its associated
connected, simply connected Lie group. The converse does not hold, i.e., not every
nilpotent Lie group is homogeneous (see Dyer \cite{Dyer1970}) although they exhaust a large class (see Johnson \cite[page 294]{Johnson1975}). Indeed, the main class of Lie groups under our consideration is that of graded Lie groups. A graded Lie group $G$ is a homogeneous Lie group equipped with a family of weights $\nu_j,$ all of them positive rational numbers. Let us observe that if $\nu_{i}=\frac{a_i}{b_i}$ with $a_i,b_i$ integer numbers,  and $b$ is the least common multiple of the $b_i's,$ the family of dilations 
$$ \mathbb{D}_{r}^{\mathfrak{g}}=\textnormal{Exp}(\ln(r^b)A):\mathfrak{g}\rightarrow\mathfrak{g}, $$
have integer weights,  $\nu_{i}=\frac{a_i b}{b_i}. $ So, in this paper we always assume that the weights $\nu_j,$ defining the family of dilations are non-negative integer numbers which allow us to assume that the homogeneous dimension $Q$ is a non-negative integer number. This is a natural context for the study of Rockland operators (see Remark 4.1.4 of \cite{FischerRuzhanskyBook2015}).

\subsection{Fourier analysis on nilpotent Lie groups}

Let $G$ be a simply connected nilpotent Lie group.  
Let us assume that $\pi$ is a continuous, unitary and irreducible  representation of $G,$ this means that,
\begin{itemize}
    \item $\pi\in \textnormal{Hom}(G, \textnormal{U}(H_{\pi})),$ for some separable Hilbert space $H_\pi,$ i.e. $\pi(xy)=\pi(x)\pi(y)$ and for the  adjoint of $\pi(x),$ $\pi(x)^*=\pi(x^{-1}),$ for every $x,y\in G.$
    \item The map $(x,v)\mapsto \pi(x)v, $ from $G\times H_\pi$ into $H_\pi$ is continuous.
    \item For every $x\in G,$ and $W_\pi\subset H_\pi,$ if $\pi(x)W_{\pi}\subset W_{\pi},$ then $W_\pi=H_\pi$ or $W_\pi=\emptyset.$
\end{itemize} Let $\textnormal{Rep}(G)$ be the set of unitary, continuous and irreducible representations of $G.$ The relation, {\small{
\begin{equation*}
    \pi_1\sim \pi_2\textnormal{ if and only if, there exists } A\in \mathscr{B}(H_{\pi_1},H_{\pi_2}),\textnormal{ such that }A\pi_{1}(x)A^{-1}=\pi_2(x), 
\end{equation*}}}for every $x\in G,$ is an equivalence relation and the unitary dual of $G,$ denoted by $\widehat{G}$ is defined via
$
    \widehat{G}:={\textnormal{Rep}(G)}/{\sim}.
$ Let us denote by $d\pi$ the Plancherel measure on $\widehat{G}.$ 
The Fourier transform of $f\in \mathscr{S}(G), $ (this means that $f\circ \textnormal{exp}_G\in \mathscr{S}(\mathfrak{g})$, with $\mathfrak{g}\simeq \mathbb{R}^{\dim(G)}$) at $\pi\in\widehat{G},$ is defined by 
\begin{equation*}
    \widehat{f}(\pi)=\int\limits_{G}f(x)\pi(x)^*dx:H_\pi\rightarrow H_\pi,\textnormal{   and   }\mathscr{F}_{G}:\mathscr{S}(G)\rightarrow \mathscr{S}(\widehat{G}):=\mathscr{F}_{G}(\mathscr{S}(G)).
\end{equation*}

If we identify one representation $\pi$ with its equivalence class, $[\pi]=\{\pi':\pi\sim \pi'\}$,  for every $\pi\in \widehat{G}, $ the Kirillov trace character $\Theta_\pi$ defined by  $$(\Theta_{\pi},f):
=\textnormal{\textbf{Tr}}(\widehat{f}(\pi)),$$ is a tempered distribution on $\mathscr{S}(G).$ In particular, the identity
$
    f(e_G)=\int\limits_{\widehat{G}}(\Theta_{\pi},f)d\pi,
$ 
implies the Fourier inversion formula $f=\mathscr{F}_G^{-1}(\widehat{f}),$ where
\begin{equation*}
    (\mathscr{F}_G^{-1}\sigma)(x):=\int\limits_{\widehat{G}}\textnormal{\textbf{Tr}}(\pi(x)\sigma(\pi))d\pi,\,\,x\in G,\,\,\,\,\mathscr{F}_G^{-1}:\mathscr{S}(\widehat{G})\rightarrow\mathscr{S}(G),
\end{equation*}is the inverse Fourier  transform. In this context, the Plancherel theorem takes the form $\Vert f\Vert_{L^2(G)}=\Vert \widehat{f}\Vert_{L^2(\widehat{G})}$,  where  $$L^2(\widehat{G}):=\int\limits_{\widehat{G}}H_\pi\otimes H_{\pi}^*d\pi,$$ is the Hilbert space endowed with the norm: $\Vert \sigma\Vert_{L^2(\widehat{G})}=(\int_{\widehat{G}}\Vert \sigma(\pi)\Vert_{\textnormal{HS}}^2d\pi)^{\frac{1}{2}}.$
\subsection{The spaces $H^1$ and $BMO$ on homogeneous groups}
We will fix a homogeneous quasi-norm on $G,$ $|\cdot|.$ This means that $|\cdot|$ is a non-negative function on $G,$ satisfying 
\begin{equation*}
    |x|=|x^{-1}|,\,\,\,r|x|=|D_r( x)|,\,\,\,\textnormal{ and }|x|=0 \textnormal{ if and only if  }x=e_{G},
\end{equation*}
where $e_{G}$ is the identity element of $G.$ It satisfies a triangle
inequality with a constant: there exists a constant $\gamma\geq 1$ such that $|xy|\leq \gamma(|x|+|y|).$ As usual, the ball of radius $r>0,$ is defined as 
\begin{equation*}
    B(x,r)=\{y\in G:|y^{-1}x|<r\}.
\end{equation*}Then $BMO(G)$ is the space of locally integrable functions $f$ satisfying
\begin{equation*}
    \Vert f\Vert_{BMO(G)}:=\sup_{\mathbb{B}}\frac{1}{|\mathbb{B}|}\int\limits_{\mathbb{B}}|f(x)-f_{\mathbb{B}}|dx<\infty,\textnormal{ where  } f_{\mathbb{B}}:=\frac{1}{|\mathbb{B}|}\int\limits_{\mathbb{B}}f(x)dx,
\end{equation*}
and $\mathbb{B}$ ranges over all balls $B(x_{0},r),$ with $(x_0,r)\in G\times (0,\infty).$ The Hardy space $H^1(G)$ will be defined via the atomic decomposition. Indeed, $f\in H^1(G),$ if and only if, $f$ can be expressed as $f=\sum_{j=1}^\infty c_{j}a_{j},$ where $\{c_j\}_{j=1}^\infty$ is a sequence in $\ell^1(\mathbb{N}_0),$ and every function $a_j$ is an atom, i.e., $a_j$ is supported in some ball $B_j,$ $\int_{B_j}a_{j}(x)dx=0,$ and 
\begin{equation*}
    \Vert a_j\Vert_{L^\infty(G)}\leq \frac{1}{|B_j|}.
\end{equation*} The norm $\Vert f\Vert_{H^1(G)}$ is the infimum over  all possible series $\sum_{j=1}^\infty|c_j|.$ Furthermore $BMO(G)$ is the dual of $H^1(G),$ (see Folland and Stein \cite{FollandStein1982}). This can be understood in the following sense:
\begin{itemize}
    \item[(a).] If $\phi\in BMO(G), $ then $\Phi: f\mapsto \int\limits_{G}f(x)\phi(x)dx,$ admits a bounded extension on $H^1(G).$
    \item[(b).] Conversely, every continuous linear functional $\Phi$ on $H^1(G)$ arises as in $\textnormal{(a)}$ with a unique element $\phi\in BMO(G).$
\end{itemize} The norm of $\phi$ as a linear functional on $H^1(G)$ is equivalent with the $BMO(G)$-norm. Important properties of the $BMO(G)$ and the $H^1(G)$ norms are the following,
\begin{equation}\label{BMOnormduality}
 \Vert f \Vert_{BMO(G)}  =\sup_{\Vert g\Vert_{H^1(G)}=1} 
\left| \int\limits_{G}f(x)g(x)dx\right|,\end{equation}
\begin{equation}\label{BMOnormduality'}\Vert g \Vert_{H^1(G)}  =\sup_{\Vert f\Vert_{BMO(G)}=1} 
\left| \int\limits_{G}f(x)g(x)dx\right|.
\end{equation} 
The identities  \eqref{BMOnormduality} and \eqref{BMOnormduality'}, will be important in the duality argument at the end of the proof of Theorem \ref{LinftyBMOCardonaDelgadoRuzhansky}.

\subsection{Homogeneous linear operators and Rockland operators} A linear operator $T:C^\infty(G)\rightarrow \mathscr{D}'(G)$ is homogeneous of  degree $\nu\in \mathbb{C}$ if for every $r>0$ the equality 
\begin{equation*}
T(f\circ D_{r})=r^{\nu}(Tf)\circ D_{r}
\end{equation*}
holds for every $f\in \mathscr{D}(G). $
If for every representation $\pi\in\widehat{G},$ $\pi:G\rightarrow U({H}_{\pi}),$ we denote by ${H}_{\pi}^{\infty}$ the set of smooth vectors, that is, the space of elements $v\in {H}_{\pi}$ such that the function $x\mapsto \pi(x)v,$ $x\in \widehat{G}$ is smooth,  a Rockland operator is a left-invariant differential operator $\mathcal{R}$ which is homogeneous of positive degree $\nu=\nu_{\mathcal{R}}$ and such that, for every unitary irreducible non-trivial representation $\pi\in \widehat{G},$ $\pi({R})$ is injective on ${H}_{\pi}^{\infty};$ $\sigma_{\mathcal{R}}(\pi)=\pi(\mathcal{R})$ is the symbol associated to $\mathcal{R}.$ It coincides with the infinitesimal representation of $\mathcal{R}$ as an element of the universal enveloping algebra. It can be shown that a Lie group $G$ is graded if and only if there exists a differential Rockland operator on $G.$ If the Rockland operator is formally self-adjoint, then $\mathcal{R}$ and $\pi(\mathcal{R})$ admit self-adjoint extensions on $L^{2}(G)$ and ${H}_{\pi},$ respectively. Now if we preserve the same notation for their self-adjoint
extensions and we denote by $E$ and $E_{\pi}$  their spectral measures, we will denote by
$$ f(\mathcal{R})=\int\limits_{-\infty}^{\infty}f(\lambda) dE(\lambda),\,\,\,\textnormal{and}\,\,\,\pi(f(\mathcal{R}))\equiv f(\pi(\mathcal{R}))=\int\limits_{-\infty}^{\infty}f(\lambda) dE_{\pi}(\lambda), $$ the functions defined by the functional calculus. 
In general, we will reserve the notation ${dE_A(\lambda)}_{0\leq\lambda<\infty}$ for the spectral measure associated with a positive and self-adjoint operator $A$ on a Hilbert space $H.$ 

We now recall a lemma on dilations on the unitary dual $\widehat{G},$ which will be useful in our analysis of spectral multipliers.   For the proof, see Lemma 4.3 of \cite{FischerRuzhanskyBook2015}.
\begin{lemma}\label{dilationsrepre}
For every $\pi\in \widehat{G}$ let us define  $D_{r}(\pi)(x)=\pi(rx)$ for every $r>0$ and $x\in G.$ Then, if $f\in L^{\infty}(\mathbb{R})$ then $f(\pi^{(r)}(\mathcal{R}))=f({r^{\nu}\pi(\mathcal{R})}).$
\end{lemma}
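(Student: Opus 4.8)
The plan is to reduce the functional-calculus identity to a single algebraic identity at the level of the (unbounded, self-adjoint) operators, namely
\begin{equation*}
\pi^{(r)}(\mathcal{R})=r^{\nu}\,\pi(\mathcal{R}),\qquad r>0,
\end{equation*}
after which the claim for arbitrary $f\in L^{\infty}(\mathbb{R})$ follows from the behaviour of the spectral measure under multiplication by the positive scalar $r^{\nu}$. First I would record that $\pi^{(r)}:=D_{r}(\pi)$ is again a unitary, continuous, irreducible representation of $G$: since $D_{r}=\exp_{G}\circ D_{r}^{\mathfrak{g}}\circ\exp_{G}^{-1}$ is an automorphism of $G$, one has $\pi^{(r)}(xy)=\pi(D_{r}(xy))=\pi(D_{r}x)\pi(D_{r}y)=\pi^{(r)}(x)\pi^{(r)}(y)$, and unitarity and irreducibility are inherited directly. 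In particular the infinitesimal representation $\pi^{(r)}(\mathcal{R})$ is well defined on the smooth vectors $H_{\pi}^{\infty}$, and so is $\pi(\mathcal{R})$.

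The heart of the argument is to compute $\pi^{(r)}$ on the Lie algebra. For $X\in\mathfrak{g}$ and $v\in H_{\pi}^{\infty}$, using $D_{r}\exp_{G}(tX)=\exp_{G}(t\,D_{r}^{\mathfrak{g}}X)$ (immediate from the definition of $D_{r}$ and $D_{r}^{\mathfrak{g}}=\textnormal{Exp}(\ln(r)A)$), I would differentiate at $t=0$ to obtain
\begin{equation*}
\pi^{(r)}(X)v=\frac{d}{dt}\Big|_{t=0}\pi\bigl(\exp_{G}(t\,D_{r}^{\mathfrak{g}}X)\bigr)v=\pi\bigl(D_{r}^{\mathfrak{g}}X\bigr)v.
\end{equation*}
Hence for $X$ in the eigenspace $\mathfrak{g}_{j}$ of $A$ with weight $\nu_{j}$ one gets $\pi^{(r)}(X)=r^{\nu_{j}}\pi(X)$. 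Passing to the universal enveloping algebra, $\pi^{(r)}$ acts on a monomial $X_{i_{1}}\cdots X_{i_{k}}$ by the product of the scalars $r^{\nu_{i_{1}}}\cdots r^{\nu_{i_{k}}}=r^{\nu_{i_{1}}+\cdots+\nu_{i_{k}}}$. Because $\mathcal{R}$ is homogeneous of degree $\nu$, every monomial appearing in its expansion in a basis adapted to the grading has total weight exactly $\nu$, so each term is scaled by $r^{\nu}$ and therefore $\pi^{(r)}(\mathcal{R})=r^{\nu}\pi(\mathcal{R})$ as operators on $H_{\pi}^{\infty}$.

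It remains to promote this to the functional calculus. Both sides are essentially self-adjoint on $H_{\pi}^{\infty}$ (the Rockland property guarantees that $\pi(\mathcal{R})$ admits a self-adjoint, positive extension), and since $r^{\nu}>0$ the self-adjoint extension of $r^{\nu}\pi(\mathcal{R})$ is exactly $r^{\nu}$ times that of $\pi(\mathcal{R})$. Consequently their spectral measures are related by the pushforward under $\lambda\mapsto r^{\nu}\lambda$, so that for every bounded Borel function $f$,
\begin{equation*}
f\bigl(\pi^{(r)}(\mathcal{R})\bigr)=f\bigl(r^{\nu}\pi(\mathcal{R})\bigr)=\int_{-\infty}^{\infty}f(r^{\nu}\lambda)\,dE_{\pi}(\lambda),
\end{equation*}
which is precisely the asserted identity. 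The only delicate point—and the step I would treat most carefully—is the functional-analytic bookkeeping of unbounded operators: verifying that the algebraic identity on the common core $H_{\pi}^{\infty}$ genuinely forces the self-adjoint extensions (and hence the spectral measures) to agree. This is where one invokes essential self-adjointness of $\pi(\mathcal{R})$ on smooth vectors; everything else is the elementary homogeneity computation above.
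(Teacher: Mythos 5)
Your proof is correct, and it follows essentially the same route as the source: the paper itself gives no argument for this lemma, deferring to Lemma 4.3 of \cite{FischerRuzhanskyBook2015}, where the proof is exactly your two-step scheme --- the homogeneity computation $\pi^{(r)}(X)=r^{\nu_j}\pi(X)$ for $X\in\mathfrak{g}_j$, extended through the universal enveloping algebra to give $\pi^{(r)}(\mathcal{R})=r^{\nu}\pi(\mathcal{R})$ on $H_{\pi}^{\infty}$, followed by essential self-adjointness on the common core (the paper's Remark \ref{symbolremark} records the needed facts via Proposition 4.1.5 and Corollary 4.1.16 of \cite{FischerRuzhanskyBook2015}) to transfer the identity to the spectral calculus. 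You correctly isolate the one delicate point, namely that agreement on the core $H_{\pi}^{\infty}=H_{\pi^{(r)}}^{\infty}$ forces the self-adjoint extensions, and hence the spectral measures and $f(\pi^{(r)}(\mathcal{R}))=f(r^{\nu}\pi(\mathcal{R}))$ for all bounded Borel $f$, to coincide.
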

 {{For instance, for any $\alpha\in \mathbb{N}_0^n,$ and for an arbitrary family $X_1,\cdots, X_n,$ of left-invariant  vector-fields we will use the notation
\begin{equation}
    [\alpha]:=\sum_{j=1}^n\nu_j\alpha_j,
\end{equation}for the homogeneity degree of the operator $X^{\alpha}:=X_1^{\alpha_1}\cdots X_{n}^{\alpha_n},$ whose order is $|\alpha|:=\sum_{j=1}^n\alpha_j.$}}

\subsection{Symbols and quantization of pseudo-differential operators}
 In order to present a consistent definition of pseudo-differential operators one developed in \cite{FischerRuzhanskyBook2015} (see the quantisation formula  \eqref{Quantization}),  a suitable class of spaces on the unitary dual $\widehat{G}$ acting in a suitable way with the set of smooth vectors $H_{\pi}^{\infty},$ on every representation space $H_{\pi}.$ Let now recall the main notions.
 \begin{definition}[Sobolev spaces on  smooth vectors] Let $\pi_1\in \textnormal{Rep}(G),$ and $a\in \mathbb{R}.$ We denote by $H_{\pi_1}^a,$ the Hilbert space obtained  as the completion of $H_{\pi_1}^\infty$ with respect to the norm
 \begin{equation*}
     \Vert v \Vert_{H_{\pi_1}^a}=\Vert\pi_1(1+\mathcal{R})^{\frac{a}{\nu}} v\Vert_{H_{\pi_1}},
 \end{equation*}
 where $\mathcal{R}$ is  a positive Rockland operator on $G$ of homogeneous degree $\nu>0.$
 \end{definition}
 
 In order to introduce the general notion of symbol as the one developed in \cite{FischerRuzhanskyBook2015}, we will use a suitable notion of operator-valued  symbols acting on smooth vectors. We introduce it as follows.
 
\begin{definition}
A $\widehat{G}$-field of operators $\sigma=\{\sigma(\pi):\pi\in \widehat{G}\}$ defined on smooth vectors {is defined} on the Sobolev space ${H}_\pi^a$ when for each representation $\pi_1\in \textnormal{Rep}(G),$ the operator $\sigma(\pi_1)$ is bounded from $H^a_{\pi_1}$ into $H_{\pi_{1}}$ in the sense that
\begin{equation*}
    \sup_{ \Vert v\Vert_{H_{\pi_1}^a}=1}\Vert \sigma(\pi_1)v \Vert<\infty.
\end{equation*}
\end{definition}

We will consider those $\widehat{G}-$fields of operators with ranges in Sobolev spaces on smooth vectors. This is useful, for example,  in the generalization of the Calder\'on-Vaillancourt theorem and in establishing that the class of pseudo-differential operators introduced in Section \ref{sec3}  have Calder\'on-Zygmund kernels (see \cite{FischerRuzhanskyBook2015} for instance). We recall that the  Sobolev space $L^{2}_{a}(G)$ is  defined by the norm (see \cite[Chapter 4]{FischerRuzhanskyBook2015})
\begin{equation}\label{L2ab2}
    \Vert f \Vert_{L^{2}_{a}(G)}=\Vert (1+\mathcal{R})^{\frac{a}{\nu}}f\Vert_{L^2(G)},
\end{equation} for $s\in \mathbb{R}.$

\begin{definition}
A $\widehat{G}$-field of operators  defined on smooth vectors {with range} in the Sobolev space $H_{\pi}^a$ is a family of classes of operators $\sigma=\{\sigma(\pi):\pi\in \widehat{G}\}$ where
\begin{equation*}
    \sigma(\pi):=\{\sigma(\pi_1):H^{\infty}_{\pi_1}\rightarrow H_{\pi}^a,\,\,\pi_1\in \pi\},
\end{equation*} for every $\pi\in \widehat{G}$ viewed as a subset of $\textnormal{Rep}(G),$ satisfying for every two elements $\sigma(\pi_1)$ and $\sigma(\pi_2)$ in $\sigma(\pi):$
\begin{equation*}
  \textnormal{If  }  \pi_{1}\sim \pi_2  \textnormal{  then  }   \sigma(\pi_1)\sim \sigma(\pi_2). 
\end{equation*}
\end{definition}
  The following notion  will be useful in order to use the general theory of non-commutative integration (see e.g. Dixmier \cite{Dixmier1953}). 
\begin{definition}
A $\widehat{G}$-field of operators  defined on smooth vectors with range in the Sobolev space $H_\pi^a$ {is measurable}  when for some (and hence for any) $\pi_1\in \pi$ and any vector $v_{\pi_1}\in H_{\pi_1}^\infty,$ as $\pi\in \widehat{G},$ the resulting field $\{\sigma(\pi)v_\pi:\pi\in\widehat{G}\},$ 
is $d\pi$-measurable and
\begin{equation*}
    \int\limits_{\widehat{G} }\Vert v_\pi \Vert^2_{H_\pi^a}d\pi=\int\limits_{\widehat{G} }\Vert\pi(1+\mathcal{R})^{\frac{a}{\nu}} v_\pi \Vert^2_{H_\pi}d\pi<\infty.
\end{equation*}
\end{definition}
\begin{remark}
We always assume that a $\widehat{G}$-field of operators  defined on smooth vectors {with range} in the Sobolev space $H_{\pi}^a$ is $d\pi$-measurable.
\end{remark} The $\widehat{G}$-fields of operators associated to Rockland operators can be defined as follows.
\begin{definition}
Let $L^2_a(\widehat{G})$ denote the space of fields of operators $\sigma$ with range in $H_\pi^a,$ that is,
\begin{equation*}
    \sigma=\{\sigma(\pi):H_\pi^\infty\rightarrow H_\pi^a\}, \textnormal{ with }\{\pi(1+\mathcal{R})^{\frac{a}{\nu}}\sigma(\pi):\pi\in \widehat{G}\}\in L^2(\widehat{G}),
\end{equation*}for one (and hence for any) Rockland operator of homogeneous degree $\nu.$ We also denote
\begin{equation*}
    \Vert \sigma\Vert_{L^2_a(\widehat{G})}:=\Vert \pi(1+\mathcal{R})^{\frac{a}{\nu}}\sigma(\pi)\Vert_{L^2(\widehat{G})}.
\end{equation*}
\end{definition} By using the notation above, we will introduce a family of function spaces required to define $\widehat{G}$-fields of operators (that will be used to define the symbol of a pseudo-differential operator, see Definition \ref{SRCK}).
\begin{definition}[The spaces $\mathscr{L}_{L}(L^2_a(G),L^2_b(G)),$ $\mathcal{K}_{a,b}(G)$ and $L^\infty_{a,b}(\widehat{G})$]
\hspace{0.1cm}
\begin{itemize}
    \item The space $\mathscr{L}_{L}(L^2_a(G),L^2_b(G)) $ consists of all  left-invariant operators $T$  such that  $T:L^2_a(G)\rightarrow L^2_b(G) $ extends to a bounded operator.
    \item The space  $\mathcal{K}_{a,b}(G)$ is the family of all right convolution kernels of elements in $  \mathscr{L}_{L}(L^2_a(G),L^2_b(G))  ,$ i.e. $k=T\delta\in \mathcal{K}_{a,b}(G)$ if and only if  $T\in
    \mathscr{L}_{L}(L^2_a(G),L^2_b(G)) .$ 
    \item We also define the space $L^\infty_{a,b}(\widehat{G})$ by the following condition:  $\sigma\in L^\infty_{a,b}(\widehat{G})$ if 
\begin{equation*}
    \Vert \pi(1+\mathcal{R})^{\frac{b}{\nu}}\sigma(\pi)\pi(1+\mathcal{R})^{-\frac{a}{\nu}} \Vert_{L^\infty(\widehat{G})}:=\sup_{\pi\in\widehat{G}}\Vert \pi(1+\mathcal{R})^{\frac{b}{\nu}}\sigma(\pi)\pi(1+\mathcal{R})^{-\frac{a}{\nu}} \Vert_{\mathscr{B}(H_\pi)}<\infty.
\end{equation*}
\end{itemize} In this case $T_\sigma:L^2_a(G)\rightarrow L^2_b(G)$ extends to a bounded operator with 
\begin{equation*}
  \Vert \sigma\Vert _{L^\infty_{a,b}(\widehat{G})}= \Vert T_{\sigma} \Vert_{\mathscr{L}(L^2_a(G),L^2_b(G))},
\end{equation*} and   $\sigma\in L^\infty_{a,b}(\widehat{G})$ if and only if $k:=\mathscr{F}_{G}^{-1}\sigma \in \mathcal{K}_{a,b}(G).$
\end{definition}
   With the previous definitions, we will introduce the type of symbols that we will use  in this work and under which the quantization formula makes sense.
   
   \begin{definition}[Symbols and right-convolution kernels]\label{SRCK} A {symbol} is a field of operators $\{\sigma(x,\pi):H_\pi^\infty\rightarrow H_\pi,\,\,\pi\in\widehat{G}\},$ depending on $x\in G,$ such that 
   \begin{equation*}
       \sigma(x,\cdot)=\{\sigma(x,\pi):H_\pi^\infty\rightarrow H_\pi,\,\,\pi\in\widehat{G}\}\in L^\infty_{a,b}(\widehat{G})
   \end{equation*}for some $a,b\in \mathbb{R}.$ The {right-convolution kernel} $k\in C^\infty(G,\mathscr{S}'(G))$ associated with $\sigma$ is defined, via the inverse Fourier transform on the group by
   \begin{equation*}
       x\mapsto k(x)\equiv k_{x}:=\mathscr{F}_{G}^{-1}(\sigma(x,\cdot)): G\rightarrow\mathscr{S}'(G).
   \end{equation*}
   \end{definition}
   Definition \ref{SRCK} in this section allows us to establish the following theorem, which gives sense to the quantization of pseudo-differential operators in the graded setting (see Theorem 5.1.39 of \cite{FischerRuzhanskyBook2015}).
   \begin{theorem}\label{thetheoremofsymbol}
   Let us consider a symbol $\sigma$ and its associated right-convolution  kernel $k.$ For every $f\in \mathscr{S}(G),$ let us define the operator $A$ acting on $\mathscr{S}(G),$ via 
   \begin{equation}\label{pseudo}
       Af(x)=(f\ast k_{x})(x),\,\,\,\,\,x\in G.
   \end{equation}Then $Af\in C^\infty,$ and 
   \begin{equation}\label{Quantization}
       Af(x)=\int\limits_{\widehat{G}}\textnormal{\textbf{Tr}}(\pi(x)\sigma(x,\pi)\widehat{f}(\pi))d\pi.
   \end{equation}
   \end{theorem}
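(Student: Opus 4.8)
The plan is to prove the two assertions in turn: first that $Af\in C^{\infty}(G)$, which I would deduce from the smooth dependence of the kernel $k$ on the base point together with elementary properties of group convolution, and then the quantisation identity \eqref{Quantization}, which I would extract from the convolution theorem for $\mathscr{F}_{G}$ followed by the Fourier inversion formula. Throughout I fix $f\in\mathscr{S}(G)$ and use that $G$ is unimodular, so that Haar measure is invariant under left and right translations and under inversion.

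For the smoothness, I would rewrite the group convolution as a distributional pairing. With the substitution $z=yw^{-1}$ one has
\[
(f\ast k_{x})(y)=\int_{G}f(z)\,k_{x}(z^{-1}y)\,dz=\int_{G}f(yw^{-1})\,k_{x}(w)\,dw=\langle k_{x},\,w\mapsto f(yw^{-1})\rangle .
\]
For each fixed $y$ the function $w\mapsto f(yw^{-1})$ belongs to $\mathscr{S}(G)$ and depends smoothly on $y$ in the Schwartz topology, while by Definition \ref{SRCK} the map $x\mapsto k_{x}$ is of class $C^{\infty}$ into $\mathscr{S}'(G)$. Since the pairing $\mathscr{S}'(G)\times\mathscr{S}(G)\to\mathbb{C}$ is continuous and bilinear, the function $F(x,y):=\langle k_{x},\,w\mapsto f(yw^{-1})\rangle$ is jointly smooth on $G\times G$. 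Restricting to the diagonal shows that $Af(x)=F(x,x)$ is smooth, which proves the first claim.

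For the identity \eqref{Quantization}, I would fix $x$ and recall that $k_{x}=\mathscr{F}_{G}^{-1}(\sigma(x,\cdot))$, so that $\widehat{k_{x}}(\pi)=\sigma(x,\pi)$. Taking the group Fourier transform in the convolution variable and using the standard convolution identity $\widehat{f\ast g}(\pi)=\widehat{g}(\pi)\widehat{f}(\pi)$ gives $\widehat{f\ast k_{x}}(\pi)=\sigma(x,\pi)\widehat{f}(\pi)$. The Fourier inversion formula then yields, for every $y\in G$,
\[
(f\ast k_{x})(y)=\int_{\widehat{G}}\textnormal{\textbf{Tr}}\big(\pi(y)\,\sigma(x,\pi)\,\widehat{f}(\pi)\big)\,d\pi ,
\]
and putting $y=x$ together with the cyclicity of the trace produces exactly \eqref{Quantization}.

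The main obstacle is the rigorous justification of the absolute convergence of this operator-valued integral, which also legitimises the interchange of the $G$- and $\widehat{G}$-integrations implicit above. Here I would use that $\sigma(x,\cdot)\in L^{\infty}_{a,b}(\widehat{G})$ for some $a,b\in\mathbb{R}$, so that $\pi(1+\mathcal{R})^{\frac{b}{\nu}}\sigma(x,\pi)\pi(1+\mathcal{R})^{-\frac{a}{\nu}}$ is uniformly bounded in $\pi$, and pair this against the rapid decay of $\widehat{f}$ coming from $f\in\mathscr{S}(G)$. Concretely, I would factor
\[
\pi(x)\sigma(x,\pi)\widehat{f}(\pi)=\pi(x)\,\pi(1+\mathcal{R})^{-\frac{b}{\nu}}\big[\pi(1+\mathcal{R})^{\frac{b}{\nu}}\sigma(x,\pi)\pi(1+\mathcal{R})^{-\frac{a}{\nu}}\big]\,\pi(1+\mathcal{R})^{\frac{a}{\nu}}\widehat{f}(\pi),
\]
and then insert further negative powers of $\pi(1+\mathcal{R})$, which are Hilbert--Schmidt, so as to split the integrand into a product of two Hilbert--Schmidt factors and hence into trace class. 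Since $(1+\mathcal{R})^{s}f\in\mathscr{S}(G)\subset L^{2}(G)$ for every $s$, the Plancherel theorem bounds the resulting trace norm by an integrable function of $\pi$, and the argument closes.
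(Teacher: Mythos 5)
Your proposal is correct, but a caveat on the comparison: the paper does not actually prove Theorem \ref{thetheoremofsymbol} --- it imports it verbatim from \cite{FischerRuzhanskyBook2015} (Theorem 5.1.39), so the only available benchmark is the proof given there, and your argument follows essentially that canonical route: smoothness from the $C^\infty(G,\mathscr{S}'(G))$ regularity of $x\mapsto k_x$ in Definition \ref{SRCK} paired against the smooth $\mathscr{S}(G)$-valued family $y\mapsto f(y\,\cdot^{-1})$ (the pairing is only hypocontinuous rather than jointly continuous, but since the parameters range over the finite-dimensional manifold $G\times G$ this still yields joint smoothness of $F$); then the convolution identity $\mathscr{F}_{G}(f\ast k_x)=\sigma(x,\cdot)\widehat{f}$ and Fourier inversion, with convergence secured by weaving powers of $\pi(1+\mathcal{R})$ through the integrand. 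Incidentally, setting $y=x$ requires no cyclicity of the trace: the inversion formula already produces $\textnormal{\textbf{Tr}}(\pi(x)\sigma(x,\pi)\widehat{f}(\pi))$ with the factors in the right order.

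Two technical points in your last paragraph deserve sharpening. First, the individual operators $\pi(1+\mathcal{R})^{-s/\nu}$ need not be Hilbert--Schmidt for each fixed $\pi$; what the argument actually uses is that the \emph{field} $\{\pi(1+\mathcal{R})^{-s/\nu}:\pi\in\widehat{G}\}$ belongs to $L^2(\widehat{G})$ for $s>Q/2$, equivalently that the Bessel kernel $(1+\mathcal{R})^{-s/\nu}\delta$ lies in $L^2(G)$, so that after Cauchy--Schwarz over $\widehat{G}$ and the Plancherel theorem the trace norm is dominated by an expression of the form $\Vert(1+\mathcal{R})^{-s/\nu}\delta\Vert_{L^2(G)}\,\Vert(1+\mathcal{R})^{t}f\Vert_{L^2(G)}$, which is where the rapid decay of $f$ enters. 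Second, in your factorisation the prefactor $\pi(x)\pi(1+\mathcal{R})^{-b/\nu}$ is uniformly bounded in operator norm only when $b\geq 0$; for $b<0$ it cannot simply be pulled out, and since $\pi(x)$ does not commute with $\pi(1+\mathcal{R})$ you should first move $\pi(x)$ past $\widehat{f}(\pi)$ by cyclicity, using $\widehat{f}(\pi)\pi(x)=\mathscr{F}_{G}\bigl(f(x\,\cdot)\bigr)(\pi)$ with $f(x\,\cdot)\in\mathscr{S}(G)$, and then absorb the remaining power of $\pi(1+\mathcal{R})$ into this Schwartz factor (right multiplication corresponding to the right-invariant counterpart of $\mathcal{R}$, which preserves $\mathscr{S}(G)$). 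Neither issue is structural --- the Schwartz hypothesis supplies arbitrary powers of $(1+\mathcal{R})$ --- but as written, the ``further negative powers, which are Hilbert--Schmidt'' step is the one place a referee would push back.
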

   
 Theorem   \ref{thetheoremofsymbol} motivates the following definition.
 \begin{definition}\label{DefiPSDO}
A continuous linear operator $A:C^\infty(G)\rightarrow\mathscr{D}'(G)$ with Schwartz kernel $K_A\in C^{\infty}(G)\widehat{\otimes}_{\pi} \mathscr{D}'(G),$ is a pseudo-differential operator, if
 there exists a  \textit{symbol}, which is a field of operators $\{\sigma(x,\pi):H_\pi^\infty\rightarrow H_\pi,\,\,\pi\in\widehat{G}\},$ depending on $x\in G,$ such that 
   \begin{equation*}
       \sigma(x,\cdot)=\{\sigma(x,\pi):H_\pi^\infty\rightarrow H_\pi,\,\,\pi\in\widehat{G}\}\in L^\infty_{a,b}(\widehat{G})
   \end{equation*}for some $a,b\in \mathbb{R},$ such that, the Schwartz kernel of $A$ is given by 
\begin{equation*}
    K_{A}(x,y)=\int\limits_{\widehat{G}}\textnormal{\textbf{Tr}}(\pi(y^{-1}x)\sigma(x,\pi))d\pi=k_{x}(y^{-1}x).
\end{equation*}
{{In this case, we use  the notation
\begin{equation*}
    A:=\textnormal{Op}(\sigma),
\end{equation*}     
to indicate that $A$ is the pseudo-differential operator associated with  $\sigma.$}}
\end{definition}

  \begin{remark}[Hilbert-Schmidt operators]
  Let $A$ in \eqref{pseudo} be a pseudo-differential operator with symbol $\sigma.$ The operator $A:L^2({G})\rightarrow L^2({G})$ extends to a Hilbert-Schmidt operator, if and only if, $\int\int_{G\times G}|K_A(x,y)|^2dxdy<\infty.$ Equivalently, the Plancherel theorem, allows us to express the last condition  as
  \begin{align*}
    &\int\limits_{G}\int\limits_{G}|K_A(x,y)|^2dydx =\int\limits_{G}\int\limits_{G}|k_x(y^{-1}x)|^2dydx=\int\limits_{G}\int\limits_{G}|k_x(z)|^2dzdx\\
    &=\int\limits_{G}\int\limits_{\widehat{G}}\Vert\sigma(x,\pi) \Vert_{\textnormal{HS}}^2d\pi dx=\Vert \sigma(\cdot,\cdot) \Vert^2_{L^2(G\times \widehat{G})}<\infty.
  \end{align*}So, in terms of the symbol $\sigma,$ $A:L^2({G})\rightarrow L^2({G})$ extends to a Hilbert-Schmidt operator if and only if $\sigma\in L^2(G\times \widehat{G}).$
  \end{remark} 
   \begin{remark}[$L^\infty(G)$-boundedness of pseudo-differential operators]\label{Linftyremark} If we assume the following condition
   \begin{equation*}
       \sup_{x\in G}\Vert\mathscr{F}_{G}^{-1}\sigma(x,\cdot) \Vert_{L^1(G)}=\sup_{x\in G}\Vert k_{x} \Vert_{L^1(G)}<\infty,
   \end{equation*} then 
   \begin{equation*}
       |Af(x)|\leq \int\limits_{G}|k_x(y^{-1}x)||f(y)|dy\leq  \sup_{x\in G}\Vert k_{x} \Vert_{L^1(G)}\Vert f\Vert_{L^\infty(G)}.
   \end{equation*}Consequently,
   $$\Vert A\Vert_{\mathscr{B}(L^\infty(G))}\leq \sup_{x\in G}\Vert k_{x} \Vert_{L^1(G)}. $$ This simple fact will be used in our further analysis.
   \end{remark}
\section{Computing the symbol of a continuous linear operator}
Let $\mathcal{R}$ be a positive Rockland operator on a graded Lie group. Then $\mathcal{R}$  and $\pi(\mathcal{R}):=d\pi(\mathcal{R})$ (the infinitesimal representation of $\mathcal{R}$) are symmetric and  densely defined operators on $C^\infty_0 (G)$ and $H^\infty_\pi\subset H_\pi.$ We will denote by $\mathcal{R}$  and $\pi(\mathcal{R}):=d\pi(\mathcal{R})$ their self-adjoint extensions to $L^2(G)$ and $H_\pi$ respectively (see Proposition 4.1.5 and Corollary 4.1.16 of \cite[page 178]{FischerRuzhanskyBook2015}). 

\begin{remark}\label{symbolremark}
Let $\mathcal{R}$ be a positive  Rockland operator of  homogeneous degree $\nu$ on a graded Lie group $G.$ Every operator $  \pi(\mathcal{R}) $ has discrete spectrum (see ter Elst and Robinson \cite{TElst+Robinson}) admitting, by the spectral theorem, a basis  contained in its domain. In this case, $H_{\pi}^{\infty}\subset\textnormal{Dom}(\pi(\mathcal{R}))\subset H_{\pi},$ but in view of Proposition 4.1.5 and Corollary 4.1.16 of \cite[page 178]{FischerRuzhanskyBook2015}, every $ \pi(\mathcal{R})$ is densely defined and symmetric on $H^\infty_\pi,$ and this fact allows us to define the (restricted) domain of $\pi(\mathcal{R}),$ as \begin{equation}\label{RestrictedDomain}
    \textnormal{Dom}_{\textnormal{rest}}(\pi(\mathcal{R}))=H_{\pi}^{\infty}.   
\end{equation}Next, when we mention the domain of $\pi(\mathcal{R})$ we are referring to the restricted domain in  \eqref{RestrictedDomain}. This fact will be important, because, via the spectral theorem we can construct a basis for $H_{\pi},$ consisting of vectors in $\textnormal{Dom}_{\textnormal{rest}}(\pi(\mathcal{R}))=H_{\pi}^{\infty},$ where the operator $\pi(\mathcal{R})$ is diagonal. So, if  $B_\pi=\{e_{\pi,k}\}_{k=1}^\infty\subset H_{\pi}^{\infty},$ is  a basis such that $\pi(\mathcal{R})$  satisfies

$ \pi(\mathcal{R})e_{\pi,k}=\lambda_{\pi,k}e_{\pi,k},\,k\in \mathbb{N}, \,\,\,\pi \in \widehat{G}, $ for every $x\in G,$ the function $x\mapsto \pi(x)e_{\pi,k},$ is smooth and the family of functions
\begin{equation}\label{piij}
   \pi_{ij}:G\rightarrow \mathbb{C},\,\, \pi(x)_{ij}:=( \pi(x)e_{\pi,i},e_{\pi,j} )_{H_\pi},\,\,x\in G,
\end{equation} are smooth functions on $G.$  Consequently, for every continuous linear operator $A:C^\infty(G)\rightarrow C^\infty(G)$ we have \begin{equation*}\{\pi_{ij}\}_{i,j=1}^\infty\subset \textnormal{Dom}(A)=C^\infty(G),\end{equation*} for every $ \pi\in \widehat{G}. $
\end{remark} 

In view of Remark \ref{symbolremark}  we have the following theorem where we present the formula of a global symbol in terms of its corresponding pseudo-differential operator in the graded setting.

\begin{theorem}\label{SymbolPseudo}
Let  $\mathcal{R}$ be a positive  Rockland operator of  homogeneous degree $\nu$ on a graded Lie group $G.$ For every $\pi\in \widehat{G},$ let  $B_\pi=\{e_{\pi,k}\}_{k=1}^\infty\subset H_{\pi}^{\infty},$ be  a basis where the operator $\pi(\mathcal{R})$ is diagonal, i.e., 
\begin{equation*}
    \pi(\mathcal{R})e_{\pi,k}=\lambda_{\pi,k}e_{\pi,k},\,k\in \mathbb{N}, \,\,\,\pi \in \widehat{G}.
\end{equation*} For every $x\in G,$ and $\pi\in \widehat{G},$ let us consider the functions $\pi(\cdot)_{ij}\in C^\infty(G)$ in \eqref{piij} induced by the coefficients of the matrix representation
 of $\pi(x)$ in the basis $B_\pi.$ If $A:C^\infty(G)\rightarrow C^\infty(G)$ is a continuous linear operator with symbol 
\begin{equation}\label{definition}
    \sigma:=\{\sigma(x,\pi)\in \mathscr{L}(H_{\pi}^\infty, H_\pi):\, x\in G,\,\pi\in \widehat{G}\},\, 
\end{equation} such that
\begin{equation}\label{quantization}
    Af(x)=\int\limits_{\widehat{G}}\textnormal{\textbf{Tr}}(\pi(x)\sigma(x,\pi)\widehat{f}(\pi))d\pi,
\end{equation} for every $f\in \mathscr{S}(G),$ and a.e. $(x,\pi),$ and if   $A\pi(x)$ is the  densely defined operator on $H_{\pi}^\infty,$ via
\begin{equation}\label{thesymbolofA}
  A\pi(x)\equiv  ((A\pi(x)e_{\pi,i},e_{\pi,j}))_{i,j=1}^\infty,\,\,\,(A\pi(x)e_{\pi,i},e_{\pi,j})=:(A\pi_{ij})(x),
\end{equation} then we have
\begin{equation}\label{formulasymbol}
    \sigma(x,\pi)=\pi(x)^*A\pi(x),
\end{equation} for every $x\in G,$ and a.e. $\pi\in \widehat{G}.$ 
\end{theorem}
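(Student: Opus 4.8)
The plan is to compute the matrix entries of the field $\pi(x)^{*}A\pi(x)$ in the basis $B_\pi=\{e_{\pi,k}\}$ and to match them with those of $\sigma(x,\pi)$. Since every $\pi(x)$ is unitary, it suffices to establish the operator identity $A\pi(x)=\pi(x)\sigma(x,\pi)$ on $H_\pi^\infty$ and then to multiply on the left by $\pi(x)^{*}$. By \eqref{thesymbolofA} the $(i,j)$-entry of $A\pi(x)$ is $(A\pi_{ij})(x)$, where $\pi_{ij}\in C^\infty(G)$ is the matrix coefficient from \eqref{piij}, and by Remark \ref{symbolremark} these coefficients lie in $\textnormal{Dom}(A)=C^\infty(G)$. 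Thus everything reduces to evaluating $A$ on the smooth functions $\pi_{ij}$ and reading off the outcome against the basis $B_\pi$.

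The core computation would proceed through the kernel representation \eqref{pseudo} of Theorem \ref{thetheoremofsymbol}, namely $Af(x)=(f\ast k_x)(x)$ with $k_x=\mathscr{F}_G^{-1}(\sigma(x,\cdot))$. Fixing $x\in G$ and freezing the kernel at this point, I would write
\begin{equation*}
(A\pi_{ij})(x)=(\pi_{ij}\ast k_x)(x)=\int\limits_G \pi_{ij}(y)\,k_x(y^{-1}x)\,dy,
\end{equation*}
and then perform the substitution $z=y^{-1}x$, i.e. $y=xz^{-1}$, which is measure preserving since $G$ is unimodular, to obtain $\int_G \pi_{ij}(xz^{-1})\,k_x(z)\,dz$. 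Using the homomorphism property together with unitarity, $\pi(xz^{-1})=\pi(x)\pi(z)^{*}$, so that $\pi_{ij}(xz^{-1})=(\pi(x)\pi(z)^{*}e_{\pi,i},e_{\pi,j})_{H_\pi}$; pulling the operator $\pi(x)$ and the $z$-integration through the inner product gives
\begin{equation*}
(A\pi_{ij})(x)=\Big(\pi(x)\Big[\int\limits_G k_x(z)\,\pi(z)^{*}\,dz\Big]e_{\pi,i},\,e_{\pi,j}\Big)_{H_\pi}=(\pi(x)\,\widehat{k_x}(\pi)\,e_{\pi,i},\,e_{\pi,j})_{H_\pi},
\end{equation*}
because $\int_G k_x(z)\pi(z)^{*}\,dz=\widehat{k_x}(\pi)=\mathscr{F}_G(\mathscr{F}_G^{-1}\sigma(x,\cdot))(\pi)=\sigma(x,\pi)$.

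From here the argument concludes quickly: since $(A\pi(x)e_{\pi,i},e_{\pi,j})=(\pi(x)\sigma(x,\pi)e_{\pi,i},e_{\pi,j})$ holds for every pair $(i,j)$, one obtains the operator identity $A\pi(x)=\pi(x)\sigma(x,\pi)$ on $H_\pi^\infty$, and applying $\pi(x)^{*}$ on the left together with $\pi(x)^{*}\pi(x)=\mathrm{Id}_{H_\pi}$ yields the desired formula \eqref{formulasymbol}, valid for every $x\in G$ and a.e. $\pi\in\widehat{G}$.

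The main obstacle is not the algebra above but its rigorous justification, because the matrix coefficients $\pi_{ij}$ are smooth and bounded yet not Schwartz, while $k_x$ is a priori only a tempered distribution in $\mathscr{S}'(G)$; in particular the quantization formula \eqref{quantization} cannot be applied to $f=\pi_{ij}$ directly, since $\widehat{\pi_{ij}}(\pi)$ has no pointwise meaning. I would handle this by exploiting the continuity of $A$ on $C^\infty(G)$: approximate $\pi_{ij}$ in the $C^\infty(G)$ topology by Schwartz functions $\pi_{ij}\chi_n$ with cut-offs $\chi_n\uparrow 1$, apply the kernel identity \eqref{pseudo} to each $\pi_{ij}\chi_n\in\mathscr{S}(G)$, and pass to the limit, controlling the distributional pairing of $k_x$ against the bounded smooth tails. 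The measurability of the underlying $\widehat{G}$-field then guarantees that the resulting identity holds for almost every $\pi$, while a parallel check shows that the entries $(A\pi_{ij})(x)$ genuinely assemble into a densely defined operator $A\pi(x)$ on $H_\pi^\infty$, so that \eqref{formulasymbol} is meaningful as an operator identity.
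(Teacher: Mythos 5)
Your proposal is correct and follows essentially the same route as the paper's proof: both evaluate $A$ on the matrix coefficients $\pi_{ij}$ through the right-convolution kernel, perform the substitution $z=y^{-1}x$, use $\pi(xz^{-1})=\pi(x)\pi(z)^{*}$ together with $\int_G k_x(z)\pi(z)^{*}dz=\sigma(x,\pi)$ to identify $(A\pi(x)e_{\pi,i},e_{\pi,j})_{H_\pi}=(\pi(x)\sigma(x,\pi)e_{\pi,i},e_{\pi,j})_{H_\pi}$, and then left-multiply by $\pi(x)^{*}=\pi(x)^{-1}$. Your closing paragraph, which justifies applying the kernel identity \eqref{pseudo} to the non-Schwartz functions $\pi_{ij}$ via cut-off approximation and the continuity of $A$ on $C^\infty(G)$, is a refinement of a step the paper's proof performs without comment, not a deviation from its method.
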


\begin{proof}
Let $x\in G$ and $f\in \mathscr{S}(G).$ The Fourier inversion formula gives
\begin{equation*}
    f(x)=\int\limits_{\widehat{G}}\textnormal{\textbf{Tr}}(\pi(x)\widehat{f}(\pi))d\pi.
\end{equation*}  We will show that $\sigma$ defined by \eqref{formulasymbol} via \eqref{thesymbolofA}, satisfies \eqref{quantization}. Once we do this, the symbol $\sigma$ satisfying \eqref{quantization} is unique. Indeed, if there exists $A',$ defined by
\begin{equation}\label{quantization'}
    A' f(x)=\int\limits_{\widehat{G}}\textnormal{\textbf{Tr}}(\widehat{f}(\pi){\sigma}'(x,\pi)\pi(x))d\pi,
\end{equation} for every $f\in C^\infty_{0}(G),$ such that  $A=A' $ on $C^\infty(G),$  then $\sigma(x,\pi)=\sigma'(x,\pi)$ for $a.e$ $x\in G,$ and $\pi\in \widehat{G}.$ The equality of this two operators is understood in the sense that  $\sigma(x,\pi)v=\sigma'(x,\pi)v$ for every $v\in H^\infty_\pi.$ Because $A,A':C^\infty(G)\rightarrow C^\infty(G)$ are continuous linear operators, both can be extended as continuous linear operator on the whole space of distributions, and we will denote by $A,A': \mathscr{D}'(G)\rightarrow \mathscr{D}'(G)$  these extensions. Clearly, $A=A'$ on $ \mathscr{D}'(G).$ In particular, if $\delta_{e_G}$ is the Delta distribution at the identity element of $G,$ we have
\begin{equation*}
    k_{x}(\cdot)=(A\delta_{e_G})(x)=(A'\delta_{e_G})(x)=k_{x}'(\cdot)\in \mathscr{D}'(G),\,\,x\in G,
\end{equation*} were $x\mapsto k_{x}(\cdot)$ and  $x\mapsto k_{x}'(\cdot)$ are the   convolution kernels of $A$ and $A'$ respectively. In view of the identities
\begin{equation*}
    \sigma(x,\pi)=\widehat{k}_x(\pi)=\widehat{k}_x'(\pi)=\sigma'(x,\pi),
\end{equation*} we conclude the uniqueness of $\sigma$. Now, assume that $\sigma$ is defined by \eqref{formulasymbol} via \eqref{thesymbolofA}. In order to prove \eqref{quantization}, we compute explicitly $A\pi_{ij}(x).$ Indeed,
\begin{align*}
    A\pi_{ij}(x)&=\int\limits_{G}k_{x}(y^{-1}x)\pi_{ij}(y)dy\\&=\int\limits_{G}k_{x}(y^{-1}x)(\pi(y)e_{\pi,i},e_{\pi,j})_{H_\pi}dy\\
    &=\left(\int\limits_{G}k_{x}(y^{-1}x)\pi(y)dye_{\pi,i},e_{\pi,j}\right)_{H_\pi}.
\end{align*}
The variable change $z:=y^{-1}x,$ allows us to write
\begin{align*}
 A\pi_{ij}(x)&=\left(\int\limits_{G}k_{x}(y^{-1}x)\pi(y)dye_{\pi,i},e_{\pi,j}\right)_{H_\pi}\\
 &=\left(\int\limits_{G}k_{x}(z)\pi(xz^{-1})dze_{\pi,i},e_{\pi,j}\right)_{H_\pi}\\&= \left(\pi(x)\int\limits_{G}k_{x}(z)\pi(z)^{*}dze_{\pi,i},e_{\pi,j}\right)_{H_\pi}\\
 &= \left(\pi(x)\sigma(x,\pi)e_{\pi,i},e_{\pi,j}\right)_{H_\pi}.
\end{align*}
So, we have proved that in terms of the basis $B_\pi,$ the  $(i,j)$-entry  in the matrix-representation of $A\pi(x),$ agrees with the  $(i,j)$-element  in the matrix-representation of the operator  $\pi(x)\sigma(x,\pi).$ Consequently, we have the equality of operators,
\begin{equation*}
    A\pi(x)=\pi(x)\sigma(x,\pi):H_{\pi}^\infty\rightarrow H_\pi.
\end{equation*}
By using that $\pi(x):H_{\pi}\rightarrow H_\pi$ is a unitary operator, and that $\pi(x)^{*}=\pi(x)^{-1},$ we conclude that
\begin{equation*}
    \sigma(x,\pi)=\pi(x)^*A\pi(x).
\end{equation*}
Thus,  we finish the proof.
\end{proof}
\begin{example}
To illustrate Theorem \ref{SymbolPseudo},  let us consider $A=(\varepsilon+\mathcal{R})^{\frac{a}{\nu}}$ where $a\in \mathbb{R}$ and $\varepsilon>0$ (or $a>0$ and $\varepsilon\geq 0$). Because $A$ is in this case a left-invariant operator, the symbol of $A,$ $\sigma_{A}(\pi)=\sigma(x,\pi)=(\varepsilon+\pi(\mathcal{R}))^{\frac{a}{\nu}}$ is independent of $x\in G.$ Thus, Theorem \ref{SymbolPseudo} implies that
\begin{equation*}
    (\varepsilon+\pi(\mathcal{R}))^{\frac{a}{\nu}}=\pi(x)^*(\varepsilon+\mathcal{R})^{\frac{a}{\nu}}\pi(x).
\end{equation*}In particular, for $x=e_{G}$ we have
\begin{equation*}
    (\varepsilon+\pi(\mathcal{R}))^{\frac{a}{\nu}}=(\varepsilon+\mathcal{R})^{\frac{a}{\nu}}\pi(e_G),\,\,\,\,\,\,\hspace{0.4cm}
\end{equation*}where $e_G$ is the identity element of $G.$ In particular, for $a=\nu$ and $\varepsilon=0,$ we have the identity $\pi(\mathcal{R})=(\mathcal{R}\pi)(e_G).$ 
\end{example}
\begin{remark}
If $A:C^\infty(G)\rightarrow C^\infty(G),$ is a (left-invariant operator) Fourier multiplier, from \eqref{formulasymbol}, the global symbol $\sigma$ of $A$ can be computed by the identity $\sigma(\pi)=\pi(x)^*A\pi(x).$ In particular, for $x=e_G,$ we have $\sigma(\pi)=A\pi(e_{G}),$ for a.e. $\pi\in\widehat{G}.$ Properties for Fourier multipliers on $L^{p}(G)$-spaces including the H\"ormander-Mihlin condition in terms of a difference structure on the unitary dual  can be found in \cite{FischerRuzhansky2014}.
\end{remark}

\section{Estimates for pseudo-differential operators}\label{sec3} 
In this section we prove our main theorem. We will use the global version of the Calder\'on-Vaillancourt theorem stated in \cite{FischerRuzhanskyBook2015} in order to prove the $L^\infty$-$BMO$ boundedness for certain   classes of pseudo-differential operators. Later we will conclude our analysis by using the Fefferman-Stein complex interpolation.  

\subsection{Global H\"ormander classes of pseudo-differential operators}

The main tool in the construction of global H\"ormander classes is the notion of difference operators. 
 Indeed, for every smooth function $q\in C^\infty(G)$ and $\sigma\in L^\infty_{a,b}(G),$ where $a,b\in\mathbb{R},$ the difference operator $\Delta_q$ acts on $\sigma$ according to the formula (see Definition 5.2.1 of \cite{FischerRuzhanskyBook2015}),
 \begin{equation*}
     \Delta_q\sigma(\pi)\equiv [\Delta_q\sigma](\pi):=\mathscr{F}_{G}(qf)(\pi),\,\textnormal{ for  a.e.  }\pi\in\widehat{G},\textnormal{ where }f:=\mathscr{F}_G^{-1}\sigma\,\,.
 \end{equation*}
We will reserve the notation $\Delta^{\alpha}$ for the difference operators defined by the functions $q_{\alpha}$ and $\tilde{q}_{\alpha}$ defined by $q_{\alpha}(x):=x^\alpha$ and $\tilde{q}_{\alpha}(x)=(x^{-1})^\alpha,$ respectively. In particular, we have the Leibnitz rule,
\begin{equation}\label{differenceespcia;l}
    \Delta^{\alpha}(\sigma\tau)=\sum_{\alpha_1+\alpha_2=\alpha}c_{\alpha_1,\alpha_2}\Delta^{\alpha_1}(\sigma)\Delta^{\alpha_2}(\tau),\,\,\,\sigma,\tau\in L^\infty_{a,b}(\widehat{G}).
\end{equation} 
For our further analysis we will use the following property of the difference operators $\Delta^\alpha,$ (see e.g. \cite[page 20]{FischerFermanian-Kammerer2017}),
\begin{equation}\label{FischerFermanian-Kammerer2017}
    \Delta^{\alpha}(\sigma_{r\cdot})(\pi)=r^{ [\alpha] }(\Delta^{\alpha}\sigma)(r\cdot \pi),\,\,\,r>0\,\,\,\pi\in\widehat{G},
\end{equation}where we have denoted \begin{equation}\label{eeeeeeeee}
    \sigma_{r\cdot}:=\{\sigma(r\cdot \pi):\pi\in\widehat{G}\},\,\,r\cdot \pi(x):=\pi(D_r(x)),\,\,x\in G. 
\end{equation}

In terms of difference operators, the global H\"ormander classes introduced in \cite{FischerRuzhanskyBook2015} can be introduced as follows.
 Let $0\leq \delta,\rho\leq 1,$ and let $\mathcal{R}$ be a positive Rockland operator of homogeneous degree $\nu>0.$ If $m\in \mathbb{R},$ we say that the symbol $\sigma\in L^\infty_{a,b}(\widehat{G}), $ where $a,b\in\mathbb{R},$ belongs to the $(\rho,\delta)$-H\"ormander class of order $m,$ $S^m_{\rho,\delta}(G\times \widehat{G}),$ if for all $\gamma\in \mathbb{R},$ the following conditions
\begin{equation}\label{seminorm}
   p_{\alpha,\beta,\gamma,m}(\sigma)= \operatornamewithlimits{ess\, sup}_{(x,\pi)\in G\times \widehat{G}}\Vert \pi(1+\mathcal{R})^{\frac{\rho [\alpha] -\delta [\beta] -m-\gamma}{\nu}}[X_{x}^\beta \Delta^{\alpha}\sigma(x,\pi)] \pi(1+\mathcal{R})^{\frac{\gamma}{\nu}}\Vert_{\textnormal{op}}<\infty,
\end{equation}
hold true for all $\alpha$ and $\beta$ in $\mathbb{N}_0^n.$ The resulting class $S^m_{\rho,\delta}(G\times \widehat{G}),$ does not depend on the choice of the Rockland operator $\mathcal{R}.$ In particular (see Theorem 5.5.20 of \cite{FischerRuzhanskyBook2015}), the following facts are equivalents: 
\begin{itemize}
    \item $\forall \alpha,\beta\in \mathbb{N}_{0}^n, \forall\gamma\in \mathbb{R}, $   $p_{\alpha,\beta,\gamma,m}(\sigma)<\infty.$
    
    \item  $\forall \alpha,\beta\in \mathbb{N}_{0}^n, $   $p_{\alpha,\beta,0,m}(\sigma)<\infty.$
    
    \item  $\forall \alpha,\beta\in \mathbb{N}_{0}^n, $   $p_{\alpha,\beta,m+\delta [\beta] -\rho [\alpha] ,m}(\sigma)<\infty.$
\end{itemize}
We will denote,
\begin{equation}
     \Vert \sigma\Vert_{k\,,S^{m}_{\rho,\delta}}= \max_{ [\alpha] + [\beta] \leq k}\{  p_{\alpha,\beta,0,m}(\sigma)\}.
\end{equation}
\begin{remark}
In the abelian case $G=\mathbb{R}^n,$ endowed in its natural structure of group, and $\mathcal{R}=-\Delta_{x},$ $x\in \mathbb{R}^n,$ with $\Delta_{x}=\sum_{j=1}^{n}\partial_{x_i}^{2}$ being the usual Laplace operator on $\mathbb{R}^n,$ the classes defined via \eqref{seminorm}, agree with the well known H\"ormander classes on $\mathbb{R}^n$ (see e.g. H\"ormander \cite[Vol. 3]{HormanderBook34}). In this case the difference operators are the partial derivatives on $\mathbb{R}^n,$ (see Remark 5.2.13 and Example 5.2.6 of \cite{FischerRuzhanskyBook2015}).  
\end{remark}
For an arbitrary graded Lie group, the H\"ormander classes $S^m_{\rho,\delta}(G\times \widehat{G}),$ $m\in\mathbb{R},$ provide a symbolic calculus closed under compositions, adjoints, and existence of parametrices. The following theorem summarises the composition and the adjoint rules for global operators  (see Theorem 5.5.3 and Theorem 5.5.12 of \cite{FischerRuzhanskyBook2015} for details about the existence of the corresponding  asymptotic expansions).
\begin{theorem}\label{calculus}
Let $A$ and $B$ be two pseudo-differential operators with symbols $\sigma\in S^{m}_{\rho,\delta}(G\times \widehat{G})$ and $\tau\in S^{r}_{\rho,\delta}(G\times \widehat{G}),$ where $m,r\in \mathbb{R},$ $0\leq \delta\leq \rho\leq1 ,$ and $\delta<1.$ Then the symbol of the composition operator $AB,$ belongs to  $S^{m+r}_{\rho,\delta}(G\times \widehat{G}). $ Moreover, the symbol of  the formal adjoint $A^*$ of $A,$ belongs to the class $ S^{m}_{\rho,\delta}(G\times \widehat{G}).$ 
\end{theorem}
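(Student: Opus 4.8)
The plan is to establish both statements through asymptotic expansions of the composed and adjoint symbols, built from the difference operators $\Delta^\alpha$ and the left-invariant derivatives $X^\beta$, and then to prove term-by-term membership in the asserted class together with a remainder estimate. The organising principle is the order bookkeeping encoded in \eqref{seminorm}: applying $\Delta^{\alpha_0}$ to a symbol of order $m$ lowers its order to $m-\rho[\alpha_0]$ (since $X^\beta\Delta^\alpha\Delta^{\alpha_0}=X^\beta\Delta^{\alpha+\alpha_0}$ shifts the weight exponent by $\rho[\alpha_0]$), while applying $X_x^{\beta_0}$ raises the order to $m+\delta[\beta_0]$. Hence every correction term carrying a factor $\Delta^{\alpha}(\cdot)\,X_x^{\alpha}(\cdot)$ has order reduced by exactly $(\rho-\delta)[\alpha]\geq 0$ relative to the leading term.

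First I would treat the composition. The leading term should be the pointwise product $\sigma(x,\pi)\tau(x,\pi)$: for $x$-independent (left-invariant) symbols the operators are right-convolutions, and $\mathscr{F}_G$ turns the convolution of the kernels into the product of the symbols, so $\textnormal{Op}(\sigma)\textnormal{Op}(\tau)=\textnormal{Op}(\sigma\tau)$ exactly, and $\sigma\tau\in S^{m+r}_{\rho,\delta}$ follows from the Leibniz rule \eqref{differenceespcia;l} inserted into the seminorms \eqref{seminorm}. To capture the $x$-dependence I would Taylor-expand $\tau(y,\pi)$ about $y=x$ along the left-invariant vector fields, writing $\tau(y,\pi)=\sum_{[\alpha]<M}q_\alpha(x^{-1}y)\,X_x^\alpha\tau(x,\pi)+R_M$, and feed this into the quantization formula \eqref{Quantization} for $AB$. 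Since the polynomials $q_\alpha$ are precisely the functions generating $\Delta^\alpha$, after applying $\mathscr{F}_G$ the $\alpha$-term becomes $\Delta^\alpha\sigma(x,\pi)\,X_x^\alpha\tau(x,\pi)$, which by the bookkeeping above lies in $S^{m+r-(\rho-\delta)[\alpha]}_{\rho,\delta}$.

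The adjoint is handled analogously, starting from the kernel identity $K_{A^*}(x,y)=\overline{K_A(y,x)}$, which at the symbol level reads $\sigma_{A^*}(x,\pi)\sim\sum_\alpha \Delta^\alpha X_x^\alpha\bigl(\sigma(x,\pi)^*\bigr)$ once one Taylor-expands the $x$-dependence of the kernel through the group exponential; each summand again drops the order by $(\rho-\delta)[\alpha]$, and the dilation relation \eqref{FischerFermanian-Kammerer2017} confirms that the scaling of $\Delta^\alpha$ is consistent with the $(1+\mathcal{R})$-weights appearing in \eqref{seminorm}. Closure under adjoints then follows once the remainder is controlled.

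The main obstacle will be the remainder estimates and the genuine noncommutativity of the Taylor expansion on $G$. Unlike the Euclidean case the Taylor polynomials and the group convolution do not commute freely, so bounding $R_M$ requires estimating the Calder\'on-Zygmund-type kernels $k_x$ in terms of the seminorms and exploiting that each further term is smoother by $(\rho-\delta)[\alpha]$. When $\rho>\delta$ this yields a genuinely improving expansion and the remainder lands in a class of strictly lower order; in the borderline regime $\rho=\delta$ (with $\delta<1$) the expansion no longer improves, and one must instead argue directly that the full composed and adjoint symbols stay bounded in the seminorms \eqref{seminorm} of order $m+r$ and $m$ respectively, using the $L^\infty_{a,b}(\widehat{G})$ mapping framework and the Calder\'on-Vaillancourt boundedness rather than a convergent expansion. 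The hypothesis $\delta<1$ is essential here: it keeps the derivative losses $\delta[\beta]$ strictly weaker than the available smoothing and guarantees that the kernels remain integrable away from the diagonal.
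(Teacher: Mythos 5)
The paper does not actually prove Theorem \ref{calculus}: it is imported wholesale from the symbolic calculus of Fischer and Ruzhansky, with the text pointing to Theorems 5.5.3 and 5.5.12 of \cite{FischerRuzhanskyBook2015} for the proofs and the asymptotic expansions. Measured against that cited proof, your plan is a faithful reconstruction of its skeleton in the regime $\rho>\delta$: the Taylor expansion in the group variable with the monomials $q_\alpha$ generating the difference operators $\Delta^{\alpha}$, the resulting terms $\Delta^{\alpha}\sigma\,X^{\alpha}_x\tau$ for the composition and $\Delta^{\alpha}X_x^{\alpha}(\sigma^*)$ for the adjoint, and the bookkeeping that each term drops the order by $(\rho-\delta)[\alpha]$ are all correct relative to the seminorms \eqref{seminorm}.

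There are, however, two genuine gaps. First, the heart of the cited proof is the control of the Taylor remainder through the kernel estimates for the classes $S^m_{\rho,\delta}$ (Section 5.4 of \cite{FischerRuzhanskyBook2015}), which quantify the behaviour of $k_x$ and its Taylor remainders at and away from the diagonal; you name this obstacle but supply no mechanism for it, and note that membership in the class is a statement about \emph{all} the two-sidedly weighted quantities $\Vert\pi(1+\mathcal{R})^{(\rho[\alpha]-\delta[\beta]-m-\gamma)/\nu}X_{x}^{\beta}\Delta^{\alpha}\sigma(x,\pi)\,\pi(1+\mathcal{R})^{\gamma/\nu}\Vert_{\textnormal{op}}$, so a crude size bound on the kernel cannot close the argument. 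Second, and more decisively, your fallback for the borderline case $\rho=\delta$ is wrong: the Calder\'on--Vaillancourt theorem (Theorem \ref{CVT}) yields only $\mathscr{B}(L^2(G))$-boundedness of an order-zero operator, and $L^2$-boundedness of $AB$ or $A^*$ gives no information about whether their symbols satisfy the seminorm estimates \eqref{seminorm} defining $S^{m+r}_{\rho,\rho}(G\times\widehat{G})$ and $S^{m}_{\rho,\rho}(G\times\widehat{G})$. In \cite{FischerRuzhanskyBook2015} the logical order is the opposite of the one you propose: the calculus (Section 5.5) is established before, and independently of, Calder\'on--Vaillancourt (Section 5.7); when $\rho=\delta$ every term of the expansion has the same order $m+r$ (resp.\ $m$), so one must estimate the full composed and adjoint symbols directly in every seminorm, again via the kernel estimates. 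Since the present paper invokes Theorem \ref{calculus} precisely in the case $\rho=\delta$ (the adjoint step in the proof of Theorem \ref{LinftyBMOCardonaDelgadoRuzhansky}, cf.\ Remark \ref{remarkestimate}), your proposal as written covers only $\rho>\delta$ and leaves unproven the case the paper actually needs.
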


In particular, the following theorem, which is an extension of the classical Calder\'on-Vaillancourt theorem, shows that operators with order zero and of $(\rho,\rho)$-type are bounded on $L^2(G).$ 
\begin{theorem}\label{CVT}
Let $G$ be a graded Lie group of homogeneous dimension $Q$ and let $0\leq   \rho < 1.$  Let $A:C^\infty(G)\rightarrow\mathscr{D}'(G)$ be a pseudo-differential operator with symbol $\sigma\in S^{0}_{\rho,\rho}(G\times \widehat{G} ).$ Then $A$ admits a bounded extension on $L^2(G),$  with the operator norm bounded, modulo by  a constant factor,  by a semi-norm $\Vert \sigma\Vert_{k,S^{0}_{\rho,\rho}}$ for some integer $k\in\mathbb{N}_0.$
\end{theorem}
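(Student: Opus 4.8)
The plan is to establish the $L^2(G)$-bound by the Cotlar--Stein almost orthogonality lemma, since at the borderline exponent $\rho=\delta$ the symbolic calculus of Theorem \ref{calculus} gives no gain of order and hence the usual parametrix or approximate square-root argument is unavailable. The scheme has two stages: a reduction of the general case $0\le\rho<1$ to the case $\rho=0$ by dilation rescaling of dyadic frequency shells, followed by a direct almost orthogonality argument at $\rho=0$. For the decomposition I fix $\psi\in C_c^\infty(0,\infty)$ and $\psi_0\in C_c^\infty[0,\infty)$ forming a dyadic partition $\psi_0(\lambda)+\sum_{\ell\ge 1}\psi(2^{-\ell\nu}\lambda)=1$ on the spectrum of $\mathcal{R}$, set $\Delta_\ell:=\psi_\ell(\mathcal{R})$ by the functional calculus (so that $\sum_\ell\Delta_\ell=I$ on $L^2(G)$), and write $A=\sum_\ell A_\ell$ with $A_\ell:=A\Delta_\ell$. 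The symbol of $A_\ell$ is $\sigma(x,\pi)\psi_\ell(\pi(\mathcal{R}))$, which is supported where $\pi(\mathcal{R})\sim 2^{\ell\nu}$.

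For the reduction I would rescale the $\ell$-th shell to unit frequency by the dilation $D_{2^\ell}$. The homogeneity identity \eqref{FischerFermanian-Kammerer2017}, $\Delta^\alpha(\sigma_{r\cdot})(\pi)=r^{[\alpha]}(\Delta^\alpha\sigma)(r\cdot\pi)$, together with Lemma \ref{dilationsrepre} for the spectral calculus, turns the $(\rho,\rho)$-estimates \eqref{seminorm} on a shell into $(0,0)$-estimates that are bounded, uniformly in $\ell$, by a fixed seminorm $\Vert\sigma\Vert_{k,S^0_{\rho,\rho}}$; this is precisely where the matching of the spatial scale $2^{-\ell\rho}$ with the frequency scale $2^{\ell\rho}$ encoded in the $(\rho,\rho)$-condition is exploited. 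Thus each $A_\ell$ is controlled by the $\rho=0$ estimate applied at unit scale, giving $\Vert A_\ell\Vert_{\mathscr{B}(L^2(G))}\le M:=C\Vert\sigma\Vert_{k,S^0_{\rho,\rho}}$ uniformly in $\ell$. To sum the shells by Cotlar--Stein I note that one of the two conditions is automatic: $A_\ell A_{\ell'}^*=A\,(\psi_\ell\psi_{\ell'})(\mathcal{R})\,A^*=0$ whenever $|\ell-\ell'|\ge 2$, by disjointness of the annular supports of $\psi_\ell$ and $\psi_{\ell'}$, so that $\sum_{\ell'}\Vert A_\ell A_{\ell'}^*\Vert^{1/2}\le 3M$. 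The remaining products $A_\ell^*A_{\ell'}=\Delta_\ell(A^*A)\Delta_{\ell'}$ require genuine decay $\Vert A_\ell^*A_{\ell'}\Vert_{\mathscr{B}(L^2(G))}\lesssim 2^{-\varepsilon|\ell-\ell'|}M^2$, obtained by testing the order-zero operator $A^*A\in\textnormal{Op}(S^0_{\rho,\rho})$ (legitimate by Theorem \ref{calculus}) against spectral projections at separated scales.

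The base case $\rho=0$, to which everything is reduced, would itself be handled by a Cotlar--Stein decomposition in the \emph{spatial} variable: for a symbol with all $x$-derivatives and all differences $\Delta^\alpha$ bounded on $H_\pi$, the convolution kernel $k_x$ and its left-invariant derivatives decay away from the identity, and translating this into operator norms through the Plancherel theorem on $\widehat G$ yields the off-diagonal bounds needed to glue unit-scale spatial pieces. The main obstacle, and the technical heart of the whole argument, is the noncommutativity of $G$: the spectral factors $\pi(\mathcal{R})$ do not commute with the representation factors $\pi(x)$ occurring in the kernel $K_A(x,y)=k_x(y^{-1}x)$, so both the rescaling of shells and the two almost orthogonality estimates demand careful commutator control, carried out by means of the Leibniz rule \eqref{differenceespcia;l} for $\Delta^\alpha$ and a Taylor expansion on the group. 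Checking that throughout only a fixed finite number $k$ of the seminorms $p_{\alpha,\beta,0,0}(\sigma)$ enters, so that the final bound is indeed $C\Vert\sigma\Vert_{k,S^0_{\rho,\rho}}$, is the delicate bookkeeping that this scheme must respect.
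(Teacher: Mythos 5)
Your plan cannot be compared line-by-line with a proof in the paper, because the paper does not prove Theorem \ref{CVT} at all: it imports it as Proposition 5.7.14 of \cite{FischerRuzhanskyBook2015}, noting only that the proof there rests on Hulanicki's theorem and the Calder\'on--Zygmund theory of that book, and recording in Remark \ref{remarkoffinite} the seminorm bookkeeping of the underlying case $\rho=0$ (Proposition 5.7.7 of the same reference). Your scheme --- dyadic spectral decomposition $A=\sum_\ell A\,\psi_\ell(\mathcal{R})$, rescaling by dilations so that the $(\rho,\rho)$-estimates on each shell become uniform $(0,0)$-estimates, then Cotlar--Stein to resum --- is essentially the strategy behind the cited proof, and your observation that $A_\ell A_{\ell'}^*=A(\psi_\ell\psi_{\ell'})(\mathcal{R})A^*$ vanishes for $|\ell-\ell'|\geq 2$ is correct.

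However, two steps are genuinely incomplete and one is circular as written. First, the base case $\rho=0$ is the entire analytic content of the theorem, and you dispatch it in one sentence; on a graded group even the unit-scale kernel bounds you invoke require Hulanicki's theorem to produce Schwartz kernels for $\psi(\mathcal{R})$, and the explicit count in Remark \ref{remarkoffinite} ($[\alpha]\leq rp_0$, $[\beta]\leq r\nu+[\tfrac{Q}{2}]$ with $rp_0>Q+1$) shows how much quantitative work that step absorbs. Second, your decay bound $\Vert A_\ell^*A_{\ell'}\Vert\lesssim 2^{-\varepsilon|\ell-\ell'|}$ is justified by ``testing $A^*A\in\textnormal{Op}(S^0_{\rho,\rho})$ against spectral projections,'' but this presupposes an $L^2$ operator-norm estimate for order-zero $(\rho,\rho)$-operators sandwiched between spectral cutoffs --- exactly the boundedness being proved; Theorem \ref{calculus} gives you the symbol class of $A^*A$, not any $L^2$ bound. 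The off-diagonal decay must instead be extracted from kernel estimates on $\Delta_\ell$ and on the pieces $A_{\ell'}$ (Schur tests and integration by parts on the group), independently of the theorem. Finally, the dilation in your reduction should be $D_{2^{\ell\rho}}$, not $D_{2^{\ell}}$: conjugating by the full dilation sends the shell to unit frequency but, by \eqref{FischerFermanian-Kammerer2017}, inflates the difference estimates by the factor $2^{\ell(1-\rho)[\alpha]}$, destroying uniformity in $\ell$; the partial rescaling moves the shell to frequency $\approx 2^{\ell(1-\rho)}$ --- harmless, since a $(0,0)$-bound is frequency-global --- and only then does the matching of scales you correctly invoke ($2^{-\ell\rho}$ in space against $2^{\ell\rho}$ in frequency, which is where $\delta=\rho$ enters) yield bounds uniform in $\ell$.
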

 Theorem \ref{CVT} is indeed, Proposition 5.7.14 of \cite{FischerRuzhanskyBook2015}. A fundamental tool for its proof is Hulanicki Theorem (see \cite[page 251]{FischerRuzhanskyBook2015}) and the corresponding Calder\'on-Zygmund theory developed there. The integer $k$ in Theorem \ref{CVT} can be estimated by a proportional factor to $Q.$  Theorem
\ref{CVT} is sharp in the sense that for $\rho=1,$ and $G=\mathbb{R}^n,$ the class $S^0_{1,1}(\mathbb{R}^n\times \mathbb{R}^n)$ contains symbols whose pseudo-differential operators are unbounded on $L^p(\mathbb{R}^n)$ for all $1<p<\infty,$ (see Taylor \cite{Taylorbook1981}).
\begin{remark}\label{remarkoffinite}
If $\rho=0$ in Theorem \ref{CVT}, the operator norm of $A$ can be bounded by the supremum of all seminorms $p_{\alpha,\beta,0,0}(\sigma),$ where $ [\alpha] \leq rp_0,$ $ [\beta] \leq r\nu+[\frac{Q}{2}],$ $|\gamma|\leq r\nu,$ where $\nu$ is the degree of homogeneity of $\mathcal{R},$ $p_0/2$ is the smallest positive integer divisible by $\nu_1,\nu_2,\cdots ,\nu_n,$ and $r\in\mathbb{N}_0$ is the smallest integer such that $rp_0>Q+1,$ (see Proposition 5.7.7 of \cite{FischerRuzhanskyBook2015}).
\end{remark}
Theorem 5.7.1 of \cite{FischerRuzhanskyBook2015} consists of the non-commutative Calder\'on-Vaillancourt Theorem (Theorem \ref{CVT}) together with  the following end-point  case $\rho=1.$ 
\begin{theorem}\label{1delta}
Let $G$ be a graded Lie group of homogeneous dimension $Q$ and let  $0\leq \delta< 1.$  Let $A:C^\infty(G)\rightarrow\mathscr{D}'(G)$ be a pseudo-differential operator with symbol $\sigma\in S^{0}_{1,\delta}(G\times \widehat{G} ).$ Then $A$ admits a bounded extension on $L^2(G),$  with the operator norm bounded, modulo by  a constant factor,  by a semi-norm $\Vert \sigma\Vert_{k,S^{0}_{1,\delta}}$ for some integer $k\in\mathbb{N}_0.$
\end{theorem}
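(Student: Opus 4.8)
The plan is to reduce the statement to the non-commutative Calderón–Vaillancourt theorem (Theorem \ref{CVT}) by establishing the continuous inclusion
\[
S^0_{1,\delta}(G\times\widehat{G})\hookrightarrow S^0_{\delta,\delta}(G\times\widehat{G}),\qquad 0\le\delta<1,
\]
and then invoking Theorem \ref{CVT} with $\rho=\delta<1$. The conceptual point is that passing from $\rho=1$ to $\rho=\delta$ only \emph{weakens} the decay required from the difference operators by the nonnegative amount $(1-\delta)[\alpha]$, and this excess decay available in the class $S^0_{1,\delta}$ can be absorbed into a harmless contraction factor of the form $\pi(1+\mathcal{R})^{-s}$ with $s\ge 0$.

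First I would record the one spectral fact that is needed: since $\mathcal{R}$ is a positive Rockland operator, each $\pi(\mathcal{R})$ is positive and self-adjoint, so that $\pi(1+\mathcal{R})=\mathrm{Id}_{H_\pi}+\pi(\mathcal{R})\ge \mathrm{Id}_{H_\pi}$, and hence
\[
\Vert\pi(1+\mathcal{R})^{-s}\Vert_{\mathrm{op}}\le 1,\qquad s\ge 0,\ \pi\in\widehat{G}.
\]
Next comes the key algebraic step, carried out for fixed $\alpha,\beta\in\mathbb{N}_0^n$. Writing $T=T(x,\pi):=X_x^\beta\Delta^\alpha\sigma(x,\pi)$ and using that powers of the single self-adjoint operator $\pi(1+\mathcal{R})$ commute and add, I factor the $(\delta,\delta)$-weight as
\[
\pi(1+\mathcal{R})^{\frac{\delta[\alpha]-\delta[\beta]}{\nu}}
=\pi(1+\mathcal{R})^{-\frac{(1-\delta)[\alpha]}{\nu}}\,\pi(1+\mathcal{R})^{\frac{[\alpha]-\delta[\beta]}{\nu}}.
\]
Taking operator norms, using submultiplicativity and the contraction bound with $s=(1-\delta)[\alpha]/\nu\ge 0$, the definition \eqref{seminorm} of the seminorms at $\gamma=0$, $m=0$ gives
\[
p^{(\delta,\delta)}_{\alpha,\beta,0,0}(\sigma)
=\Big\Vert\pi(1+\mathcal{R})^{\frac{\delta[\alpha]-\delta[\beta]}{\nu}}T\Big\Vert_{\mathrm{op}}
\le\Big\Vert\pi(1+\mathcal{R})^{\frac{[\alpha]-\delta[\beta]}{\nu}}T\Big\Vert_{\mathrm{op}}
=p^{(1,\delta)}_{\alpha,\beta,0,0}(\sigma),
\]
where the superscript records the class relative to which the seminorm is computed. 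By the equivalence of the three membership conditions recalled after \eqref{seminorm} (it suffices to control the seminorms with $\gamma=0$), this shows $\sigma\in S^0_{\delta,\delta}(G\times\widehat{G})$, and taking maxima over $[\alpha]+[\beta]\le k$ yields $\Vert\sigma\Vert_{k,S^0_{\delta,\delta}}\le\Vert\sigma\Vert_{k,S^0_{1,\delta}}$ for every $k$.

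Finally I would apply Theorem \ref{CVT} to $\sigma\in S^0_{\delta,\delta}(G\times\widehat{G})$, which is legitimate precisely because $0\le\delta<1$; it gives that $A$ extends boundedly to $L^2(G)$ with $\Vert A\Vert_{\mathscr{B}(L^2(G))}\lesssim\Vert\sigma\Vert_{k,S^0_{\delta,\delta}}\le\Vert\sigma\Vert_{k,S^0_{1,\delta}}$ for some $k\in\mathbb{N}_0$, which is the asserted estimate. The argument is short, and the only place asking for care is the verification that the inclusion is genuinely continuous at the level of the finite-order seminorms $\Vert\cdot\Vert_{k,\cdot}$ used by Theorem \ref{CVT}; beyond the contraction estimate there is nothing deeper. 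It is worth emphasizing that the reasoning degenerates exactly at $\delta=1$: there the excess decay $(1-\delta)[\alpha]$ vanishes, the target class collapses to $S^0_{1,1}$, and Theorem \ref{CVT} is no longer available, in accordance with the known $L^p$-unboundedness of symbols in $S^0_{1,1}$.
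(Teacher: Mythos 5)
Your proof is correct, but note that the paper itself offers no argument for Theorem \ref{1delta}: the statement is quoted directly from Theorem 5.7.1 of \cite{FischerRuzhanskyBook2015}, whose scope is the whole range $0\leq\delta\leq\rho\leq 1,$ $\delta\neq 1.$ Your reduction --- the continuous embedding $S^0_{1,\delta}(G\times\widehat{G})\hookrightarrow S^{0}_{\delta,\delta}(G\times\widehat{G})$ followed by an application of Theorem \ref{CVT} with $\rho=\delta<1$ --- is exactly the mechanism by which the end-point case follows there from the Calder\'on--Vaillancourt case, so in effect you have made explicit the proof hiding behind the paper's citation, using only ingredients the paper already records. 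The two points that genuinely needed care are both handled correctly. First, in the operator-valued setting the weight does not commute with $T=X_x^\beta\Delta^\alpha\sigma(x,\pi),$ but your factorization
\begin{equation*}
\pi(1+\mathcal{R})^{\frac{\delta[\alpha]-\delta[\beta]}{\nu}}T
=\pi(1+\mathcal{R})^{-\frac{(1-\delta)[\alpha]}{\nu}}\,\pi(1+\mathcal{R})^{\frac{[\alpha]-\delta[\beta]}{\nu}}T
\end{equation*}
composes only powers of the single self-adjoint operator $\pi(1+\mathcal{R}),$ and since $\pi(\mathcal{R})\geq 0$ forces $\pi(1+\mathcal{R})\geq \mathrm{Id}_{H_\pi},$ the left factor is a contraction; no commutation with the symbol is ever invoked. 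Second, membership in $S^{0}_{\delta,\delta}(G\times\widehat{G})$ requires the seminorms \eqref{seminorm} for all $\gamma\in\mathbb{R},$ and you correctly appeal to the equivalence recalled after \eqref{seminorm} (Theorem 5.5.20 of \cite{FischerRuzhanskyBook2015}) to reduce this to $\gamma=0,$ which is also the only case entering the quantity $\Vert\cdot\Vert_{k,S^{0}_{\rho,\rho}}$ used in Theorem \ref{CVT}; the resulting monotone bound $\Vert\sigma\Vert_{k,S^{0}_{\delta,\delta}}\leq\Vert\sigma\Vert_{k,S^{0}_{1,\delta}}$ for every $k$ then yields the asserted operator-norm estimate, and your closing observation that the argument collapses precisely at $\delta=1$ is consistent with the known unboundedness phenomena for $S^{0}_{1,1}.$
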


\subsection{$L^\infty$-$BMO$ and $H^1$-$L^1$ boundedness of pseudo-differential operators}

We will use the following variable change theorem on $\widehat{G},$ (see \cite{FischerRuzhanskyBook2015} or Fischer and Fermanian-Kammerer \cite[page 9]{FischerFermanian-Kammerer2017}).
\begin{equation}\label{elcambio}
\int_{\widehat{G}}{F}(r\cdot\pi)d\pi=r^{-Q}\int_{\widehat{G}}{F}(\pi)d\pi,
\end{equation}and in particular the following  useful consequence of \eqref{elcambio},
\begin{equation}\label{IdentityDilaG}
   \Vert F(r\cdot \pi)\Vert_{L^2(\widehat{G})}= r^{-\frac{Q}{2}}\Vert F( \pi)\Vert_{L^2(\widehat{G})},
\end{equation} where we have denoted by $L^2(\widehat{G}),$ the Hilbert-Schmidt-$L^2(\widehat{G})$ space defined as the completion of $\mathscr{S}(\widehat{G}),$ by the norm
\begin{equation}
    \Vert F(\pi) \Vert_{L^2(\widehat{G})}:=\left(\int\limits_{\widehat{G}}\Vert F(\pi)\Vert^2_{\textnormal{HS}}d\pi\right)^{\frac{1}{2}},\,\,\,F\in \mathscr{S}(\widehat{G}):=\mathscr{F}_G(\mathscr{S}(G)). 
\end{equation}
Let us recall that, for every $r>0,$ and $\pi\in\widehat{G},$ the representation $r\cdot\pi\in \widehat{G}$ is defined by the action $r\cdot\pi(x)=\pi(D_r(x)).$
  For our further analysis we  need the following technical lemma.
\begin{lemma}\label{compare}
Let $G$ be a graded Lie group and let $\mathcal{R}$ be a positive Rockland operator on $G.$ If $a,b\geq 0,$ then we have
\begin{equation}\label{ineqoper}
    \int\limits_{\widehat{G}}\Vert \pi(a+\mathcal{R})^{-b}\tau(\pi)\Vert_{\textnormal{HS}}^2d\pi\lesssim \int\limits_{\widehat{G}}\Vert \pi(\mathcal{R})^{-b}\tau(\pi)\Vert_{\textnormal{HS}}^2d\pi 
\end{equation} in the sense that if the integral in  right hand-side is finite, then the inequality \eqref{ineqoper} holds true.
\end{lemma}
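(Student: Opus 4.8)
The plan is to reduce the stated inequality to a uniform (over $\pi\in\widehat{G}$) operator-norm bound obtained from the spectral calculus of $\pi(\mathcal{R})$. Fix a nontrivial $\pi\in\widehat{G}$; as recalled in Remark \ref{symbolremark}, the operator $\pi(\mathcal{R})$ is positive, self-adjoint and diagonal in a basis $B_\pi=\{e_{\pi,k}\}_{k=1}^{\infty}\subset H_\pi^\infty$ with eigenvalues $\lambda_{\pi,k}$. These eigenvalues are strictly positive: this is precisely the Rockland (injectivity) condition for nontrivial $\pi$, while the trivial representation carries no Plancherel mass and is discarded. Since $\pi(a+\mathcal{R})=a\,\textnormal{Id}+\pi(\mathcal{R})$ is itself a function of $\pi(\mathcal{R})$, both $\pi(a+\mathcal{R})^{-b}$ and $\pi(\mathcal{R})^{\pm b}$ are simultaneously diagonal in $B_\pi$, defined by the functional calculus.

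First I would factor, on the algebraic span of the $e_{\pi,k}$,
\[
\pi(a+\mathcal{R})^{-b}\tau(\pi)=M_\pi\,\pi(\mathcal{R})^{-b}\tau(\pi),\qquad M_\pi:=\pi(a+\mathcal{R})^{-b}\,\pi(\mathcal{R})^{b},
\]
and observe that $M_\pi=g(\pi(\mathcal{R}))$ is the functional calculus applied to $g(\lambda)=\big(\lambda/(a+\lambda)\big)^{b}$. Because $a,b\geq0$ and $\lambda>0$ we have $0<g(\lambda)\leq1$, hence $\Vert M_\pi\Vert_{\textnormal{op}}\leq\Vert g\Vert_{L^\infty(0,\infty)}\leq1$, uniformly in $\pi$. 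Applying the ideal inequality $\Vert M_\pi S\Vert_{\textnormal{HS}}\leq\Vert M_\pi\Vert_{\textnormal{op}}\Vert S\Vert_{\textnormal{HS}}$ with $S=\pi(\mathcal{R})^{-b}\tau(\pi)$ and integrating against $d\pi$ gives
\[
\int_{\widehat{G}}\Vert\pi(a+\mathcal{R})^{-b}\tau(\pi)\Vert_{\textnormal{HS}}^2\,d\pi\leq\int_{\widehat{G}}\Vert\pi(\mathcal{R})^{-b}\tau(\pi)\Vert_{\textnormal{HS}}^2\,d\pi,
\]
which is \eqref{ineqoper} with implied constant $1$; finiteness of the right-hand side is what guarantees all quantities are finite.

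The difficulties here are bookkeeping rather than conceptual. Since $\pi(\mathcal{R})^{\pm b}$ are unbounded when $b>0$, the hard part will be justifying the factorization without composing unbounded operators outside their domains: I would perform it on the algebraic span of the eigenvectors (where each operator acts diagonally by scalars) and then recover the Hilbert-Schmidt norms by monotone convergence over finite-rank spectral truncations of $\pi(\mathcal{R})$. The uniformity of $\Vert M_\pi\Vert_{\textnormal{op}}\leq1$ over all of $\widehat{G}$---which is automatic because $g$ is independent of $\pi$---is exactly what lets the final integration be carried out. The hypothesis $a,b\geq0$ enters only through the bound $g\leq1$; if $a<0$ the ratio could exceed $1$ and the inequality would genuinely fail.
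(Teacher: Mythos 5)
Your proof is correct and is essentially the paper's own argument: both reduce the claim to the pointwise-in-$\pi$ estimate $\Vert \pi(a+\mathcal{R})^{-b}\tau(\pi)\Vert_{\textnormal{HS}}\leq \Vert \pi(\mathcal{R})^{-b}\tau(\pi)\Vert_{\textnormal{HS}}$ (constant $1$) via the spectral calculus of $\pi(\mathcal{R})$ and the scalar inequality $(a+\lambda)^{-b}\leq \lambda^{-b}$ for $\lambda>0$, $a,b\geq 0$, and then integrate over $\widehat{G}$. The paper writes this out by expanding the Hilbert--Schmidt norm through the spectral measure $dE_{\pi(\mathcal{R})}$ applied to each vector $\tau(\pi)e_{\pi,k}$, which is exactly your contraction $g(\pi(\mathcal{R}))$ with $g(\lambda)=(\lambda/(a+\lambda))^{b}$ combined with the Hilbert--Schmidt ideal inequality, so the two presentations differ only in bookkeeping.
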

\begin{proof}
It is sufficient to prove that for a.e. $\pi\in \widehat{G},$ 
\begin{equation}
     \Vert \pi(a+\mathcal{R})^{-b}\tau(\pi)\Vert_{\textnormal{HS}}^2\leq\Vert \pi(\mathcal{R})^{-b}\tau(\pi)\Vert_{\textnormal{HS}}^2.
\end{equation} For every $\pi\in \widehat{G},$ let us denote by $\{dE_{\pi(\mathcal{R})}(\lambda)\}_{\lambda>0}$ the spectral measure  associated to the operator $\pi(\mathcal{R}).$ If $B_{\pi}=\{e_{\pi,k}\}_{k=1}^\infty$ is a basis of the representation space $H_\pi,$ then,
\begin{align*}
    \Vert \pi(a+\mathcal{R})^{-b}\tau(\pi)\Vert_{\textnormal{HS}}^2&=\sum_{k=1}^\infty\left\Vert  \pi(a+\mathcal{R})^{-b} \tau(\pi)e_{\pi,k} \right\Vert_{H_\pi}^2\\
   &= \sum_{k=1}^\infty\left\Vert \int_{0}^\infty(a+\lambda)^{-b}dE_{\pi(\mathcal{R})}(\lambda)\tau(\pi)e_{\pi,k} \right\Vert_{H_\pi}^2\\
    &=\sum_{k=1}^\infty \int_{0}^\infty(a+\lambda)^{-2b}d\Vert E_{\pi(\mathcal{R})}(\lambda)\tau(\pi)e_{\pi,k} \Vert_{H_\pi}^2\\
    &\leq\sum_{k=1}^\infty \int_{0}^\infty\lambda^{-2b}d\Vert E_{\pi(\mathcal{R})}(\lambda)\tau(\pi)e_{\pi,k} \Vert_{H_\pi}^2\\
    &=\sum_{k=1}^\infty\left\Vert \int_{0}^\infty\lambda^{-b}dE_{\pi(\mathcal{R})}(\lambda)\tau(\pi)e_{\pi,k} \right\Vert_{H_\pi}^2\\
    &=\sum_{k=1}^\infty\left\Vert  \pi(\mathcal{R})^{-b} \tau(\pi)e_{\pi,k} \right\Vert_{H_\pi}^2\\
    &=\Vert \pi(\mathcal{R})^{-b}\tau(\pi)\Vert_{\textnormal{HS}}^2.
\end{align*}
Thus, the proof is complete.
\end{proof}
Now, we will  study the $L^\infty(G)$-boundedness for pseudo-differential operators with symbols absorbing projections compactly supported in the spectrum of $\mathcal{R}$ in the sense of the equality  \eqref{spectralabsorbsion}. This is a way of saying that the symbol of some pseudo-differential operator is compactly supported in the spectrum of $\mathcal{R},$ which in the Euclidean case agrees with the notion of symbols compactly supported in the frequency variables $\pi\in\widehat{G}$.

\begin{lemma}\label{FundamentallemmaI}
Let $G$ be a graded Lie group of homogeneous dimension $Q.$  Let $0\leq \delta,\rho\leq 1.$ Let $\sigma\in S^{-\frac{Q(1-\rho)}{2}}_{\rho,\delta}(G\times \widehat{G})$ be a symbol satisfying
\begin{equation}\label{spectralabsorbsion}
  \sigma(x,\pi)\psi_{j}(\pi(\mathcal{R}))=\sigma(x,\pi),
\end{equation} where $\psi_{j}(\lambda)=\psi_0(2^{-j}\lambda),$ for some test function $\psi_{0}\in C^\infty_0(0,\infty),$ and some fixed integer $j\in \mathbb{N}_0.$ Then $A=\textnormal{Op}(\sigma)$ extends to a bounded operator from $L^\infty(G)$ to $L^\infty(G),$ and for $\ell:=2\nu_0,$ where $\nu_0$ be the least common multiple of the weights  $\nu_1,\cdots,\nu_n,$ we have
\begin{equation*}
    \Vert  A\Vert_{\mathscr{B}(L^\infty(G))}\leq C \left(\sup_{\pi\in\widehat{G}, [\alpha_1] \leq \ell} \Vert [\Delta^{\alpha_1} \sigma(x,\pi)]\pi((1+\mathcal{R})^{\frac{ \frac{Q(1-\rho)}{2}+\rho [\alpha_1]  }{\nu}})  \Vert_{\textnormal{op}}     \right),
\end{equation*}
with the positive constant $C$  independent of $j$ and $\sigma.$ 
\end{lemma}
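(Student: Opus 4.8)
The plan is to control the $L^\infty$ operator norm through the right convolution kernel. By Remark~\ref{Linftyremark} it suffices to bound $\sup_{x\in G}\|k_x\|_{L^1(G)}$, where $k_x=\mathscr{F}_G^{-1}\sigma(x,\cdot)$; from now on $x$ is a frozen parameter. The spectral localization \eqref{spectralabsorbsion} fixes a natural length scale $t:=2^{-j/\nu}$ (since $\psi_j$ lives on $\lambda\sim 2^{j}$ and $\mathcal{R}$ is homogeneous of degree $\nu$), and, more importantly, a \emph{crossover scale} $\varrho:=2^{-j\rho/\nu}$. I would split $G=\{|z|\le\varrho\}\cup\{|z|>\varrho\}$ and estimate $\|k_x\|_{L^1}$ on each region by Cauchy--Schwarz, converting weighted $L^2$-norms of $k_x$ into $L^2(\widehat G)$-norms of difference operators of $\sigma$ via Plancherel.

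On the inner region I would write $\int_{|z|\le\varrho}|k_x|\,dz\le |B(0,\varrho)|^{1/2}\|k_x\|_{L^2(G)}=C\varrho^{Q/2}\|\sigma(x,\cdot)\|_{L^2(\widehat G)}$. Using \eqref{spectralabsorbsion} and $\|AB\|_{\mathrm{HS}}\le\|A\|_{\mathrm{op}}\|B\|_{\mathrm{HS}}$ with the factorization $\sigma(x,\pi)=[\sigma(x,\pi)\pi(1+\mathcal{R})^{s_0}]\,[\pi(1+\mathcal{R})^{-s_0}\psi_j(\pi(\mathcal{R}))]$, where $s_0=\tfrac{Q(1-\rho)}{2\nu}$, the first bracket is controlled by the $\alpha_1=0$ seminorm on the right-hand side of the claim, while the second is a spectral multiplier of $\mathcal{R}$. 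Its $L^2(\widehat G)$-norm is computed by dropping the ``$+1$'' via Lemma~\ref{compare} and then scaling: Lemma~\ref{dilationsrepre} writes $\psi_j(\pi(\mathcal{R}))$ as a dilation of $\psi_0(\pi(\mathcal{R}))$, and \eqref{IdentityDilaG} produces the factor $t^{-Q/2}=2^{jQ/(2\nu)}$. The decisive point is that the power $2^{-js_0}$ coming from $\pi(1+\mathcal{R})^{-s_0}$ and the factors $\varrho^{Q/2}$, $t^{-Q/2}$ combine to exponent zero \emph{precisely because} the order is the critical $m=\tfrac{Q(1-\rho)}{2}$, so the inner region is $O(1)$ uniformly in $j$.

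On the outer region I would insert the weight $|z|^{2\ell}$, which I bound below by $\sum_{[\alpha]=\ell}|z^\alpha|^2$; this is where $\ell$ must be a common multiple of the weights $\nu_1,\dots,\nu_n$ (hence the choice tied to $\nu_0$), so that the coordinate monomials $z^\alpha$ realize the homogeneous power. Then $\int_{|z|>\varrho}|z|^{-2\ell}\,dz\sim\varrho^{Q-2\ell}$ is finite once $2\ell>Q$, and by Plancherel $\||z|^\ell k_x\|_{L^2}$ equals $\big(\sum_{[\alpha]=\ell}\|\Delta^\alpha\sigma(x,\cdot)\|_{L^2(\widehat G)}^2\big)^{1/2}$. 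Each $\|\Delta^\alpha\sigma(x,\cdot)\|_{L^2(\widehat G)}$ is estimated as in the inner region, now with weight exponent $s_\alpha=\tfrac{Q(1-\rho)/2+\rho[\alpha]}{\nu}$ (exactly the weight appearing in the statement). To localize $\Delta^\alpha\sigma$ I would replace $\psi_0$ by a fattened cutoff $\widetilde\psi_0$ with $\widetilde\psi_0\psi_0=\psi_0$, expand $\Delta^\alpha(\sigma\,\widetilde\psi_j(\pi(\mathcal{R})))$ by the Leibniz rule \eqref{differenceespcia;l}, and note that $\Delta^{\alpha_2}$ falling on the smooth spectral multiplier only improves decay (the scaling identity \eqref{FischerFermanian-Kammerer2017} tracks the attendant powers of $t$). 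A bookkeeping of exponents shows the factors $\varrho^{(Q-2\ell)/2}$, $2^{-js_\alpha}$, $t^{-Q/2}$ and the gain $2^{-j\rho[\alpha]/\nu}$ again sum to zero, so the outer region is $O\big(\max_{[\alpha]\le\ell}(\text{seminorm})\big)$ uniformly in $j$.

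The hard part is exactly this uniformity in $j$: the ball volume, the Plancherel measure of the dyadic shell, and the resolvent weight each contribute powers of $2^{j}$ that blow up individually, and they cancel only because (i) the decay order is the critical $\tfrac{Q(1-\rho)}{2}$ and (ii) the spatial cut is placed at the crossover $\varrho=2^{-j\rho/\nu}$, using no differences below it and order $\ell$ above it. The subsidiary technical obstacles are the genuine (rather than exact) spectral localization of $\Delta^\alpha\sigma$, handled by the Leibniz expansion against $\widetilde\psi_j$, the passage from $(1+\mathcal{R})$ to $\mathcal{R}$ handled by Lemma~\ref{compare}, and the need for enough difference operators ($2\ell>Q$) to make the outer weight integrable, which is why one works with a multiple $\ell$ of the weights. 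Combining the two regions gives the stated bound on $\|A\|_{\mathscr B(L^\infty(G))}$ with $C$ independent of $j$ and of $\sigma$.
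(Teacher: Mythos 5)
Your proposal is correct and follows essentially the same route as the paper's proof: the reduction to $\sup_x\Vert k_x\Vert_{L^1(G)}$, the split at the crossover scale $2^{-j\rho}$ with Cauchy--Schwarz and Plancherel on each region, the insertion of the resolvent weights $\pi(1+\mathcal{R})^{\pm s}$, the scaling identities \eqref{IdentityDilaG} and \eqref{FischerFermanian-Kammerer2017}, Lemma \ref{compare} to drop the constant, and the Leibniz expansion with difference operators of homogeneous degree tied to $\nu_0$ (your weight $\sum_{[\alpha]=\ell}|z^\alpha|^2$ is the same device as the paper's $q(z)=\sum_j z_j^{2\nu_0/\nu_j}$ squared). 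Your fattened cutoff $\widetilde\psi_0$ is a harmless but unnecessary variant, since \eqref{spectralabsorbsion} is an exact identity and the paper applies the Leibniz rule directly to $\sigma\psi_j$.
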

\begin{proof}
Let us fix $j\in \mathbb{N}_0.$  Let $b=R^{-\rho},$ $R=2^j.$ In view of Remark \ref{Linftyremark}, we only need to prove that  \begin{equation*}
    \sup_{x\in G}\Vert k_x \Vert_{L^1(G)}\leq C,
\end{equation*} where $C$ is a positive constant independent of $R,$ and $k_x,$ as usually, is the right-convolution kernel of $A.$ First, let us split the $L^1(G)$-norm of $k_x$ as,
\begin{equation*}
    \int\limits_{G}|k_x(z)|dz= \int\limits_{|z|\leq b}|k_x(z)|dz+ \int\limits_{|z|> b}|k_x(z)|dz.
\end{equation*} By using the H\"older inequality we estimate the first integral as follows,

\begin{align*}
    \int\limits_{|z|\leq b}|k_x(z)|dz&\leq \left(\int\limits_{|z|\leq b}dz\right)^{\frac{1}{2}}\left(\int\limits_{|z|\leq b}|k_x(z)|^2dz\right)^{\frac{1}{2}}\\
    &\asymp R^{\frac{Q(-\rho)}{2}}\Vert k_{x}\Vert_{L^2(G)}
    \\
    &= 2^{-\frac{j\rho Q}{2}}\Vert k_{x}\Vert_{L^2(G)}.
\end{align*}The Plancherel theorem   and the definition of the right-convolution kernel: $k_{x}=\mathscr{F}_{G}^{-1}(\sigma(x,\cdot)),$ for every $x\in G,$ imply

\begin{align*}
   \Vert k_{x}\Vert_{L^2(G)} &=\left( \int\limits_{\widehat{G}}\Vert \sigma(x,\pi) \Vert_{\textnormal{HS}}^2d\pi \right)^\frac{1}{2}\\&=\left( \int\limits_{\widehat{G}}\Vert \sigma(x,\pi)\pi(1+\mathcal{R})^{\frac{Q(1-\rho)}{2\nu}})\pi(1+\mathcal{R})^{-\frac{Q(1-\rho)}{2\nu}} \Vert_{\textnormal{HS}}^2d\pi \right)^\frac{1}{2}\\
    &=\left( \int\limits_{\widehat{G}}\Vert \sigma(x,\pi)\psi_{j}(\pi(\mathcal{R}))\pi(1+\mathcal{R})^{\frac{Q(1-\rho)}{2\nu}})\pi(1+\mathcal{R})^{-\frac{Q(1-\rho)}{2\nu}} \Vert_{\textnormal{HS}}^2d\pi \right)^\frac{1}{2}\\
     &= \left( \int\limits_{\widehat{G}}\Vert \sigma(x,\pi)\pi(1+\mathcal{R})^{\frac{Q(1-\rho)}{2\nu}}\psi_{j}(\pi(\mathcal{R}))\pi(1+\mathcal{R})^{-\frac{Q(1-\rho)}{2\nu}} \Vert_{\textnormal{HS}}^2d\pi \right)^\frac{1}{2}\\
     &\leq \left(\sup_{\pi\in\widehat{G}}\Vert \sigma(x,\pi)\pi(1+\mathcal{R})^{\frac{Q(1-\rho)}{2\nu}} \Vert_{\textnormal{op}}\right) \left( \int\limits_{\widehat{G}}\Vert\psi_{j}(\pi(\mathcal{R}))\pi(1+\mathcal{R})^{-\frac{Q(1-\rho)}{2\nu}} \Vert_{\textnormal{HS}}^2d\pi \right)^\frac{1}{2}.
\end{align*}
Let us denote, for every $j\geq 1,$
$$
  I_j:=  \left( \int\limits_{\widehat{G}}\Vert\psi_{j}(\pi(\mathcal{R}))\pi(1+\mathcal{R})^{-\frac{Q(1-\rho)}{2\nu}} \Vert_{\textnormal{HS}}^2d\pi \right)^\frac{1}{2}.
$$
By using the identity \eqref{IdentityDilaG} for $p=2,$ and the functional calculus, we obtain

\begin{align*}
 I_j^2
= \int\limits_{\widehat{G}} &\Vert \psi_{j}(\pi(\mathcal{R}))\pi((1+\mathcal{R})^{-\frac{Q(1-\rho)}{2\nu}}) \Vert_{\textnormal{HS}}^2d\pi \\
= \int\limits_{\widehat{G}}&\Vert\psi_0(2^{-j}\pi(\mathcal{R}))(1+\pi(\mathcal{R}))^{-\frac{Q(1-\rho)}{2\nu}} \Vert_{\textnormal{HS}}^2d\pi \\
  = \int\limits_{\widehat{G}}&\Vert\psi_0((2^{-\frac{j}{\nu}}\cdot\pi)(\mathcal{R}))(1+(2^{\frac{j}{\nu}}\cdot(2^{-\frac{j}{\nu}}\cdot\pi))(\mathcal{R}))^{-\frac{Q(1-\rho)}{2\nu}} \Vert_{\textnormal{HS}}^2d\pi.
  \end{align*} Now, from \eqref{IdentityDilaG} we deduce,
  \begin{align*}
 I_j^2 =2^{\frac{jQ}{\nu}} \int\limits_{\widehat{G}}&\Vert\psi_0(\pi(\mathcal{R}))(1+(2^{\frac{j}{\nu}}\cdot\pi)(\mathcal{R}))^{-\frac{Q(1-\rho)}{2\nu}} \Vert_{\textnormal{HS}}^2d\pi 
  \\=2^{\frac{jQ}{\nu}} \int\limits_{\widehat{G}}&\Vert\psi_0(\pi(\mathcal{R}))(1+2^{j}\pi(\mathcal{R}))^{-\frac{Q(1-\rho)}{2\nu}} \Vert_{\textnormal{HS}}^2d\pi \\
   =2^{\frac{jQ}{\nu}} \int\limits_{\widehat{G}}&\left\Vert\int\limits_{0}^\infty\psi_0(\lambda)(1+2^{j}\lambda)^{-\frac{Q(1-\rho)}{2\nu}} dE_{\pi(\mathcal{R})}(\lambda)\right\Vert_{\textnormal{HS}}^2d\pi.\end{align*}
   Consequently,
   \begin{align*}
  I_j^2 =2^{\frac{jQ}{\nu}-\frac{jQ(1-\rho)}{\nu}} \int\limits_{\widehat{G}} &\left\Vert\int\limits_{0}^\infty\psi_0(\lambda)(2^{-j}+\lambda)^{-\frac{Q(1-\rho)}{2\nu}} dE_{\pi(\mathcal{R})}(\lambda)\right\Vert_{\textnormal{HS}}^2d\pi \\
  \lesssim2^{\frac{jQ}{\nu}-\frac{jQ(1-\rho)}{\nu}} \int\limits_{\widehat{G}} &\left\Vert\int\limits_{0}^\infty\psi_0(\lambda)\lambda^{-\frac{Q(1-\rho)}{2\nu}} dE_{\pi(\mathcal{R})}(\lambda)\right\Vert_{\textnormal{HS}}^2d\pi \\
  = 2^{\frac{jQ\rho}{\nu}} \int\limits_{\widehat{G}}&\Vert\psi_0(\pi(\mathcal{R}))\pi(\mathcal{R})^{-\frac{Q(1-\rho)}{2\nu}} \Vert_{\textnormal{HS}}^2d\pi .
\end{align*}  
Note that the $L^2(\widehat{G})$-norm of  $\pi\mapsto H(\pi):=[\psi_{0}(\pi (\mathcal{R})) \pi(\mathcal{R})^{-\frac{Q(1-\rho)}{2\nu }}],$ is finite. Indeed, the smooth function  $\psi_{0}$ has compact support in $(0,\infty),$ and $\lambda=0$ is an isolated point for the spectrum of $H(\pi)$  (see Geller \cite{Geller1983} or \cite[Section 3.2.8]{FischerRuzhanskyBook2015}). So we  conclude that
\begin{equation*}
    I_j\lesssim 2^{\frac{jQ\rho}{2\nu}}=R^{\frac{ Q\rho}{2\nu}}.
\end{equation*}
This analysis  allows us to deduce that
\begin{align*}
     \left(\int\limits_{|z|\leq b}dz\right)^{\frac{1}{2}}\left(\int\limits_{|z|\leq b}|k_x(z)|^2dz\right)^{\frac{1}{2}}\asymp 2^{\frac{jQ(-\rho)}{2}}2^{\frac{ jQ\rho}{2\nu}}\leq2^{\frac{jQ(-\rho)}{2\nu}}2^{\frac{ jQ\rho}{2\nu}} =1,
\end{align*} and consequently we estimate
\begin{equation*}
    \int\limits_{|z|\leq b}|k_x(z)|dz=O(1).
\end{equation*}
To estimate the integral $\int\limits_{|z|> b}|k_x(z)|dz,$ we will use a suitable difference operator. Let $\nu_0$ be the least common multiple of $\nu_1,\cdots,\nu_n.$ Let $\Delta_q$ be  the difference operator associated to  $ q(x)=\sum_{j=1}^{n}x_{j}^\frac{2\nu_0}{\nu_j}.$ Then,
\begin{equation}
    \Delta_q:=\sum_{j=1}^{n}\Delta^{\alpha(j)},
\end{equation} where $\alpha(j)\in \mathbb{N}_0^n,$ is defined by $\alpha(j):=\frac{2\nu_0}{\nu_j}e_{j},$ with $\{e_{j}\}_{j=1}^n,$ being the canonical basis of $\mathbb{R}^n.$ Since, every difference operator $\Delta^{\alpha(j)},$ satisfies the Leibniz rule \eqref{differenceespcia;l}, the Leibniz rule for  $\Delta_q$ takes the form
\begin{equation}\label{differenceespcia;l22}
  \Delta_q(\tau_1\tau_2)=\sum_{j=1}^{n}\sum_{\alpha(j)_1+\alpha(j)_2=\alpha(j)}c_{\alpha(j)_1,\alpha(j)_2}\Delta^{\alpha(j)_1}(\tau_1)\Delta^{\alpha(j)_2}(\tau_2),\,\,\,\,\,\tau_1,\tau_2\in L^\infty_{a,b}(\widehat{G}).
\end{equation}

The function  $q$ is homogeneous of order $2\nu_0.$  So, for $\ell=2\nu_0$  we  observe that, 
\begin{align*}
    \int\limits_{|z|> b}|k_x(z)|dz &\leq \left(\int\limits_{|z|> b}q(z)^{-2}dz\right)^{\frac{1}{2}}\left(\int\limits_{|z|> b}|q(z)k_x(z)|^2dz\right)^{\frac{1}{2}}\\
    &\lesssim \left(\int\limits_{|z|> b}|z|^{-2\ell}dz\right)^{\frac{1}{2}}\left(\int\limits_{G}|q(z)k_x(z)|^2dz\right)^{\frac{1}{2}}\\
    &= b^{\frac{Q-2\ell}{2}}\left(\int\limits_{\widehat{G}}\Vert \Delta_q\sigma(x,\pi)\Vert^2_{\textnormal{HS}}d\pi\right)^{\frac{1}{2}}=b^{\frac{Q}{2}-\ell}\Vert \Delta_q \sigma(x,\pi)\Vert_{L^2(\widehat{G})}\\
    &=2^{j\rho(\frac{Q}{2}-\ell)}\Vert \Delta_q \sigma(x,\pi)\Vert_{L^2(\widehat{G})}.  
\end{align*}
Denoting $\psi_j(\pi)=\psi_j(\pi(\mathcal{R})),$ and using  the Leibniz rule \eqref{differenceespcia;l22},  we can find a finite family of difference operators $\Delta^{\alpha(j)_1}$ and $\Delta^{\alpha(j)_2}$, associated to the functions $q_{\alpha(j)_i}=x^{\alpha_i},$ for every pair $(\alpha(j)_1,\alpha(j)_2)$ such that $\alpha(j)_1+\alpha(j)_2=\alpha(j),$ and  we can estimate 
\begin{align*}
   & \Vert \Delta_q \sigma(x,\pi)\Vert_{L^2(\widehat{G})}  =\Vert \Delta_q [\sigma(x,\pi)\psi_{j}(\pi)]\Vert_{L^2(\widehat{G})}\\
    &\leq\sum_{k=1}^n \sum_{\alpha(k)_1+\alpha(k)_2=\alpha(k)}C_{\alpha(k)_1,\alpha(k)_2}\Vert [\Delta^{\alpha(k)_1} \sigma(x,\pi)][[\Delta^{\alpha(k)_2}\psi_{j}](\pi)]\Vert_{L^2(\widehat{G})}\\
     &\leq\sum_{k=1}^n \sum_{\alpha(k)_1+\alpha(k)_2=\alpha(k)}C_{\alpha(k)_1,\alpha(k)_2}\Vert [\Delta^{\alpha(k)_1} \sigma(x,\pi)]\pi((1+\mathcal{R})^{\frac{ \frac{Q(1-\rho)}{2}+\rho [\alpha(k)_1]  }{\nu}}) \\
    & \hspace{5cm}\pi((1+\mathcal{R})^{-\frac{ \frac{Q(1-\rho)}{2}+\rho [\alpha(k)_1]   }{\nu}}) [\Delta^{\alpha(k)_2}\psi_{j}](\pi))\Vert_{L^2(\widehat{G})}\\
    &\lesssim\left(\sup_{\pi\in\widehat{G}, [\alpha_1] \leq \ell} \Vert [\Delta^{\alpha_1} \sigma(x,\pi)]\pi((1+\mathcal{R})^{\frac{ \frac{Q(1-\rho)}{2}+\rho [\alpha_1]  }{\nu}})  \Vert_{\textnormal{op}}     \right)  \\
    &\hspace{2cm}\times\sum_{k=1}^n\sum_{\alpha(k)_1+\alpha(k)_2=\alpha(k)}\Vert\pi((1+\mathcal{R})^{-\frac{ \frac{Q(1-\rho)}{2}+\rho [\alpha(k)_1]  }{\nu}}) [\Delta^{\alpha(k)_2}\psi_{j}](\pi)\Vert_{L^2(\widehat{G})}.
\end{align*}For simplicity, let us denote
\begin{equation*}
    f_0:=\mathscr{F}_{G}^{-1}(\psi_0(\pi(\mathcal{R})),\,\,\,\sigma_{0}=\widehat{f}_0.
\end{equation*} Then,
\begin{equation*}
  \textnormal{  for  }  \sigma_{0_{(r)}}:=\{\sigma_{0}(r\cdot \pi):\pi\in\widehat{G}\}  ,\,\,\,f_{0_{(r)}}:=r^{-Q}f_0\circ D_{r}\textnormal{  we have  }\widehat{f}_{0_{(r)}}=\sigma_{0_{(r)}},
\end{equation*}for every $r>0.$
In particular, for $r=2^{-\frac{j}{\nu}},$
\begin{align*}
   \psi_j(\pi)\equiv\psi_{j}(\pi(\mathcal{R}))&=\psi_0(2^{-j}\pi(\mathcal{R}))= \psi_0((2^{-\frac{j}{\nu}}\cdot \pi)(\mathcal{R}))=\sigma_0(r\cdot \pi)\\
   &\equiv \sigma_{0_{(r)}}(\pi).
\end{align*}
By using the action of difference operators on the dilations of representations in the unitary dual (see \eqref{FischerFermanian-Kammerer2017}), we have
\begin{equation*}
    [\Delta^{\alpha(k)_2}\psi_{j}](\pi)=[\Delta^{\alpha(k)_2} \sigma_{0_{(r)}}](\pi)=r^{  [\alpha(k)_2]  }[\Delta^{\alpha(k)_2} \sigma_{0}](r\cdot \pi).
\end{equation*}
By keeping the notation $r=2^{-\frac{j}{\nu}},$ we have
\begin{align*}
     &\Vert\pi((1+\mathcal{R})^{-\frac{ \frac{Q(1-\rho)}{2}+\rho [\alpha(k)_1]   }{\nu}}) [\Delta^{\alpha(k)_2}\psi_{j}](\pi)\Vert_{L^2(\widehat{G})}\\&=\Vert(1+\pi(\mathcal{R}))^{-\frac{ \frac{Q(1-\rho)}{2}+\rho [\alpha(k)_1]   }{\nu}})     r^{  [\alpha(k)_2]  }[\Delta^{\alpha(k)_2} \sigma_{0}](r\cdot \pi)  \Vert_{L^2(\widehat{G})}
    \\
    &=\Vert (1+(r^{-1}\cdot r\cdot \pi)(\mathcal{R}))^{-\frac{ \frac{Q(1-\rho)}{2}+\rho [\alpha(k)_1]   }{\nu}}) r^{  [\alpha(k)_2]  }[\Delta^{\alpha(k)_2}\sigma_{0}](r\cdot \pi)\Vert_{L^2(\widehat{G})}
     \\&=r^{-\frac{Q}{2}+  [\alpha(k)_2]  }\Vert (1+(r^{-1}\cdot  \pi)(\mathcal{R}))^{-\frac{ \frac{Q(1-\rho)}{2}+\rho [\alpha(k)_1]   }{\nu}}) [\Delta^{\alpha(k)_2}\sigma_{0}]( \pi) \Vert_{L^2(\widehat{G})},
\end{align*}
where in the last line we have used the identity 
\eqref{IdentityDilaG} for $p=2.$ Again, by using the functional calculus we have
\begin{align*}
 &  \Vert (1+(r^{-1}\cdot   \pi)(\mathcal{R}))^{-\frac{ \frac{Q(1-\rho)}{2}+\rho [\alpha(k)_1]   }{\nu}}) [\Delta^{\alpha(k)_2}\sigma_{0}]( \pi) \Vert_{L^2(\widehat{G})}\\
   &=\Vert (1+  \pi(r^{-\nu}\mathcal{R}))^{-\frac{ \frac{Q(1-\rho)}{2}+\rho [\alpha(k)_1]   }{\nu}}) [\Delta^{\alpha(k)_2}\sigma_{0}]( \pi) \Vert_{L^2(\widehat{G})}\\
    &=\left\Vert \int\limits_{0}^\infty(1+(r^{-\nu}\lambda))^{-\frac{ \frac{Q(1-\rho)}{2}+\rho [\alpha(k)_1]   }{\nu}})dE_{\pi(\mathcal{R})}(\lambda) [\Delta^{\alpha(k)_2}\sigma_{0}]( \pi) \right\Vert_{L^2(\widehat{G})}\\
    &= r^{  \frac{Q(1-\rho)}{2}+\rho [\alpha(k)_1]    }   \left\Vert \int\limits_{0}^\infty(r^{\nu}+\lambda)^{-\frac{ \frac{Q(1-\rho)}{2}+\rho [\alpha(k)_1]   }{\nu}}dE_{\pi(\mathcal{R})}(\lambda) [\Delta^{\alpha(k)_2}\sigma_{0}]( \pi) \right\Vert_{L^2(\widehat{G})}.
\end{align*}
Consequently, we obtain
\begin{align*}
 &\Vert\pi((1+\mathcal{R})^{-\frac{ \frac{Q(1-\rho)}{2}+\rho [\alpha(k)_1]   }{\nu}}) [\Delta^{\alpha(k)_2}\psi_{j}(\pi(1+\mathcal{R}))^{\frac{1}{\nu}}]\Vert_{L^2(\widehat{G})}\\
  &=r^{ -\frac{Q}{2}+  [\alpha(k)_2]  + \frac{Q(1-\rho)}{2}+\rho [\alpha(k)_1]    }  \Vert \pi(r^{\nu}+\mathcal{R})^{-\frac{ \frac{Q(1-\rho)}{2}+\rho [\alpha(k)_1]   }{\nu}} [\Delta^{\alpha(k)_2}\sigma_{0}]( \pi)]]\Vert_{L^2(\widehat{G})} 
  \\
  &=r^{ -\frac{Q}{2}+  [\alpha(k)_2]  + \frac{Q(1-\rho)}{2}+\rho [\alpha(k)_1]    }  N_{\alpha(k)_1,\alpha(k)_2},
\end{align*}
   where $N_{\alpha(k)_1,\alpha(k)_2}$ satisfies
   \begin{align*}
     & \Vert \pi(r^{\nu}+\mathcal{R})^{-\frac{ \frac{Q(1-\rho)}{2}+\rho [\alpha(k)_1]   }{\nu}} [\Delta^{\alpha(k)_2}\sigma_{0}]( \pi)\Vert_{L^2(\widehat{G})} \\
     & \Vert \pi(2^{-j\nu}+\mathcal{R})^{-\frac{ \frac{Q(1-\rho)}{2}+\rho [\alpha(k)_1]   }{\nu}} [\Delta^{\alpha(k)_2}\sigma_{0}]( \pi)\Vert_{L^2(\widehat{G})} \\
    &\lesssim  \Vert \pi(\mathcal{R})^{-\frac{ \frac{Q(1-\rho)}{2}+\rho [\alpha(k)_1]   }{\nu}} [\Delta^{\alpha(k)_2}\sigma_{0}]( \pi)\Vert_{L^2(\widehat{G})}
 =:  N_{\alpha(k)_1,\alpha(k)_2} <\infty,
   \end{align*} where in the last line, we have used  \eqref{ineqoper} with $a=2^{-j\nu}$ and $b=\frac{ \frac{Q(1-\rho)}{2}+\rho [\alpha(k)_1]   }{\nu}$.
Observe that  $N_{\alpha(k)_1,\alpha(k)_2}$ is a finite number because it is the $L^2(\widehat{G})$ norm of the  function $$G_{\alpha(k)_1,\alpha(k)_2}(\pi):= \pi(\mathcal{R})^{-\frac{ \frac{Q(1-\rho)}{2}+\rho [\alpha(k)_1]   }{\nu}} [\Delta^{\alpha(k)_2}\sigma_{0}]( \pi),$$ defined on the unitary dual $\widehat{G}.$ That the $L^2(\widehat{G})$-norm of $[\Delta^{\alpha(k)_2}\sigma_{0}]$ is finite can be justified because $[\Delta^{\alpha(k)_2}\sigma_{0}]=\mathscr{F}^{-1}(x^{\alpha(k)_2}f_0)$ and $f_0=\mathscr{F}_{G}^{-1}(\psi_0(\pi(\mathcal{R})),$ with $\phi_0\in C^\infty_0(\mathbb{R}_0^+).$  So, we can estimate
\begin{align*}
    \Vert \Delta_q \sigma(x,\pi)\Vert_{L^2(\widehat{G})} \leq C  \sum_{k=1}^n\sum_{\alpha(k)_1+\alpha(k)_2=\alpha(k)}2^{-\frac{j}{\nu}( -\frac{Q}{2}+  [\alpha(k)_2]  + \frac{Q(1-\rho)}{2}+\rho [\alpha(k)_1]    )},
\end{align*}
with
\begin{equation*}
    C=\left(\sup_{\pi\in\widehat{G}, [\alpha(k)_1] \leq \ell} \Vert [\Delta^{\alpha(k)_1} \sigma(x,\pi)]\pi((1+\mathcal{R})^{\frac{ \frac{Q(1-\rho)}{2}+\rho [\alpha(k)_1]  }{\nu}})  \Vert_{\textnormal{op}}     \right).
\end{equation*}
Taking into account that
\begin{align*}
    2^{-\frac{j}{\nu}( -\frac{Q}{2}+  [\alpha(k)_2]  + \frac{Q(1-\rho)}{2}+\rho [\alpha(k)_1]    )}&=2^{-\frac{j}{\nu}( -\frac{Q}{2}\rho+\ell - [\alpha(k)_1]  (1-\rho)    )}\leq 2^{-\frac{j}{\nu}( -\frac{Q}{2}\rho+\ell -\ell (1-\rho)   )}\\&= 2^{-\frac{j}{\nu}( -\frac{Q}{2}\rho+\ell\rho    )}=2^{-\frac{j}{\nu}\rho( \ell -\frac{Q}{2}    )}\\
    &=2^{\frac{j}{\nu}\rho( \frac{Q}{2}-\ell    )},
\end{align*}
the preceding  analysis allows us to conclude that
\begin{align*}
     \int\limits_{|z|> b}|k_x(z)|dz &\lesssim  2^{j(-\rho)(\frac{Q}{2}-\ell)}\Vert \Delta_q \sigma(x,\pi)\Vert_{L^2(\widehat{G})}\lesssim  2^{\frac{j}{\nu}(-\rho)(\frac{Q}{2}-\ell)}\Vert \Delta_q \sigma(x,\pi)\Vert_{L^2(\widehat{G})},\\
    &\lesssim 2^{\frac{j}{\nu}(-\rho)(\frac{Q}{2}-\ell)}\times  2^{\frac{j}{\nu}\rho( \frac{Q}{2}-\ell    )}=1.
\end{align*}
Thus, the proof is complete.
\end{proof}
The following Lemma \ref{FundamentallemmaII}, will be useful in order to control the seminorms of the Littlewood-Paley decomposition applied to the symbol $\sigma,$ and in its proof we will use the notation $\pi':=s^{\frac{1}{\nu}}\cdot\pi\in \widehat{G},$ $s>0,$ for the respective change of variables on the unitary dual.

\begin{lemma}\label{FundamentallemmaII}
Let $G$ be a graded Lie group of homogeneous dimension $Q,$ and let $\varepsilon>0.$ Let $0\leq \delta,\rho\leq 1.$ Let $\sigma\in S^{-\varepsilon}_{\rho,\delta}(G\times \widehat{G}).$ Let $\eta$ be a smooth function supported in $\{\lambda:R\leq\lambda\leq 3R\},$ for some $R>1.$ Then for all $\alpha\in\mathbb{N}_0^n$ with $ [\alpha] \leq \ell,$ there exists $C>0,$ such that for every $s>0,$ we have 
\begin{equation*}
   \sup_{(x,\pi)\in G\times \widehat{G}}  \Vert  \pi((1+\mathcal{R})^{\frac{ \varepsilon+\rho [\alpha]  }{\nu}})\Delta^{\alpha} [\sigma(x,\pi)\eta(s\pi(\mathcal{R}))]     \Vert_{\textnormal{op}}\leq C     \Vert \sigma\Vert_{\ell,S^{-\varepsilon}_{\rho,\delta}}s^{\frac{\ell}{\nu}},
\end{equation*}
with the positive constant $C$  independent of $s,$ $R$ and $\sigma.$ 
\end{lemma}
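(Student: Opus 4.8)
The plan is to reduce the estimate, through a Leibniz expansion, to the product of a \emph{symbol factor} governed by the seminorms of $\sigma$ and a purely \emph{spectral factor} coming from the cut-off $\eta(s\pi(\mathcal R))$; the latter will carry the power $s^{\ell/\nu}$. First I would apply the Leibniz rule \eqref{differenceespcia;l} to write
\[
\Delta^{\alpha}[\sigma(x,\pi)\eta(s\pi(\mathcal R))]=\sum_{\alpha_1+\alpha_2=\alpha}c_{\alpha_1,\alpha_2}\,[\Delta^{\alpha_1}\sigma(x,\pi)]\,[\Delta^{\alpha_2}\eta(s\pi(\mathcal R))],
\]
a finite sum, and then estimate each summand separately. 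Since $[\cdot]$ is additive, $[\alpha_1]+[\alpha_2]=[\alpha]$, and this additivity is exactly what will let me distribute the weight $\pi(1+\mathcal R)^{(\varepsilon+\rho[\alpha])/\nu}$ between the two factors.

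Second, for a fixed pair $(\alpha_1,\alpha_2)$ I would insert $\pi(1+\mathcal R)^{-\rho[\alpha_2]/\nu}\pi(1+\mathcal R)^{\rho[\alpha_2]/\nu}$ and factorize
\[
\pi(1+\mathcal R)^{\frac{\varepsilon+\rho[\alpha]}{\nu}}[\Delta^{\alpha_1}\sigma]\,[\Delta^{\alpha_2}\eta]=\Big(\pi(1+\mathcal R)^{\frac{\varepsilon+\rho[\alpha]}{\nu}}[\Delta^{\alpha_1}\sigma]\pi(1+\mathcal R)^{-\frac{\rho[\alpha_2]}{\nu}}\Big)\Big(\pi(1+\mathcal R)^{\frac{\rho[\alpha_2]}{\nu}}[\Delta^{\alpha_2}\eta]\Big).
\]
The operator in the first parentheses is, by the very definition \eqref{seminorm} (with $\beta=0$, $m=-\varepsilon$, $\gamma=-\rho[\alpha_2]$, so that the left exponent becomes $\frac{\rho[\alpha_1]+\varepsilon+\rho[\alpha_2]}{\nu}=\frac{\varepsilon+\rho[\alpha]}{\nu}$), bounded in operator norm by the seminorm $p_{\alpha_1,0,-\rho[\alpha_2],-\varepsilon}(\sigma)$; by the equivalence of seminorms (Theorem 5.5.20 of \cite{FischerRuzhanskyBook2015}) and $[\alpha_1]\le\ell$ this is $\lesssim\Vert\sigma\Vert_{\ell,S^{-\varepsilon}_{\rho,\delta}}$. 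By submultiplicativity of the operator norm the problem reduces to bounding the spectral factor $\Vert\pi(1+\mathcal R)^{\rho[\alpha_2]/\nu}[\Delta^{\alpha_2}\eta(s\pi(\mathcal R))]\Vert_{\textnormal{op}}$.

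Third, I would extract the power of $s$ from this spectral factor. Setting $\tau(\pi):=\eta(\pi(\mathcal R))$ and $\pi'=s^{1/\nu}\cdot\pi$, Lemma \ref{dilationsrepre} gives $\eta(s\pi(\mathcal R))=\tau(\pi')=\tau_{s^{1/\nu}\cdot}(\pi)$ together with $\pi(1+\mathcal R)=1+s^{-1}\pi'(\mathcal R)$, while the dilation identity \eqref{FischerFermanian-Kammerer2017} yields $\Delta^{\alpha_2}[\eta(s\pi(\mathcal R))]=s^{[\alpha_2]/\nu}(\Delta^{\alpha_2}\tau)(\pi')$. Hence the spectral factor equals $s^{[\alpha_2]/\nu}\Vert(1+s^{-1}\pi'(\mathcal R))^{\rho[\alpha_2]/\nu}(\Delta^{\alpha_2}\tau)(\pi')\Vert_{\textnormal{op}}$. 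Since $[\alpha_2]\le[\alpha]\le\ell$, once the remaining operator norm is bounded uniformly in $s$, $R$ and $\pi'$, summing the finitely many summands will produce the claimed factor $s^{\ell/\nu}$.

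I expect the uniform bound on $\Vert(1+s^{-1}\pi'(\mathcal R))^{\rho[\alpha_2]/\nu}(\Delta^{\alpha_2}\tau)(\pi')\Vert_{\textnormal{op}}$ to be the main obstacle. The difficulty is that, although $\tau$ is a spectral multiplier of $\pi'(\mathcal R)$ supported in $[R,3R]$, the difference operator $\Delta^{\alpha_2}$ destroys this functional-calculus form, so the positive power $(1+s^{-1}\pi'(\mathcal R))^{\rho[\alpha_2]/\nu}$ no longer commutes through $(\Delta^{\alpha_2}\tau)(\pi')$. To handle this I would pass to the convolution kernel, writing $\Delta^{\alpha_2}\tau=\mathscr{F}_{G}(q_{\alpha_2}g)$ with $g=\mathscr{F}_{G}^{-1}(\eta(\pi(\mathcal R)))$ the Schwartz kernel of $\eta(\mathcal R)$, and estimate the operator norm through the functional calculus together with Lemma \ref{compare}, exactly as in the proof of Lemma \ref{FundamentallemmaI}. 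The Schwartz decay of $q_{\alpha_2}g$ and the spectral localization furnished by $\eta$ should absorb the growth of $(1+s^{-1}\pi'(\mathcal R))^{\rho[\alpha_2]/\nu}$ and yield a constant depending only on $\ell$, $\nu$ and finitely many seminorms of $\eta$, but not on $s$ or $R$, which is what is needed to complete the proof.
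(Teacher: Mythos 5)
Your reduction to a symbol factor and a spectral factor is sound as far as it goes (the insertion of $\pi(1+\mathcal{R})^{\pm\rho[\alpha_2]/\nu}$ and the appeal to the seminorm equivalence is fine), but the exponent bookkeeping at the end has a genuine gap: your Leibniz-first scheme attaches all of the $s$-dependence to the $\eta$-factor, so each summand only yields $s^{[\alpha_2]/\nu}$, and the term with $\alpha_2=0$ (i.e. $\alpha_1=\alpha$) carries no power of $s$ at all. Moreover, the inference ``since $[\alpha_2]\le \ell$, summing produces $s^{\ell/\nu}$'' runs in the wrong direction precisely in the regime that matters: for $0<s<1$ one has $s^{[\alpha_2]/\nu}\ge s^{\ell/\nu}$, and it is the small-$s$ regime that is used downstream (in the proof of Theorem \ref{LinftyBMOCardonaDelgadoRuzhansky} the lemma is applied with $s=2^{-j\nu}r^{-\nu}\lambda_{\mathcal{R}}^{-\nu}$ and one needs geometric decay in $j$ to sum the Littlewood--Paley pieces). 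The paper's proof is organized exactly to avoid this: it first splits $s>1$ (handled trivially, since $\sigma(x,\pi)\eta(s\pi(\mathcal{R}))$ lies in $S^{-\varepsilon}_{\rho,\delta}$ uniformly in $s>1$) from $0<s<1$, and for small $s$ it performs the change of variables $\pi'=s^{1/\nu}\cdot\pi$ \emph{before} applying the Leibniz rule. Then both Leibniz factors are dilated symbols, and the homogeneity identity \eqref{FischerFermanian-Kammerer2017} extracts a power $s^{[\alpha_1]/\nu}$ from the $\sigma$-factor in addition to $s^{2[\alpha_2]/\nu}$ from the $\eta$-factor (the latter is dilated by $s^{2/\nu}$, since $\eta(s\pi'(\mathcal{R}))=\eta((s^{2/\nu}\cdot\pi)(\mathcal{R}))$ by Lemma \ref{dilationsrepre}); the total exponent $[\alpha_1]+2[\alpha_2]\ge\ell$ then beats $s^{\ell/\nu}$ for $0<s<1$. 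In your direct approach the $\sigma$-factor is $s$-independent, and no rearrangement of the finitely many summands can recover the missing power.

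There is a second, subordinate problem in the step you yourself flag as the main obstacle: the uniform-in-$s$ bound on $\Vert(1+s^{-1}\pi'(\mathcal{R}))^{\rho[\alpha_2]/\nu}(\Delta^{\alpha_2}\tau)(\pi')\Vert_{\textnormal{op}}$ is not attainable in the generality you need. Since $\Delta^{\alpha_2}$ destroys the spectral localization of $\tau$ (as you note), the only available estimate is the elementary $(1+s^{-1}\lambda)^{a}\le s^{-a}(1+\lambda)^{a}$ for $0<s\le1$, which costs a factor $s^{-\rho[\alpha_2]/\nu}$ and degrades your extracted exponent from $[\alpha_2]$ to $(1-\rho)[\alpha_2]$; the Schwartz decay of the kernel $q_{\alpha_2}g$ controls the $(1+\lambda)^{a}$-weighted norm but cannot remove the $s^{-a}$ loss. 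The paper sidesteps this entirely by never leaving a positive power of $(1+\mathcal{R})$ on the $\eta$-factor: the whole weight is kept on the $\sigma$-factor, where the $S^{-\varepsilon}_{\rho,\delta}$ seminorm (whose essential supremum over $\widehat{G}$ absorbs the dilation $s^{1/\nu}\cdot\pi$) controls it, while the $\eta$-factor is estimated unweighted via $\sup_{s'>0}\Vert\Delta^{\alpha_2}\sigma_{\eta(\mathcal{R})}({s'}^{2/\nu}\cdot\pi)\Vert_{\textnormal{op}}\le\sup_{\pi\in\widehat{G}}\Vert\Delta^{\alpha_2}\eta(\pi(\mathcal{R}))\Vert_{\textnormal{op}}<\infty$. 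To repair your argument you would need to adopt both of these devices: dilate before differencing so the symbol factor contributes its share of the power of $s$, and distribute the weight so that no positive power of $(1+\mathcal{R})$ lands on the differenced cut-off.
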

\begin{proof}
For the proof, we need to check that
\begin{equation}\label{eq:ref:rev}
 \sup_{1>s>0}s^{-\frac{\ell}{\nu}}  \sup_{(x,\pi')\in G\times \widehat{G}}  \Vert  \pi'((1+\mathcal{R})^{\frac{ \varepsilon+\rho [\alpha]  }{\nu}})\Delta^{\alpha} [\sigma(x,\pi')\eta(s\pi'(\mathcal{R}))]     \Vert_{\textnormal{op}}\lesssim     \Vert \sigma\Vert_{\ell,S^{-\varepsilon}_{\rho,\delta}}.
\end{equation} 
Indeed, in the case $s>1,$ one has 
\begin{align*}
& \sup_{s>1}s^{-\frac{\ell}{\nu}}  \sup_{(x,\pi')\in G\times \widehat{G}}  \Vert  \pi'((1+\mathcal{R})^{\frac{ \varepsilon+\rho [\alpha]  }{\nu}})\Delta^{\alpha} [\sigma(x,\pi')\eta(s\pi'(\mathcal{R}))]     \Vert_{\textnormal{op}}\\
&\lesssim 
\sup_{s>1}  \sup_{(x,\pi')\in G\times \widehat{G}}  \Vert  \pi'((1+\mathcal{R})^{\frac{ \varepsilon+\rho [\alpha]  }{\nu}})\Delta^{\alpha} [\sigma(x,\pi')\eta(s\pi'(\mathcal{R}))]     \Vert_{\textnormal{op}}\lesssim 
\Vert \sigma\Vert_{\ell,S^{-\varepsilon}_{\rho,\delta}},
\end{align*} because the symbol $\sigma(x,\pi')\eta(s\pi'(\mathcal{R}))$ belongs uniformly in $s>1,$ to the class $S^{-\varepsilon}_{\rho,\delta}(G\times \widehat{G}).$ 
Now we are going to prove \eqref{eq:ref:rev}. Using the change $\pi':=s^{\frac{1}{\nu}}\cdot \pi,$ we have
\begin{align*}
    &\sup_{1>s>0}s^{-\frac{\ell}{\nu}}  \sup_{(x,\pi')\in G\times \widehat{G}}  \Vert  \pi'((1+\mathcal{R})^{\frac{ \varepsilon+\rho [\alpha]  }{\nu}})\Delta^{\alpha} [\sigma(x,\pi')\eta(s\pi'(\mathcal{R}))]     \Vert_{\textnormal{op}}\\
    &=\sup_{1>s>0}s^{-\frac{\ell}{\nu}}  \sup_{(x,\pi)\in G\times \widehat{G}}  \Vert  (s^{\frac{1}{\nu}}\cdot\pi)((1+\mathcal{R})^{\frac{ \varepsilon+\rho [\alpha]  }{\nu}})\Delta^{\alpha} [\sigma(x,s^{\frac{1}{\nu}}\cdot\pi)\eta(s((s^{\frac{1}{\nu}}\cdot\pi(\mathcal{R})) )]     \Vert_{\textnormal{op}}\\
     &=\sup_{1>s>0}s^{-\frac{\ell}{\nu}}  \sup_{(x,\pi)\in G\times \widehat{G}}  \Vert  (s^{\frac{1}{\nu}}\cdot\pi)((1+\mathcal{R})^{\frac{ \varepsilon+\rho [\alpha]  }{\nu}})\Delta^{\alpha} [\sigma(x,s^{\frac{1}{\nu}}\cdot\pi)\eta(s^{\frac{2}{\nu}}\cdot\pi(\mathcal{R})) )]     \Vert_{\textnormal{op}}.
\end{align*}
The Leibniz rule allows us to write
\begin{align*}
  &\Delta^{\alpha} [\sigma(x,s^{\frac{1}{\nu}}\cdot\pi)\eta((s^{\frac{2}{\nu}}\cdot\pi)(\mathcal{R}))]\\
   &=\sum_{\alpha_1+\alpha_2=\alpha}C_{\alpha_1,\alpha_2}[ \Delta^{\alpha_1}\sigma(x,(s^{\frac{1}{\nu}}\cdot \pi))][\Delta^{\alpha_2}{\sigma_{\eta(\mathcal{R})}}_{(s^{\frac{2}{\nu}}\cdot)}](\pi),
\end{align*}  where ${\sigma_{\eta(\mathcal{R})}}_{s^{\frac{1}{\nu}}\cdot}=\{\sigma_{\eta(\mathcal{R})}(s^{\frac{1}{\nu}}\cdot\pi)\},$ is defined as in \eqref{eeeeeeeee}, in this case in terms of the symbol $\sigma_{\eta(\mathcal{R})}(\cdot).$   
In view of the action of difference operators on dilations of representations (see \eqref{FischerFermanian-Kammerer2017}) we have
\begin{align}
    [\Delta^{\alpha_2}{\sigma_{\eta(\mathcal{R})}}_{(s^{\frac{2}{\nu}}\cdot)}](\pi)=s^{\frac{2  [\alpha_2]  }{\nu}}[\Delta^{\alpha_2}\sigma_{\eta(\mathcal{R})}](s^{\frac{2}{\nu}}\cdot \pi),
\end{align}
and 
\begin{equation*}
  \Delta^{\alpha_1}[\sigma(x,(s^{\frac{1}{\nu}}\cdot \pi))]=s^{\frac{ [\alpha_1] }{\nu}}[\Delta^{\alpha_1}\sigma](x,(s^{\frac{1}{\nu}}\cdot \pi))], 
\end{equation*} 
and we deduce
\begin{align*}
     & \Vert (s^{\frac{1}{\nu}}\cdot \pi)((1+\mathcal{R})^{\frac{ \varepsilon+\rho [\alpha_1]  }{\nu}}) \Delta^{\alpha}[\sigma(x,(s^{\frac{1}{\nu}}\cdot \pi))\eta(s^{\frac{2}{\nu}}\cdot\pi(\mathcal{R}))]\Vert_{\textnormal{op}} \\
     &\leq 
       \sum_{\alpha_1+\alpha_2=\alpha}C_{\alpha_1,\alpha_2} \Vert (s^{\frac{1}{\nu}}\cdot \pi)((1+\mathcal{R})^{\frac{ \varepsilon+\rho [\alpha_1]  }{\nu}})\\
       &\hspace{4cm}\circ s^{\frac{  [\alpha_1] }{\nu}   }[ \Delta^{\alpha_1}\sigma](x,(s^{\frac{1}{\nu}}\cdot \pi))\Vert_{\textnormal{op}}s^{\frac{2  [\alpha_2]  }{\nu}}\Vert \Delta^{\alpha_2}\sigma_{\eta(\mathcal{R})}(s^{\frac{2}{\nu}}\cdot \pi)\Vert_{\textnormal{op}}.
\end{align*}Observing that $\ell\leq  [\alpha_1] +2  [\alpha_2]  \leq 2\ell,$ and that we need to estimate the  operator norm for $0<s<1,$ we have that $s^{\frac{ [\alpha_1] +2  [\alpha_2]  }{\nu}}\leq s^{\frac{\ell}{\nu}},$ that togheter with the estimate,
\begin{equation*}
    \sup_{s'>0}\Vert \Delta^{\alpha_2}\sigma_{\eta(\mathcal{R})}({s'}^{\frac{2}{\nu}}\cdot \pi)\Vert_{\textnormal{op}}\leq \sup_{\pi\in \widehat{G}}\Vert \Delta^{\alpha_2}\sigma_{\eta(\mathcal{R})}( \pi)\Vert_{\textnormal{op}}=\sup_{\pi\in \widehat{G}}\Vert \Delta^{\alpha_2}{\eta(\pi(\mathcal{R}))}\Vert_{\textnormal{op}}<\infty,
\end{equation*}
allow us to obtain that
\begin{align*}
   &\Vert \pi((1+\mathcal{R})^{\frac{ \varepsilon+\rho [\alpha]  }{\nu}})\Delta^{\alpha} [\sigma(x,\pi)\eta(s\pi(\mathcal{R}))]      \Vert_{\textnormal{op}}\lesssim  \Vert \sigma\Vert_{\ell,S^{-\varepsilon}_{\rho,\delta}} s^{\frac{\ell}{\nu}},
\end{align*} proving  Lemma \ref{FundamentallemmaII}.
\end{proof}

Now, we proceed with the following local estimate for symbols in global H\"ormander classes.
\begin{lemma}\label{LemmaJulio} Let $r>0,$ and $0\leq \delta\leq \rho\leq  1,$ $\delta\neq 1.$
Let  $ \tau\in S^{-\varepsilon}_{\rho,\delta}(G\times\widehat{G}),$ where $\varepsilon\geq0,$ and let  $T=\textnormal{Op}(\tau)$. If $\phi$ is a smooth  compactly supported real-valued function in $B(x_0,2r)$ satisfying that  $
    \phi(x)=1, \textnormal{  for  }\,\,x\in B(x_0,r),\textnormal{ and }0\leq \phi\leq 10,
$  there exists a positive  constant $C>0,$ independent of $r>0,$ such that
\begin{equation}\label{LE}
   I:=  \frac{1}{|B(x_0,r)|}\int\limits_{B(x_0,r)}|T[\phi f](x)|dx\leq  {  C\Vert \sigma_{TL}\Vert_{k,S^{0}_{\rho,\delta}} }\Vert f\Vert_{L^\infty(G)},
\end{equation}for some $k\in \mathbb{N}_0,$ and where $L:=(1+\mathcal{R})^{\frac{\varepsilon}{\nu}}.$
\end{lemma}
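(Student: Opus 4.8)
The plan is to reduce the estimate to the $L^2(G)$-boundedness of $T$, exploiting that we are averaging $|T[\phi f]|$ over the \emph{same} ball $B(x_0,r)$ on which the localized input $\phi f$ is concentrated (recall that $\phi$ is supported in $B(x_0,2r)$). No off-diagonal/kernel-decay information is needed here, so a single application of the Cauchy--Schwarz inequality on the ball should suffice, and the whole point will be to track the $r$-dependence so that it cancels. The role of the factor $L=(1+\mathcal{R})^{\varepsilon/\nu}$ is purely to renormalize the order of $T$ to zero, so that the final constant can be expressed through a seminorm of $\sigma_{TL}$.

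First I would apply Cauchy--Schwarz on $B(x_0,r)$:
\begin{equation*}
\int\limits_{B(x_0,r)}|T[\phi f](x)|\,dx\leq |B(x_0,r)|^{\frac{1}{2}}\,\Vert T[\phi f]\Vert_{L^2(G)}\leq |B(x_0,r)|^{\frac{1}{2}}\,\Vert T\Vert_{\mathscr{B}(L^2(G))}\,\Vert \phi f\Vert_{L^2(G)}.
\end{equation*}
Using $0\leq\phi\leq 10$ and $\textnormal{supp}(\phi)\subset B(x_0,2r)$, together with the homogeneity $|B(x_0,2r)|=2^{Q}|B(x_0,r)|$, one gets $\Vert \phi f\Vert_{L^2(G)}\leq 10\,|B(x_0,2r)|^{\frac{1}{2}}\Vert f\Vert_{L^\infty(G)}=10\cdot 2^{\frac{Q}{2}}|B(x_0,r)|^{\frac{1}{2}}\Vert f\Vert_{L^\infty(G)}$. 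Dividing by $|B(x_0,r)|$, the two half-powers of the volume cancel, leaving
\begin{equation*}
I\leq C\,\Vert T\Vert_{\mathscr{B}(L^2(G))}\,\Vert f\Vert_{L^\infty(G)},
\end{equation*}
with $C$ depending only on $Q$; in particular the bound is independent of $r$.

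It then remains to bound $\Vert T\Vert_{\mathscr{B}(L^2(G))}$ by a seminorm of $\sigma_{TL}$ rather than of $\tau$, and this is where the factor $L$ enters. I would write $T=(TL)L^{-1}$, so that $\Vert T\Vert_{\mathscr{B}(L^2(G))}\leq \Vert TL\Vert_{\mathscr{B}(L^2(G))}\,\Vert L^{-1}\Vert_{\mathscr{B}(L^2(G))}$. Since $\varepsilon\geq 0$ and $\mathcal{R}\geq 0$, the multiplier $L^{-1}=(1+\mathcal{R})^{-\varepsilon/\nu}$ has spectrum contained in $(0,1]$, hence $\Vert L^{-1}\Vert_{\mathscr{B}(L^2(G))}\leq 1$. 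The multiplier $L$ has symbol in $S^{\varepsilon}_{1,0}(G\times\widehat{G})\subset S^{\varepsilon}_{\rho,\delta}(G\times\widehat{G})$ while $\tau\in S^{-\varepsilon}_{\rho,\delta}(G\times\widehat{G})$, so by the composition rule (Theorem \ref{calculus}, valid under $0\leq\delta\leq\rho\leq1$, $\delta\neq 1$) the symbol $\sigma_{TL}$ of $TL$ belongs to $S^{0}_{\rho,\delta}(G\times\widehat{G})$. Applying the Calder\'on--Vaillancourt theorem (Theorem \ref{CVT}) when $\rho<1$, and its endpoint version (Theorem \ref{1delta}) when $\rho=1$ with $\delta<1$, yields $\Vert TL\Vert_{\mathscr{B}(L^2(G))}\lesssim \Vert \sigma_{TL}\Vert_{k,S^{0}_{\rho,\rho}}$ for some $k\in\mathbb{N}_0$. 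Finally, since $\delta\leq\rho$ and $1+\mathcal{R}\geq 1$, factoring out $\pi(1+\mathcal{R})^{-(\rho-\delta)[\beta]/\nu}$ (of operator norm $\leq 1$) in each seminorm shows $\Vert\cdot\Vert_{k,S^{0}_{\rho,\rho}}\leq\Vert\cdot\Vert_{k,S^{0}_{\rho,\delta}}$, which combined with the above gives $\Vert TL\Vert_{\mathscr{B}(L^2(G))}\lesssim \Vert \sigma_{TL}\Vert_{k,S^{0}_{\rho,\delta}}$ and hence \eqref{LE}.

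The volume bookkeeping and the Cauchy--Schwarz step are routine; the only genuine point to get right is the last one, namely expressing the $L^2$-operator norm through $\sigma_{TL}$ instead of $\tau$. This is exactly what the hypotheses are tailored for: $\varepsilon\geq 0$ makes $L^{-1}$ contractive on $L^2(G)$, while $\delta\leq\rho$ and $\delta\neq1$ are precisely the conditions under which both the symbolic composition calculus and the order-zero $L^2$-boundedness theorems are available.
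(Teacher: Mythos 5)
Your proof is correct and follows essentially the same route as the paper's: Cauchy--Schwarz on the ball with the two half-powers of the volume cancelling, the factorization through $L=(1+\mathcal{R})^{\frac{\varepsilon}{\nu}}$ so that $\sigma_{TL}\in S^{0}_{\rho,\delta}(G\times\widehat{G})$ by the composition calculus, and the Calder\'on--Vaillancourt theorem (respectively its $\rho=1$ endpoint, Theorem \ref{1delta}) applied to $TL$. The only cosmetic difference is that the paper phrases the contractivity of $L^{-1}$ on $L^2(G)$ as the embedding $L^2(G)\hookrightarrow H^{-\varepsilon,\mathcal{R}}(G)$ applied to $\phi f$, which is exactly your spectral bound $\Vert L^{-1}\Vert_{\mathscr{B}(L^2(G))}\leq 1$, and it leaves implicit the seminorm comparison $\Vert\cdot\Vert_{k,S^{0}_{\rho,\rho}}\leq\Vert\cdot\Vert_{k,S^{0}_{\rho,\delta}}$ that you spell out.
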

\begin{proof}
From the properties of $\phi$ we have
\begin{equation*}
 \Small{   \int\limits_{B(x_0,r)}\phi(x)^2dx\leq  \int\limits_{B(x_0,2r)}\phi(x)^2dx =\Vert \phi\Vert_{L^2(G)}^2\leq 100  |B(x_0,2r)|.}
\end{equation*} Consequently, we deduce
\begin{equation}\label{35}
      10|B(x_0,2r)|^{\frac{1}{2}}\leq 10C|B(x_0,r)|^{\frac{1}{2}},
\end{equation}where in the last inequality we have used  that the measure on the group satisfies the doubling property. To estimate $I,$ observe that, in view of the Cauchy-Schwarz inequality, we have
\begin{equation*}
    \frac{1}{|B(x_0,r)|}\int\limits_{B(x_0,r)}|T[\phi f](x)|dx\leq \frac{1}{|B(x_0,r)|^{\frac{1}{2}}}\left(\int\limits_{B(x_0,r)} |T[\phi f](x)|^2dx   \right)^{\frac{1}{2}}.
\end{equation*}
Let $L:=(1+\mathcal{R})^{\frac{\varepsilon}{\nu}}\in S^{\varepsilon}_{1,0}(G\times\widehat{G})\subset S^{\varepsilon}_{\rho,0}(G\times\widehat{G})\subset S^{\varepsilon}_{\rho,\delta}(G\times\widehat{G}).$ Since $T\in\textnormal{Op}( S^{-\varepsilon}_{\rho,\delta}(G\times\widehat{G})), $ Theorem 5.2.22, part (ii) in \cite{FischerRuzhanskyBook2015} gives
\begin{equation*}
    TL=T(1+\mathcal{R})^{\frac{\varepsilon}{\nu}}\in S^{0}_{\rho,\delta}(G\times\widehat{G}).
\end{equation*}
In view of the condition $0\leq \delta\leq \rho\leq 1,$ $\delta\neq 1,$ the Calder\'on-Vaillancourt Theorem (see Proposition 5.7.14 of \cite{FischerRuzhanskyBook2015}) if $0\leq \delta\leq \rho<1,$ or Theorem \ref{1delta} if $0\leq \delta<\rho=1,$ implies that $ TL$ is bounded on $L^2(G),$  with the operator norm bounded, modulo   a constant factor,  by a semi-norm $\Vert \sigma_{TL}\Vert_{k,S^{0}_{\rho,\delta}}$  of the symbol of $TL,$ $\sigma_{TL}.$ Consequently,
\begin{align*}
 &  \frac{1}{|B(x_0,r)|^{\frac{1}{2}}}\left(\int\limits_{B(x_0,r)} |TL[L^{-1}(\phi f)](x)|^2dx   \right)^{\frac{1}{2}}\leq \frac{\Vert TL[L^{-1}(\phi f)] \Vert_{L^2(G)}}{|B(x_0,r)|^{\frac{1}{2}}}\\
   &\leq \frac{  C\Vert \sigma_{TL}\Vert_{k,S^{0}_{\rho,\delta}}    \Vert L^{-1}( \phi f )\Vert_{L^2(G)}}{|B(x_0,r)|^{\frac{1}{2}}}.
\end{align*}
By observing that 
\begin{equation*}
    \Vert L^{-1}(\phi f) \Vert_{L^2(G)}=\Vert \phi f\Vert_{H^{-\varepsilon,\mathcal{R}}(G)},
\end{equation*}
where $H^{-\varepsilon,\mathcal{R}}(G)$ is the Sobolev space of order $-\varepsilon,$ associated to $\mathcal{R},$ the embedding $L^2(G)\hookrightarrow H^{-\varepsilon,\mathcal{R}}(G), $ implies that
\begin{equation*}
 \Vert L^{-1}(\phi f) \Vert_{L^2(G)}=\Vert \phi f\Vert_{H^{-\varepsilon,\mathcal{R}}(G)}\lesssim \Vert \phi f \Vert_{L^2(G)} .    
\end{equation*}
Moreover, from \eqref{35}, we deduce the inequality
\begin{equation*}
     \Vert \phi f \Vert_{L^2(G)}\leq \Vert f\Vert_{L^\infty(G)}\Vert \phi\Vert_{L^2(G)}\lesssim   10\Vert f\Vert_{L^\infty(G)}|B(x_0,r)|^{\frac{1}{2}}.
\end{equation*}
So, we conclude
\begin{equation*}
    I:= \frac{1}{|B(x_0,r)|}\int\limits_{B(x_0,r)}|T[\phi f](x)|dx\leq {  C\Vert \sigma_{TL}\Vert_{k,S^{0}_{\rho,\delta}} }\Vert f\Vert_{L^\infty(G)},
\end{equation*}completing the proof.
\end{proof}

The following Theorem \ref{LinftyBMOCardonaDelgadoRuzhansky}, corresponds to Part (a) in our main Theorem \ref{MainTheorem}.

\begin{theorem}\label{LinftyBMOCardonaDelgadoRuzhansky}
Let $G$ be a graded Lie group of homogeneous dimension $Q.$ Let $A:C^\infty(G)\rightarrow\mathscr{D}'(G)$ be a pseudo-differential operator with symbol $\sigma\in S^{-m}_{\rho,\delta}(G\times \widehat{G} ),$ $0\leq \delta\leq \rho\leq 1,$ $\delta\neq 1.$ Then $A=\textnormal{Op}(\sigma)$ extends to a bounded operator from $L^\infty(G)$ to $BMO(G),$ and  we have
\begin{equation}\label{LBMO}
    \Vert  A\Vert_{\mathscr{B}(L^\infty(G),BMO(G))}\leq C\max\{ \Vert\sigma\Vert_{\ell,\,S^{-\frac{Q(1-\rho)}{2}}_{\rho,\delta}},    \Vert\sigma_{A(1+\mathcal{R})^{\frac{Q(1-\rho)}{2\nu}}}\Vert_{\ell,\,S^{0}_{\rho,\delta}}  \},
\end{equation}for $\ell\in\mathbb{N}$ large enough. Moreover, $A$ also extends to a bounded operator from the Hardy space $H^1(G)$ into $L^1(G)$ and 
\begin{equation}\label{H1L1}
    \Vert  A\Vert_{\mathscr{B}(H^1(G),L^1(G)))}\leq  C\max\{ \Vert\sigma^*\Vert_{\ell,\,S^{-\frac{Q(1-\rho)}{2}}_{\rho,\delta}},    \Vert\sigma_{A^*(1+\mathcal{R})^{\frac{Q(1-\rho)}{2\nu}}}\Vert_{\ell,\,S^{0}_{\rho,\delta}}  \},
\end{equation}where $\sigma^{*}\in S^{-\frac{Q(1-\rho)}{2}}_{\rho,\delta}(G\times \widehat{G})$ denotes the symbol of the formal adjoint $A^*.$
\end{theorem}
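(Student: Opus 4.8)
The plan is to establish the $L^\infty$-$BMO$ estimate \eqref{LBMO} first, and then to deduce the $H^1$-$L^1$ estimate \eqref{H1L1} by duality. For \eqref{LBMO}, fix $f\in L^\infty(G)$ with $\Vert f\Vert_{L^\infty(G)}=1$ and an arbitrary ball $\mathbb{B}=B(x_0,r)$; it suffices to exhibit a constant $c=c_{\mathbb{B}}$ for which the oscillation $\frac{1}{|\mathbb{B}|}\int_{\mathbb{B}}|Af(x)-c|\,dx$ is dominated by the right-hand side of \eqref{LBMO}, uniformly in $(x_0,r)$. Following the standard localisation scheme, I would take a cut-off $\phi$ adapted to $\mathbb{B}$ as in Lemma \ref{LemmaJulio} (so that $\phi\equiv 1$ on a fixed dilate of $\mathbb{B}$, $0\le\phi\le 10$, and $\phi$ is supported in a larger dilate), split $f=\phi f+(1-\phi)f$, and set $c:=A[(1-\phi)f](x_0)$. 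The triangle inequality reduces the problem to a \emph{local} bound for $\frac{1}{|\mathbb{B}|}\int_{\mathbb{B}}|A[\phi f]|\,dx$ and a \emph{far-field} bound for $\frac{1}{|\mathbb{B}|}\int_{\mathbb{B}}|A[(1-\phi)f](x)-A[(1-\phi)f](x_0)|\,dx$.

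The local term is precisely the content of Lemma \ref{LemmaJulio} applied with $T=A$, $\varepsilon=m=\frac{Q(1-\rho)}{2}$ and $L=(1+\mathcal{R})^{\frac{m}{\nu}}$: since the smoothed operator satisfies $AL\in S^0_{\rho,\delta}(G\times\widehat{G})$ and is $L^2(G)$-bounded by the Calder\'on-Vaillancourt Theorem \ref{CVT} (when $\rho<1$) or by Theorem \ref{1delta} (when $\rho=1$, $\delta<1$), one obtains $\frac{1}{|\mathbb{B}|}\int_{\mathbb{B}}|A[\phi f]|\,dx\lesssim \Vert \sigma_{A(1+\mathcal{R})^{\frac{Q(1-\rho)}{2\nu}}}\Vert_{\ell,S^0_{\rho,\delta}}$, which produces the second seminorm in the maximum in \eqref{LBMO}. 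It is crucial here that the order is exactly the critical value $m=\frac{Q(1-\rho)}{2}$, so that the smoothing by $L$ lands $AL$ in the $L^2$-bounded class $S^0_{\rho,\delta}$.

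The far-field term is the heart of the matter. Writing the Schwartz kernel as $K_A(x,y)=k_x(y^{-1}x)$, I would reduce this piece to the H\"ormander-type kernel-regularity inequality $\sup_{x\in\mathbb{B}}\int_{|y^{-1}x_0|\gtrsim r}|K_A(x,y)-K_A(x_0,y)|\,dy\lesssim \Vert\sigma\Vert_{\ell,S^{-m}_{\rho,\delta}}$, which supplies the first seminorm in \eqref{LBMO}. To prove it I would Littlewood-Paley decompose $\sigma=\sum_{j\ge 0}\sigma(x,\pi)\,\psi_j(\pi(\mathcal{R}))$ and treat the kernel pieces $k_x^{(j)}$ separately. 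For each piece the computation in the proof of Lemma \ref{FundamentallemmaI} already furnishes uniform control of $\Vert k_x^{(j)}\Vert_{L^1(G)}$; by inserting higher powers of the homogeneous difference weight $q$ (and invoking Lemma \ref{FundamentallemmaII} to bound the resulting localised seminorms with the correct dyadic scaling) one upgrades this to rapid off-diagonal decay at the natural spatial scale $b_j=2^{-j\rho}$, while the $x$-regularity needed for the difference $k_x^{(j)}-k_{x_0}^{(j)}$ comes from the left-invariant derivatives $X_x^\beta$ built into the class $S^{-m}_{\rho,\delta}$ together with a mean-value estimate on the group. Summing these size and smoothness bounds over $j\ge 0$, splitting according to whether $b_j$ is larger or smaller than $|y^{-1}x_0|$, yields the desired inequality. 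I expect this summation to be the main obstacle: one must balance the size estimates (favourable at low frequencies) against the smoothness estimates (favourable at high frequencies) so that the geometric series converges exactly at the critical order, and it is here that the borderline case $\delta=\rho$ is accommodated, at the cost of the finite, explicit loss of regularity recorded in Remark \ref{remarkestimate}.

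Finally, \eqref{H1L1} follows by duality. By the adjoint rule of Theorem \ref{calculus} (valid since $0\le\delta\le\rho\le1$, $\delta\neq1$), the formal adjoint satisfies $A^*=\textnormal{Op}(\sigma^*)$ with $\sigma^*\in S^{-m}_{\rho,\delta}(G\times\widehat{G})$, so the $L^\infty$-$BMO$ bound just established applies to $A^*$. Using the Fefferman-Stein duality $(H^1(G))'=BMO(G)$ through \eqref{BMOnormduality}, for $g\in H^1(G)$ I would write $\Vert Ag\Vert_{L^1(G)}=\sup_{\Vert h\Vert_{L^\infty(G)}\le1}\big|\int_G (Ag)(x)h(x)\,dx\big|=\sup_{\Vert h\Vert_{L^\infty(G)}\le1}\big|\int_G g(x)(A^*h)(x)\,dx\big|\le \Vert g\Vert_{H^1(G)}\,\sup_{\Vert h\Vert_{L^\infty(G)}\le1}\Vert A^*h\Vert_{BMO(G)}$, and the last supremum is controlled by $\max\{\Vert\sigma^*\Vert_{\ell,S^{-m}_{\rho,\delta}},\Vert\sigma_{A^*(1+\mathcal{R})^{\frac{Q(1-\rho)}{2\nu}}}\Vert_{\ell,S^0_{\rho,\delta}}\}$, which is exactly \eqref{H1L1}.
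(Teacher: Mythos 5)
Your local term and your duality ending coincide with the paper's proof: the bound for $\frac{1}{|\mathbb{B}|}\int_{\mathbb{B}}|A[\phi f]|$ is exactly Lemma \ref{LemmaJulio} with $\varepsilon=\frac{Q(1-\rho)}{2}$ (Calder\'on--Vaillancourt, Theorem \ref{CVT} or \ref{1delta}, applied to $AL\in S^0_{\rho,\delta}$), and the passage to \eqref{H1L1} via Theorem \ref{calculus} and the Christ--Geller duality $(H^1)'=BMO$ is the paper's argument verbatim. The gap is in your far-field step. The H\"ormander-type kernel inequality you claim, $\sup_{x\in\mathbb{B}}\int_{|y^{-1}x_0|\gtrsim r}|K_A(x,y)-K_A(x_0,y)|\,dy\lesssim\Vert\sigma\Vert_{\ell,S^{-m}_{\rho,\delta}}$ \emph{uniformly in} $r$, is genuinely false at the critical order $m=\frac{Q(1-\rho)}{2}$ when $\rho<1$; it is not merely hard to prove by summation. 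Run your own dyadic bookkeeping: the piece at spectral scale $2^{j}$ has $\Vert k^{(j)}_x\Vert_{L^1}=O(1)$ (this is all Lemma \ref{FundamentallemmaI} gives at the critical order), its mass sits at the spatial scale $b_j=2^{-j\rho}$, and the smoothness gain from $x$-regularity is $\min(1,2^{j}r)$. In the intermediate window $r^{-1}\lesssim 2^{j}\lesssim r^{-1/\rho}$ neither mechanism helps: $2^{j}r\geq 1$, while $b_j\geq r$ so the kernel mass is \emph{not} concentrated inside the ball and the tail integral over $|y^{-1}x_0|\gtrsim r$ is $\Theta(1)$ per piece. Summing over this window costs $\gtrsim(\rho^{-1}-1)\log(1/r)$, which blows up as $r\to 0$. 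You can see this failure already on $G=\mathbb{R}$ with $\rho=\frac12$, $m=\frac14$: the kernel of $e^{i|\xi|^{1/2}}(1+\xi^2)^{-1/8}$ behaves like $|x|^{-1}e^{ic/|x|}$ near the origin, and $\int_{|x|\geq 2|y|}|K(x-y)-K(x)|\,dx\sim\log(1/|y|)$; the H\"ormander integral converges only from the threshold $|x|\gtrsim|y|^{\rho}$, not $|x|\gtrsim|y|$.

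This is precisely why the paper never proves a far-field kernel estimate. Instead it splits the \emph{symbol} spectrally at the $r$-dependent scale, $\sigma=\sigma^0+\sigma^1$ with $\sigma^0(x,\pi)=\sigma(x,\pi)\gamma\bigl(\tfrac{r^{-\nu}}{\lambda^\nu_{\mathcal{R}}}\pi(\mathcal{R})\bigr)$: the low-frequency part $A^0$ is handled by the mean value theorem applied to $A^0f$ itself, with $\Vert X_kA^0f\Vert_{L^\infty}$ controlled by the dyadic pieces through Lemmas \ref{FundamentallemmaI} and \ref{FundamentallemmaII}; the high-frequency part is handled on the ball by writing $A^1f=A^1[\phi f]+[M_{\phi},A^1]f$ there, where the commutator --- not a kernel bound --- absorbs the far part of $f$, since its dyadic symbols $\theta_j$ gain $(2^{-j}r^{-1})^{\ell'}$, summable exactly on the spectral support of $\sigma^1$. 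If you want to keep your Calder\'on--Zygmund route, the repair is the one from Fefferman's strongly singular theory: for $r<1$ take the cut-off $\phi$ at the \emph{wider} scale $r^{\rho}$ (and $r$ itself only when $r\geq 1$), prove the kernel condition beyond $r^{\rho}$, and check that Lemma \ref{LemmaJulio} still controls the enlarged local term; as written, with $\phi$ at scale $2r$ and the far field at distance $\gtrsim r$, your geometric series does not close at the critical order.
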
 
\begin{proof}
 Let us fix $f\in L^\infty(G)$ and a ball $B(x_0,r)$ where $x_0\in G.$ We will prove that the estimate
\begin{equation*}
    \frac{1}{|B(x_0,r)|}\int\limits_{B(x_0,r)}|Af(x)-(Af)_{B(x_0,r)}|dx\leq C\Vert \sigma\Vert_{\ell,S^{-\frac{Q(1-\rho)}{2}}_{\rho,\delta}}\Vert f\Vert_{L^\infty(G)}
\end{equation*}
holds true with a positive constant $C>0$ independent of $f$ and $r,$ where 
\begin{equation*}
    (Af)_{B(x_0,r)}:=\textnormal{Average}(Af,B(x_0,r))=\frac{1}{|B(x_0,r)|}\int\limits_{B(x_0,r)}Af(x)dx
\end{equation*} and $\Vert \sigma\Vert_{\ell,S^{-\frac{Q(1-\rho)}{2}}_{\rho,\delta}}$ is the seminorm of $\sigma$
in the right hand side of \eqref{LBMO}. By using the spectral decomposition of $\mathcal{R},$ for a.e. $(x,\pi)\in  G\times\widehat{G},$ we will express  $\sigma(x,\pi)$ as the sum of two densely defined operators on $H_\pi,$
\begin{equation*}
    \sigma(x,\pi)=\sigma^0(x,\pi)+\sigma^1(x,\pi),\,\,\,\sigma^{j}(x,\pi):H_\pi^\infty\rightarrow H_\pi,\,j=0,1,
\end{equation*} in a such way that both, $\sigma^0(x,\pi)$  and  $\sigma^1(x,\pi),$ define two $\widehat{G}$-fields of operators satisfying
\begin{equation*}
    \Vert \sigma^{j}\Vert_{\ell,S^{-\frac{Q(1-\rho)}{2}}_{\rho,\delta}}\leq C_{j,\ell}\Vert \sigma\Vert_{\ell,S^{-\frac{Q(1-\rho)}{2}}_{\rho,\delta}},\,\,j=0,1,\,\ell\geq 1. 
\end{equation*}
To guarantee the existence of $\sigma^j,$ let us consider $\gamma\in C^\infty_0(\mathbb{R}, \mathbb{R}^+_0),$ satisfying the following requirements: for $|t|\leq \frac{1}{2},$ $\gamma(t)=1$ and $\gamma(t)=0$ for all $t$ with $|t|\geq 1.$
Let us define, for a.e. $\pi\in \widehat{G}$ the operator
\begin{equation*}
    \tilde{\gamma}(\pi):=\gamma(\frac{r^{-\nu}}{\lambda^\nu_{\mathcal{R}} }\pi(\mathcal{R})):H_\pi\rightarrow H_\pi,
\end{equation*}
where $\lambda_{\mathcal{R}}>0,$ is a positive real number which will be defined later. Let us define,
\begin{equation}\label{gamasupport}
    \sigma^0(x,\pi):=\sigma(x,\pi)\tilde{\gamma}(\pi)   :H_\pi\rightarrow H_\pi,
\end{equation} and 
\begin{equation*}
  \sigma^1(x,\pi):=\sigma(x,\pi)-\sigma^0(x,\pi):\textnormal{Dom}(\sigma(x,\pi))\cap \textnormal{Dom}(\sigma^0(x,\pi))\supset H_\pi^\infty\rightarrow H_\pi.
\end{equation*}
If we denote by $A^j=\textnormal{Op}(\sigma^j),$  and $(A^jf)_{B(x_0,r)}=\textnormal{Average}(A^jf,B(x_0,r))$ for $j=0,1,$ we have,
\begin{align*}
     \frac{1}{|B(x_0,r)|}\int\limits_{B(x_0,r)}|Af(x)-&(Af)_{B(x_0,r)}|dx\\
  &   \leq\sum_{j=0,1}  \frac{1}{|B(x_0,r)|}\int\limits_{B(x_0,r)}|A^jf(x)-(A^jf)_{B(x_0,r)}|dx.
\end{align*}
To estimate the integral
\begin{equation*}
    I_0:=\frac{1}{|B(x_0,r)|}\int\limits_{B(x_0,r)}|A^0f(x)-(A^0f)_{B(x_0,r)}|dx,
\end{equation*}we will use the Mean Value Theorem (see \cite[page 119]{FischerRuzhanskyBook2015}). Indeed, observe that
\begin{align*}
    |A^0f(x)-A^0f(y)| &\leq C_{0}\sum_{k=1}^{\textnormal{dim}(G)}|y^{-1}x|^{\nu_j}\sup_{|z|\leq \eta|y^{-1}x|}|(X_kA^0f)(yz)|\\
    &\lesssim \sum_{k=1}^{\textnormal{dim}(G)}r^{\nu_j}\Vert X_kA^0f\Vert_{L^\infty(G)}.
\end{align*}
So, if $r\geq 1,$
\begin{equation}\label{nu1}
    |A^0f(x)-A^0f(y)| \lesssim r^{Q}\sup_{1\leq k\leq \dim(G)}\Vert X_kA^0f\Vert_{L^\infty(G)},
\end{equation}while for $0<r<1,$ we have
\begin{equation}\label{nu2}
    |A^0f(x)-A^0f(y)| \lesssim r^{\nu_{*}}\sup_{1\leq k\leq \dim(G)}\Vert X_kA^0f\Vert_{L^\infty(G)},\,\,\,\,\nu_*=\min_{1\leq j\leq \dim(G)}\{\nu_j\}.
\end{equation}

To estimate the $L^\infty$-norm of $X_kA^0f,$ let us observe that, in view of \eqref{formulasymbol}, the operator-valued symbol of $X_kA^0=\textnormal{Op}(\sigma'_k)$ is given by
\begin{equation*}
\sigma'_k(x,\pi):=   \pi(X_k)\sigma^0(x,\pi)+(X_k\sigma^0(x,\pi)).
\end{equation*}
Indeed, the Leibniz law gives
\begin{align*}
    X_kA^0f(x)&=\int\limits_{\widehat{G}}\textnormal{\textbf{Tr}}(X_k(\pi(x)\sigma^0(x,\pi))\widehat{f}(\pi))d\pi\\
    &=\int\limits_{\widehat{G}}\textnormal{\textbf{Tr}}([X_k(\pi(x))\sigma^0(x,\pi)+\pi(x)X_k\sigma^0(x,\pi))]\widehat{f}(\pi))d\pi.
\end{align*}
Because, $\pi(X_k)=\pi(x)^*X_k\pi(x),$ we have $X_k\pi(x)=\pi(x)\pi(X_k),$ and we obtain
\begin{align*}
     X_kA^0f(x)&=\int\limits_{\widehat{G}}\textnormal{\textbf{Tr}}([\pi(x)\pi(X_k)\sigma^0(x,\pi)+\pi(x)X_k\sigma^0(x,\pi))]\widehat{f}(\pi))d\pi.
\end{align*} By using a suitable partition of the unity we will decompose the operator $\sigma'_{k}(x,\pi)$ as follows:
\begin{equation*}
    \sigma'_{k}(x,\pi)=\sum_{j=1}^\infty\rho_{j,k}(x,\pi).
\end{equation*}
To construct the family of operators $\rho_{j,k}(x,\pi)$ we will proceed as follows. We choose a smooth real  function $\eta$ satisfying $\eta(t)\equiv 1$ for $|t|\leq 2^{-\nu}$ and $\eta(t)\equiv 0$ for $|t|\geq 2^{-\nu+1}.$ Set
\begin{equation*}
    \rho(t)=\eta(\frac{t}{2})-\eta(t).
\end{equation*} On the support of $\rho,$  $t\in \textnormal{supp}\rho $ implies that $t\sim 2^{-\nu}.$ One can check that 
\begin{equation*}
    1=\eta(t^{\nu})+\sum_{j=1}^\infty\rho(2^{-j\nu}t^\nu),\,\,\,\,\textnormal{ for all }t\in \mathbb{R}.
\end{equation*} 
Indeed, 
\begin{equation*}
    \eta(t^\nu)+\sum_{j=1}^\ell\rho(2^{-j\nu}t^{\nu})=\eta(t^\nu)+\sum_{j=1}^\ell\eta(2^{-j\nu+\nu}t^{\nu})-\eta(2^{-j\nu}t^{\nu})=\eta(2^{-\ell\nu+\nu}t^\nu)\rightarrow 1,\,\,\ell\rightarrow\infty.
\end{equation*}
For $t^\nu=r^{-\nu}\lambda,$ we have
\begin{equation*}
   1=\eta(r^{-\nu}\lambda)+\sum_{j=1}^\infty\rho(2^{-j\nu}r^{-\nu}\lambda),\,\,\,\,\textnormal{ for all }\lambda\in \mathbb{R}.
\end{equation*}
We can assume that $\lambda=0$ is an isolated point of the spectrum of $\mathcal{R},$ (see  Geller  \cite{Geller1983} or \cite[Section 3.2.8]{FischerRuzhanskyBook2015}). If $\textnormal{Spect}(\mathcal{R})\subset (\lambda_{R},\infty)$ with  $\lambda_{\mathcal{R}}>0,$ we have that
the spectral theorem implies,
\begin{align*}
    &I_{H_\pi}\equiv\\
    &\int\limits_{0}^{\infty}(\eta(\frac{r^{-\nu}}{\lambda^\nu_{\mathcal{R}}}\lambda)+\sum_{j=1}^\infty\rho(2^{-j\nu}\frac{r^{-\nu}}{\lambda^\nu_{\mathcal{R}}}\lambda))dE_{\pi(\mathcal{R})}(\lambda)\equiv \eta(\frac{r^{-\nu}}{\lambda^\nu_{\mathcal{R}}}\pi(\mathcal{R}))+\sum_{j=1}^\infty\rho(2^{-j\nu}\frac{r^{-\nu}}{\lambda^\nu_{\mathcal{R}}}\pi(\mathcal{R}))),
\end{align*} where the convergence of the operator series to the identity operator $I_{H_\pi}$ is understood in the sense of the strong topology on $\mathscr{L}(H_\pi),$ the set of linear operators on $H_{\pi}.$ This means that, for every $v,w\in H_{\pi}$ we have
\begin{equation*}
    ( v,w )_{H_\pi}= ( \eta(\frac{r^{-\nu}}{\lambda^\nu_{\mathcal{R}}}\pi(\mathcal{R}))v,w)_{H_\pi}+\sum_{j=1}^\infty(\rho(2^{-j\nu}\frac{r^{-\nu}}{\lambda^\nu_{\mathcal{R}}}\pi(\mathcal{R})))v,w)_{H_\pi}.
\end{equation*}
Because $\textnormal{supp}(\eta)\subset [1,\infty),$ if $r\lambda\leq 1,$ we have $\eta(r\lambda)\equiv 0$ and 
\begin{equation*}
    I_{H_\pi}\equiv \int\limits_{\lambda^\nu_{\mathcal{R}}}^{\infty}\sum_{j=1}^\infty\rho(2^{-j\nu}\frac{r^{-\nu}}{\lambda^\nu_{\mathcal{R}}}\lambda)dE_{\pi(\mathcal{R})}(\lambda)\equiv\sum_{j=1}^\infty\rho(2^{-j\nu}\frac{r^{-\nu}}{\lambda^\nu_{\mathcal{R}}}\pi(\mathcal{R}))),
\end{equation*}where the convergence of the operator series to the identity operator $I_{H_\pi}$ is understood in the sense of the strong topology on $\mathscr{L}(H_\pi).$ In view of \eqref{gamasupport}, $\textnormal{supp}(\tilde{\gamma})\subset \{t:|t|\leq 1\},$ we have
\begin{equation*}
    \sigma'_{k}(x,\pi)=\sum_{j=1}^\infty\sigma'_{k}(x,\pi)\rho(2^{-j\nu}\frac{r^{-\nu}}{\lambda^\nu_{\mathcal{R}}}\pi(\mathcal{R}))).
\end{equation*} We define
\begin{equation*}
    \rho_{j,k}(x,\pi):=\sigma'_{k}(x,\pi)\rho(2^{-j\nu}\frac{r^{-\nu}}{\lambda^\nu_{\mathcal{R}}}\pi(\mathcal{R}))):H_\pi\rightarrow H_\pi.
\end{equation*}
Because  $\textnormal{supp}(\rho)\subset[1,4] ,$ for every $j,$ the support of the function $\rho_{j}(\lambda):= \rho(\frac{r^{-\nu}}{\lambda^\nu_{\mathcal{R}}}2^{-j\nu}\lambda)$ satisfies 
\begin{equation*}
    \textnormal{supp}(\rho_j)\subset \{\lambda: 1\leq \frac{r^{-\nu}}{\lambda^\nu_{\mathcal{R}}}2^{-j\nu}\lambda\leq 4 \}=\{\lambda: 2^{j\nu}\leq \frac{r^{-\nu}}{\lambda^\nu_{\mathcal{R}}}\lambda\leq 2^{j\nu+2)} \}.
\end{equation*}
So, in the support of $\rho_j$ we have $\lambda\approx \lambda^\nu_{\mathcal{R}}2^{j\nu}r^{\nu}.$ If we use both, Lemma \ref{FundamentallemmaI} and Lemma \ref{FundamentallemmaII}, we have that
\begin{align*}
    &\Vert  \textnormal{Op}((x,\pi)\mapsto \rho_{j,k}(x,\pi))\Vert_{\mathscr{B}(L^\infty(G))} \\
    &\leq C \left(\sup_{\pi\in\widehat{G}, [\alpha_1] \leq \ell} \Vert [\Delta^{\alpha_1} \sigma'_{k,j}(x,\pi)\rho((2^{-j\nu}\frac{r^{-\nu}}{\lambda^\nu_{\mathcal{R}}}\cdot\pi)(\mathcal{R})))]\pi((1+\mathcal{R})^{\frac{ \frac{Q(1-\rho)}{2}+\rho [\alpha_1]  }{\nu}})  \Vert_{\textnormal{op}}     \right)\\
    &\lesssim  \Vert \sigma\Vert_{\ell,S^{m}_{\rho,\delta}}(r^{-\nu}2^{-j\nu})^{\frac{\ell}{\nu}}=  \Vert \sigma\Vert_{\ell,S^{m}_{\rho,\delta}}(r^{-1}2^{-j})^{\ell},
\end{align*} where in the last line we have used Lemma \ref{compare}.
In view of the inequality (see \eqref{nu1} and \eqref{nu2})
\begin{equation*}
    |A^0f(x)-A^0f(y)| \lesssim r^{\nu(r)}\sup_{1\leq k\leq \dim(G)}\Vert X_kA^0f\Vert_{L^\infty(G)},
\end{equation*} where $\nu(r)=\nu_{*}:=\min_{1\leq j\leq \dim(G)}\{\nu_j\},$   for $0\leq r\leq 1,$ and $\nu(r)=Q:=\sum_{j=1}^n\nu_j,$ for $r\geq 1,$ we have
\begin{align*}
     I_0:&=\frac{1}{|B(x_0,r)|}\int\limits_{B(x_0,r)}|A^0f(x)-(A^0f)_{B(x_0,r)}|dx\leq r^{\nu(r)}\sup_{1\leq j\leq \dim(G)}\Vert X_kA^0f\Vert_{L^\infty(G)}\\
     &= r^{\nu(r)} \sup_{1\leq k\leq \dim(G)}\Vert \textnormal{Op}(\sigma'_{k})f\Vert_{L^\infty(G)} \leq r^{\nu(r)}\sup_{1\leq k\leq \dim(G)}\sum_{j=1}^\infty\Vert \textnormal{Op}(\rho_{j,k})f\Vert_{L^\infty(G)}.
\end{align*} By using Lemma \ref{FundamentallemmaII} with $\ell:=\nu(r),$ $s=r^{-\nu}2^{-j\nu}\lambda_{\mathcal{R}}^{-\nu},$ and $\varepsilon=\frac{Q(1-\rho)}{2},$ we have that $r^{\nu(r)-\ell}=1,$ and for $m=Q(1-\rho)/2,$ we have

\begin{align*}
   I_0 &  \lesssim r^{\nu(r)} \sup_{1\leq k\leq \dim(G)}\Vert \textnormal{Op}(\sigma'_{k})f\Vert_{L^\infty(G)} \lesssim r^{\nu(r)}\sup_{1\leq k\leq \dim(G)}\sum_{j=1}^\infty r^{-\ell}2^{-j\ell} \Vert \sigma\Vert_{\ell,S^{m}_{\rho,\delta}}\Vert f\Vert_{L^\infty(G)}\\
   &=r^{\nu(r)-\ell}\sup_{1\leq k\leq \dim(G)}\sum_{j=1}^\infty 2^{-j\ell} \Vert \sigma\Vert_{\ell,S^{m}_{\rho,\delta}}\Vert f\Vert_{L^\infty(G)}\\
   &=\sup_{1\leq k\leq \dim(G)}\sum_{j=1}^\infty 2^{-j\ell} \Vert \sigma\Vert_{\ell,S^{m}_{\rho,\delta}}\Vert f\Vert_{L^\infty(G)}\\
     &\lesssim  \Vert \sigma\Vert_{\ell,S^{m}_{\rho,\delta}}\Vert f\Vert_{L^\infty(G)}.
\end{align*} Consequently,
\begin{equation*}
    \sup_{r>0} \frac{1}{|B(x_0,r)|}\int\limits_{B(x_0,r)}|A^0f(x)-(A^0f)_{B(x_0,r)}|dx\leq C \Vert \sigma\Vert_{\ell,S^{m}_{\rho,\delta}}\Vert f\Vert_{L^\infty(G)}.
\end{equation*} In order to obtain a similar $L^\infty(G)$-$BMO(G)$ estimate for $A^{1},$ we will proceed as follows. Let $\phi$ be a smooth function compactly supported in $B(x_0,2r)$ satisfying 
\begin{equation*}
    \phi(x)=1, \textnormal{  for  }\,\,x\in B(x_0,r),\textnormal{ and }0\leq \phi\leq 10.
\end{equation*}
Note that, as in the proof of Lemma \ref{LemmaJulio} we have that,
\begin{equation}\label{doubling}
      \Vert \phi\Vert_{L^2(G)}\leq 10|B(x_0,2r)|^{\frac{1}{2}}\leq 10C|B(x_0,r)|^{\frac{1}{2}}.
\end{equation} Taking into account that
\begin{align*}
    \frac{1}{|B(x_0,r)|}\int\limits_{B(x_0,r)}|A^1f(x)-(A^1f)_{B(x_0,r)}|dx\leq \frac{2}{|B(x_0,r)|}\int\limits_{B(x_0,r)}|A^1f(x)|dx,
\end{align*}
we will estimate the right-hand side. Indeed, taking into account that $\phi=1$ on $B(x_0,r),$ let us observe that
\begin{align*}
 &\frac{1}{|B(x_0,r)|} \int\limits_{B(x_0,r)}|A^1f(x)|dx=  \frac{1}{|B(x_0,r)|}\int\limits_{B(x_0,r)}|\phi(x)A^1f(x)|dx\\
  &\leq  \frac{1}{|B(x_0,r)|}\int\limits_{B(x_0,r)}|A^1[\phi f](x)|dx+\frac{1}{|B(x_0,r)|} \int\limits_{B(x_0,r)}|[M_{\phi},A^1 ] f(x)|dx\\
 :&=I+II,
\end{align*}where $M_\phi$ is the multiplication  operator by $\phi.$ 
In order to estimate $I,$ we can use Lemma \ref{LemmaJulio} with $T=A^1,$ $\tau=\sigma^1,$ and $\varepsilon=\frac{Q(1-\rho)}{2},$ in order to claim that
\begin{equation}
   I:=  \frac{1}{|B(x_0,r)|}\int\limits_{B(x_0,r)}|A^1[\phi f](x)|dx\leq  {  C\Vert \sigma_{A^1L}\Vert_{k,S^{0}_{\rho,\delta}} }\Vert f\Vert_{L^\infty(G)},
\end{equation}for some $k\in \mathbb{N}_0.$

Now, we will estimate $II.$ For this, observe that the symbol of $[M_{\phi},A^1 ]=\textnormal{Op}(\theta) ,$ is given by
\begin{equation}\label{theta}
    \theta(x,\pi)=\int\limits_{G}(\phi(x)-\phi(xy^{-1}))k_{x}(y)\pi(y)^{*}dy,
\end{equation}
where $x\mapsto k_{x},$ is the right-convolution kernel of $A^1.$ The proof of equality \eqref{theta}  is the same as in the case of compact Lie groups (see \cite[page 554]{DelgadoRuzhansky2019}).
Using the Taylor expansion we obtain
\begin{equation*}
    \phi(xy^{-1})=\phi(x)+\sum_{|\alpha|=1}(X_{x}^{\alpha}\phi)(x)\tilde{q}_{\alpha}(y),
\end{equation*}
where, every  $\tilde{q}_\alpha$ is a smooth function vanishing with order $1$ at $e_G.$
So, we can write
\begin{equation*}
    \theta(x,\pi)=\sum_{ |\alpha| =1}{X_{x}^{\alpha}\phi(x)}\Delta_{ \tilde{q}_{\alpha}} \theta(x,\pi) .
\end{equation*}
By using the decomposition 
\begin{equation*}
    \theta(x,\pi) =\sum_{j=0}^\infty \theta_j(x,\pi),\,\,\,\theta_j(x,\pi)=\theta(x,\pi)\rho(2^{-j\nu}\frac{r^{-\nu}}{\lambda^\nu_{\mathcal{R}}}\pi(\mathcal{R}))),\,\,j\geq 1,
    \end{equation*} where $\theta_{0}(x,\pi)=\eta(\frac{r^{-\nu}}{\lambda^\nu_{\mathcal{R}}}\lambda)\theta(x,\pi),$
and from Lemma \ref{FundamentallemmaII}, we have the estimate
\begin{align*}
    \Vert\theta_j\Vert_{\ell',S^{-\frac{Q(1-\rho)}{2}}_{\rho,\delta}}\leq  \Vert \sigma\Vert_{\ell',S^{-\frac{Q(1-\rho)}{2}}_{\rho,\delta}}(2^{-j\nu}r^{-\nu})^{\frac{\ell'}{\nu}}=\Vert \sigma\Vert_{\ell',S^{-\frac{Q(1-\rho)}{2}}_{\rho,\delta}}(2^{-j}r^{-1})^{\ell'},
\end{align*} where we fix $\ell'\geq 1. $ The estimate
\begin{equation*}
    \frac{1}{|B(x_0,r)|} \int\limits_{B(x_0,r)}|[M_{\phi},A^1 ] f(x)|dx\leq \Vert [M_{\phi},A^1 ] f \Vert_{L^\infty(G)},
\end{equation*} and Lemma \ref{FundamentallemmaII} imply
\begin{align*}
     \frac{1}{|B(x_0,r)|} \int\limits_{B(x_0,r)}|[M_{\phi},A^1 ] f(x)|&\leq\sum_{j=1}^\infty\Vert\textnormal{Op}(\theta_j)f \Vert_{L^\infty(G)}\\
     &\lesssim \sum_{j=1}^\infty (r^{-1}2^{-j})^{\ell'} \Vert \sigma\Vert_{\ell',S^{-\frac{Q(1-\rho)}{2}}_{\rho,\delta}}\Vert f \Vert_{L^\infty(G)}.
\end{align*}
Thus, we obtain 
\begin{equation*}
    II:= \frac{1}{|B(x_0,r)|}\int\limits_{B(x_0,r)}|[M_\phi,A^1 ]f(x)|dx\leq {  C\Vert \sigma_{A^1L}\Vert_{\ell',S^{-\frac{Q(1-\rho)}{2}}_{\rho,\delta}} }\Vert f\Vert_{L^\infty(G)}.
\end{equation*} 
So, we have the estimate
\begin{equation*}
    \Vert  A\Vert_{\mathscr{B}(L^\infty(G),BMO(G))}\leq C\max\{ \Vert\sigma\Vert_{\ell,\,S^{-\frac{Q(1-\rho)}{2}}_{\rho,\delta}},    \Vert\sigma_{A(1+\mathcal{R})^{\frac{Q(1-\rho)}{2\nu}}}\Vert_{\ell,\,S^{0}_{\rho,\delta}}  \},
\end{equation*}for the operator norm of $A,$ provided that $\delta\leq \rho.$ Now, if $\delta\leq\rho,$ the symbolic calculus developed in \cite{FischerRuzhanskyBook2015} (see Theorem \ref{calculus}) implies that $A^{*}\in S^{-\frac{Q(1-\rho)}{2}}_{\rho,\delta}(G\times \widehat{G}).$ So, by the duality argument, and the duality between $H^1(G)$ and $BMO(G)$ in the homogeneous setting (see Christ and Geller \cite{ChristGeller1984}) we conclude
that
\begin{equation*}
   \Vert  A\Vert_{\mathscr{B}(H^1(G),L^1(G)))}\leq  C\max\{ \Vert\sigma^*\Vert_{\ell,\,S^{-\frac{Q(1-\rho)}{2}}_{\rho,\delta}},    \Vert\sigma_{A^*(1+\mathcal{R})^{\frac{Q(1-\rho)}{2\nu}}}\Vert_{\ell,\,S^{0}_{\rho,\delta}}  \},
\end{equation*}
Thus, the proof of Theorem \ref{LinftyBMOCardonaDelgadoRuzhansky} is complete.
\end{proof}

\subsection{$L^p$-boundedness for  pseudo-differential operators}
Now we will analyse the $L^p$-boundedness for pseudo-differential operators essentially in two cases. The first one is  the general $L^p$-boundedness  for operators in the class $S^{-\frac{Q(1-\rho)}{2}}_{\rho,\delta}(G\times \widehat{G}).$ In the  other one case, we will study the $L^p$-boundedness of pseudo-differential operators associated to the class  $S^{-\varepsilon}_{\rho,\delta}(G\times \widehat{G}),$ with $0<\varepsilon<\frac{Q(1-\rho)}{2},$ which as in Fefferman's theorem holds true only for suitable intervals centered at $p=2.$ In the first case we obtain the following theorem.
\begin{theorem}\label{Lp1CardonaDelgadoRuzhansky2}
Let $G$ be a graded Lie group of homogeneous dimension $Q.$ Let $A:C^\infty(G)\rightarrow\mathscr{D}'(G)$ be a pseudo-differential operator with symbol $\sigma\in S^{-m}_{\rho,\delta}(G\times \widehat{G} ),$  $m\geq Q(1-\rho)/2,$ $0\leq \delta\leq \rho\leq 1,$ $\delta\neq 1.$ Then $A=\textnormal{Op}(\sigma)$ extends to a bounded operator from $L^p(G)$ to $L^p(G)$ for all $1<p<\infty.$
\end{theorem}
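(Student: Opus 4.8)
The plan is to reduce the statement to the critical order $m=Q(1-\rho)/2$ and then interpolate between three endpoint estimates. First I would observe that since $m\geq Q(1-\rho)/2$ one has the inclusion of symbol classes $S^{-m}_{\rho,\delta}(G\times\widehat{G})\subset S^{-\frac{Q(1-\rho)}{2}}_{\rho,\delta}(G\times\widehat{G})$ (a larger decay order embeds into a smaller one), so it suffices to treat a symbol $\sigma\in S^{-\frac{Q(1-\rho)}{2}}_{\rho,\delta}(G\times\widehat{G})$. For such a symbol, Theorem \ref{LinftyBMOCardonaDelgadoRuzhansky} already supplies two of the three endpoint bounds we need: $A$ extends to a bounded operator from $L^\infty(G)$ to $BMO(G)$ and from $H^1(G)$ to $L^1(G)$, with operator norms controlled by the seminorms in \eqref{LBMO} and \eqref{H1L1}. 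The hypothesis $\delta\leq\rho$ is exactly what is required here, because the $H^1$-$L^1$ bound was obtained there by duality from the $L^\infty$-$BMO$ bound for the formal adjoint $A^*$, whose symbol lies again in $S^{-\frac{Q(1-\rho)}{2}}_{\rho,\delta}(G\times\widehat{G})$ by the symbolic calculus (Theorem \ref{calculus}).

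Next I would establish the middle endpoint, namely the $L^2(G)$-boundedness of $A$. Since $-\frac{Q(1-\rho)}{2}\leq 0$ we have $\sigma\in S^{0}_{\rho,\delta}(G\times\widehat{G})$, and using $\delta\leq\rho$ this is contained in $S^{0}_{\rho,\rho}(G\times\widehat{G})$ when $\rho<1$; the Calder\'on--Vaillancourt Theorem \ref{CVT} then yields $A\colon L^2(G)\to L^2(G)$. In the remaining case $\rho=1$ the critical order is $0$, so $\sigma\in S^{0}_{1,\delta}(G\times\widehat{G})$ with $\delta<1$, and Theorem \ref{1delta} furnishes the same $L^2$ bound. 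Thus in all admissible cases $0\leq\delta\leq\rho\leq 1$, $\delta\neq 1$, the operator $A$ is bounded on $L^2(G)$.

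With the three endpoint estimates $A\colon H^1(G)\to L^1(G)$, $A\colon L^2(G)\to L^2(G)$ and $A\colon L^\infty(G)\to BMO(G)$ in hand, I would conclude by complex interpolation, treating the two sides $p<2$ and $p>2$ separately. For $1<p<2$ I interpolate the pair $(H^1,L^2)$ on the domain against $(L^1,L^2)$ on the target: choosing $\theta\in(0,1)$ with $\tfrac1p=1-\tfrac{\theta}{2}$, the identities $[H^1(G),L^2(G)]_\theta=L^p(G)$ and $[L^1(G),L^2(G)]_\theta=L^p(G)$ give the boundedness $A\colon L^p(G)\to L^p(G)$. For $2<p<\infty$ I interpolate $(L^2,L^\infty)$ on the domain against $(L^2,BMO)$ on the target, with $\theta=1-\tfrac{2}{p}$, using $[L^2(G),L^\infty(G)]_\theta=L^p(G)$ and $[L^2(G),BMO(G)]_\theta=L^p(G)$; the value $p=2$ is the $L^2$ bound already obtained. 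Since $A$ is a single fixed operator, no analytic family is required and the interpolation is purely functional-analytic.

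The step requiring the most care is the validity of the complex-interpolation identities involving the Hardy and $BMO$ spaces on a general graded (homogeneous) Lie group, namely $[H^1(G),L^2(G)]_\theta=L^p(G)$ and $[L^2(G),BMO(G)]_\theta=L^p(G)$. These rest on the atomic theory of $H^1(G)$ and the Fefferman--Stein duality $(H^1(G))'=BMO(G)$ recorded in Section \ref{Preliminaries} (following Folland--Stein and Christ--Geller), together with the analytic interpolation of this scale of spaces in the homogeneous setting. Once these identities are invoked, the conclusion for all $1<p<\infty$ is immediate; the genuinely analytic content of the theorem is entirely concentrated in the endpoint $L^\infty$-$BMO$ estimate of Theorem \ref{LinftyBMOCardonaDelgadoRuzhansky} and the $L^2$ bound, with interpolation serving only as the final packaging.
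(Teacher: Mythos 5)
Your proposal is correct and follows essentially the same route as the paper's proof: reduce to the critical order $-\frac{Q(1-\rho)}{2}$, combine the $L^\infty$-$BMO$ and $H^1$-$L^1$ endpoint bounds of Theorem \ref{LinftyBMOCardonaDelgadoRuzhansky} with the $L^2$-bound from the Calder\'on--Vaillancourt theorem (Theorem \ref{CVT}, and Theorem \ref{1delta} when $\rho=1$), and interpolate. The only (cosmetic) difference is packaging: where you invoke the complex-interpolation identities $[L^2(G),BMO(G)]_\theta=L^p(G)$ and $[H^1(G),L^2(G)]_\theta=L^p(G)$ directly, the paper phrases the same content as the Fefferman--Stein interpolation theorem for $2\leq p<\infty$ and real interpolation plus the duality argument via $A^*$ for $1<p<2$.
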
 
\begin{proof}
From Theorem \ref{LinftyBMOCardonaDelgadoRuzhansky}, for $0\leq \delta\leq \rho\leq 1,$ $\delta\neq 1,$ $A=\textnormal{Op}(\sigma)$ extends to a bounded operator from $L^\infty(G)$ to $BMO(G),$ and from the Calder\'on-Vaillancourt theorem, $A$ is also bounded on $L^2(G).$ So, the Fefferman-Stein interpolation theorem gives the $L^p$-boundedeness of $A$ for all $2\leq p< \infty.$ Now, if $0\leq \delta\leq\rho\leq 1,$ $\delta\neq 1,$ the adjoint operator $A^*\in S^{-\frac{Q(1-\rho)}{2}}_{\rho,\delta}(G\times \widehat{G}),$ extends to a bounded operator from $H^1(G)$ to $L^1(G),$ and the both, the $L^2$-boundededness of $A^*,$ the real interpolation and the  duality argument give the $L^p$-boundedeness of $A$ for all $1< p< 2.$
\end{proof}

Part (b) in Theorem \ref{MainTheorem} corresponds to the following $L^p$-boundedness theorem.
\begin{theorem}\label{partb}
Let $G$ be a graded Lie group of homogeneous dimension $Q.$ Let $A:C^\infty(G)\rightarrow\mathscr{D}'(G)$ be a pseudo-differential operator with symbol $\sigma\in S^{-m}_{\rho,\delta}(G\times \widehat{G} ),$ $0\leq \delta\leq \rho\leq 1,$ $\delta\neq 1.$ If  $1<p<\infty,$ then $A=\textnormal{Op}(\sigma)$ extends to a bounded operator from $L^p(G)$ to  $L^p(G)$ provided that \begin{equation*}
   m\geq m_{p}:=Q(1-\rho)\left|\frac{1}{p}-\frac{1}{2}\right|.
\end{equation*}
\end{theorem}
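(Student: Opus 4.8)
The plan is to reach the sharp order $m_p$ by complex interpolation of an \emph{analytic family} of operators whose order varies linearly with the interpolation parameter, using as endpoints the $L^2$-boundedness at order $0$ and the $L^\infty$-$BMO$ boundedness at order $-Q(1-\rho)/2$. Two preliminary reductions simplify matters. First, since $S^{-m}_{\rho,\delta}(G\times\widehat G)\subset S^{-m_p}_{\rho,\delta}(G\times\widehat G)$ whenever $m\ge m_p$, it suffices to treat the critical order $m=m_p$. Second, because $m_p=m_{p'}$ for conjugate exponents $1/p+1/p'=1$ and because the adjoint calculus (Theorem \ref{calculus}, applicable since $\delta\neq 1$) gives that $A^*$ has symbol in $S^{-m_p}_{\rho,\delta}(G\times\widehat G)$, by duality it is enough to prove the statement for $2\le p<\infty$. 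Write $\mu:=Q(1-\rho)/2$, and for such $p$ set $\theta:=1-2/p\in[0,1)$, so that $m_p=\mu\theta$.

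\textbf{The analytic family.} For $z$ in the closed strip $0\le\mathrm{Re}(z)\le 1$ define
\begin{equation*}
 B_z:=A\,(1+\mathcal{R})^{\frac{\mu(\theta-z)}{\nu}}=\textnormal{Op}(\sigma)\,(1+\mathcal{R})^{\frac{\mu(\theta-z)}{\nu}}.
\end{equation*}
The complex power $(1+\mathcal{R})^{\frac{\mu(\theta-z)}{\nu}}$ has symbol in $S^{\mu\,\mathrm{Re}(\theta-z)}_{1,0}(G\times\widehat G)\subset S^{\mu\,\mathrm{Re}(\theta-z)}_{\rho,\delta}(G\times\widehat G)$, while $\sigma\in S^{-\mu\theta}_{\rho,\delta}(G\times\widehat G)$; hence the composition theorem (Theorem \ref{calculus}) shows that $B_z$ is a pseudo-differential operator with symbol in $S^{-\mu\,\mathrm{Re}(z)}_{\rho,\delta}(G\times\widehat G)$. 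In particular $B_\theta=A$, the order on $\mathrm{Re}(z)=0$ is $0$, and the order on $\mathrm{Re}(z)=1$ is $-\mu=-Q(1-\rho)/2$.

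\textbf{The two endpoints and interpolation.} On the line $\mathrm{Re}(z)=0$ the symbol of $B_{iy}$ has order $0$ and type $(\rho,\delta)$ with $\delta\le\rho$, so $S^0_{\rho,\delta}\subset S^0_{\rho,\rho}$ and Theorem \ref{CVT} (or Theorem \ref{1delta} when $\rho=1$) yields $\Vert B_{iy}\Vert_{\mathscr{B}(L^2(G))}\le C(1+|y|)^{N}$. On the line $\mathrm{Re}(z)=1$ the symbol of $B_{1+iy}$ has order $-\mu$ and type $(\rho,\delta)$ with $\delta\neq 1$, so Theorem \ref{LinftyBMOCardonaDelgadoRuzhansky} gives $\Vert B_{1+iy}\Vert_{\mathscr{B}(L^\infty(G),BMO(G))}\le C(1+|y|)^{N}$. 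Applying the Fefferman–Stein complex interpolation theorem to the family $\{B_z\}$ between $L^2(G)$ and $BMO(G)$ (which is available in the homogeneous setting through the $H^1(G)$-$BMO(G)$ duality), at $z=\theta$ with $1/p=(1-\theta)/2$ we obtain that $B_\theta=A$ extends to a bounded operator on $L^p(G)$ for $2\le p<\infty$. Together with the duality reduction this covers all $1<p<\infty$.

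\textbf{Main obstacle.} The delicate point is verifying the admissibility of the family $\{B_z\}$, namely that the seminorms $\Vert\sigma_{B_{iy}}\Vert_{k,S^0_{\rho,\delta}}$ and $\Vert\sigma_{B_{1+iy}}\Vert_{\ell,S^{-\mu}_{\rho,\delta}}$ governing the endpoint operator norms grow at most polynomially in $|y|$. This amounts to controlling the seminorms of the imaginary powers $(1+\mathcal{R})^{it}$ in $S^0_{1,0}(G\times\widehat G)$ with polynomial dependence on $t$, and then propagating this polynomial growth through the quantitative estimates underlying the composition theorem (Theorem \ref{calculus}). Once this polynomial control is established, Stein's admissibility hypotheses are met and the interpolation closes the argument.
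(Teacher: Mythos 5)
Your proposal is correct and takes essentially the same route as the paper: the same two reductions (to the critical order $m=m_p$, and, via the adjoint calculus and duality, to $2\le p<\infty$), followed by Fefferman--Stein complex interpolation of an analytic family obtained by composing $A$ with complex powers of $(1+\mathcal{R})$ whose order varies linearly in $z$, between the Calder\'on--Vaillancourt $L^2$ endpoint at order $0$ and the $L^\infty$--$BMO$ endpoint of Theorem \ref{LinftyBMOCardonaDelgadoRuzhansky} at order $-Q(1-\rho)/2$. The admissibility issue you single out as the remaining obstacle is handled in the paper by inserting a damping factor into the family, namely $T_z=\textnormal{Op}\bigl(e^{z^2}\sigma(x,\pi)(1+\pi(\mathcal{R}))^{(m+\frac{Q(1-\rho)}{2}(z-1))/\nu}\bigr)$, whose modulus decays like $e^{-(\mathrm{Im}\,z)^2}$ along vertical lines; inserting the same factor into your $B_z$ closes your flagged gap.
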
 
\begin{proof} Let $0\leq \delta\leq \rho\leq  1,$ $\delta\neq 1,$ and $2\leq p<\infty.$ 
We will use the complex Fefferman-Stein interpolation theorem. We only need to prove the theorem for $m=m_p$ in view of the inclusion $S^{-m}_{\rho,\delta}(G\times \widehat{G})\subset S^{-m_p}_{\rho,\delta}(G\times \widehat{G})  $  for $m>m_p.$ Let us consider the complex family of operators indexed by $z\in \mathbb{C},$ $ \mathfrak{Re}(z)\in [0,1],$
\begin{equation*}
    T_{z}:=\textnormal{Op}(\sigma_{z}),\,\,\,\, \sigma_{z}(x,\pi):=e^{z^2}\sigma(x,\pi)(1+\pi(\mathcal{R}))^{\frac{m+\frac{Q(1-\rho)}{2}(z-1)}{\nu}}.
\end{equation*} The family of operators $\{T_{z}\},$ defines an analytic family of operator from $ \mathfrak{Re}(z)\in (0,1),$ (resp. continuous for $ \mathfrak{Re}(z)\in [0,1]$) into the algebra of bounded operators on $L^2(G).$  Let us observe that $\sigma_0(x,\pi)=\sigma(x,\pi)(1+\pi(\mathcal{R}))^{\frac{m-\frac{Q(1-\rho)}{2}}{\nu}},$ and $\sigma_{1}(x,\pi)=e\sigma(x,\pi)(1+\pi(\mathcal{R}))^{\frac{m}{\nu}}.$ Because $T_0$ is bounded from $L^\infty(G)$ into $BMO(G)$ and $T_1$ is bounded on $L^2(G),$ the Fefferman-Stein interpolation theorem implies that $T_t$ extends to a bounded operator on $L^p(G),$ for $p=\frac{2}{t}$ and all $0<t\leq 1.$ Because $0\leq m\leq\frac{Q(1-\rho)}{2}, $ there exist $t_0\in (0,1)$ such that $m=m_p=\frac{Q(1-\rho)}{2}(1-t_{0}).$ So, $T_{t_0}=e^{t_0^2}A$ extends to a bounded operator on $L^{\frac{2}{t_0}}.$ Now, we have two options for $p,$ indeed, $p\in [2,\frac{2}{t_0})$ or $p\in [\frac{2}{t_0},\infty).$ In both cases with real interpolation  between, the $L^2(G)$-boundedness and the  $L^{\frac{2}{t_0}}$-boundedness  of $A$ provide the $L^r(G)$-boundedness of $A$ for all $2\leq r\leq \frac{2}{t_0},$ and interpolating the $L^{\frac{2}{t_0}}(G)$-boundedness with the $L^\infty(G)$-$BMO(G)$ boundedness of $A$ we deduce the boundedneess of $A$ on $L^r(G)$ for all $\frac{2}{t_0}\leq r<\infty.$ So, $A$ extends to a bounded operator on $L^p(G)$ for all $2\leq p<\infty.$ The $L^p(G)$-boundedness of $A$ for $1<p\leq 2$ and $0\leq \delta\leq \rho\leq 1,$ $\delta\neq 1,$ now follows by the duality argument.
\end{proof}

\begin{remark}\label{remarkestimate}
Because, we have used the Calder\'on-Vaillacourt theorem in the graded setting (see Proposition 5.7.14 of \cite{FischerRuzhanskyBook2015}) as a crucial tool in the proof the local estimate \ref{LE} in Lemma \ref{LemmaJulio}, we improve the expected condition $\delta<\rho$ allowing the case $\rho=\delta$ on a arbitrary graded group $G,$ same as the Fefferman's theorem version by  C. Z. Li, and R. H. Wang,  \cite{RouhuaiChengzhang} for $G=\mathbb{R}^n.$  In particular we allow that case of operators of order zero an of $(1,\delta)$-type,  $0\leq \delta<1.$
\end{remark}

\begin{remark}
We found the critical  order $m_{p}=Q(1-\rho)|\frac{1}{p}-\frac{1}{2}|$ for the $L^p$-boundedness of pseudo-differential operators. For the especial case of spectral multipliers of the sub-Laplacian $\mathcal{L}$, on a stratified  Lie group this order can be relaxed, see e.g. Alexopoulos \cite{Alexopoulos1994}.
\end{remark}

\begin{remark}[Global H\"ormander classes on the Heisenberg group]\label{MainRemark}
Let us mention how  the H\"ormander classes $S^m_{\rho,\delta}(\mathbb{H}^n\times \widehat{\mathbb{H}}^n),$ $m\in \mathbb{R},$ $0\leq \delta\leq \rho\leq  1,$ $\delta\neq 1,$   can be characterized in terms of the Shubin classes (see e.g. \cite{FischerRuzhansky2014} or \cite[Chapter 6]{FischerRuzhanskyBook2015}). Indeed, using  the fact that $ \widehat{\mathbb{H}}^n\sim \mathbb{R}^{*}:= \mathbb{R}^{+}\cup \mathbb{R}^{-}, $ every (global) pseudo-differential operator $A$  with symbol $\sigma\in S^m_{\rho,\delta}(\mathbb{H}^n\times \widehat{\mathbb{H}}^n),$ defined via
\begin{equation*}
    Af(x)=\int\limits_{\mathbb{R}^*}\textnormal{\textbf{Tr}}(\pi_\lambda(g)\sigma(g,\pi_\lambda)\widehat{f}(\pi_{\mathbb{\lambda}}))d\lambda_n,
\end{equation*}
gives rise to a  parametrized family of densely defined operators $\sigma(g,\pi_{\lambda}),$ $g\in \mathbb{H}^n,$ and $\lambda\in \mathbb{R}^{*},$ defined on the Schwartz space $\mathscr{S}(\mathbb{R}^n).$ Except for a set of zero measure, every operator $\sigma(g,\pi_{\lambda})$ is a pseudo-differential operator on $\mathbb{R}^n,$ and in terms of the Weyl-quantization, it has a symbol $\sigma_{(g,\lambda)}\in C^\infty(\mathbb{R}^n\times \mathbb{R}^n ),$ such that $\sigma(g,\pi_{\lambda})=\textnormal{Op}^{w}[\sigma_{(g,\lambda)}],$ where 
\begin{equation*}
    \textnormal{Op}^{w}[\sigma_{(g,\lambda)}]h(x)=(2\pi)^{-n}\int\limits_{\mathbb{R}^n}\int\limits_{\mathbb{R}^n}e^{i(u-v)\cdot \xi}\sigma_{(g,\lambda)}(\xi,\frac{1}{2}(u+v))h(v)dvd\xi,
\end{equation*} for every $h\in \mathscr{S}(\mathbb{R}^n).$ Theorem 6.5.1 in \cite{FischerRuzhanskyBook2015} establishes the following equivalence,
\begin{equation*}
    \sigma\in S^m_{\rho,\delta}(\mathbb{H}^n\times \widehat{\mathbb{H}}^n),\textnormal{  if and only if,  }\partial_{u}^{\alpha_3} \partial_{\lambda}^{\alpha_1}\partial_{\xi}^{\alpha_2}  X_{g}^{\beta}\sigma_{(g,\lambda)}\in \Sigma_{\rho}^{m-2\rho |\alpha| +\delta  [\beta]  }(\mathbb{R}^n),
\end{equation*}
where $\Sigma_{\rho}^{m'}(\mathbb{R}^n),$ denotes the Shubin class of type $\rho$ and of order $m'\in \mathbb{R}^n,$ (see e.g. \cite[Chapter 6]{FischerRuzhanskyBook2015}) which can be defined by those symbols $a\in C^{\infty}(\mathbb{R}^n\times \mathbb{R}^n )$ satisfying,
\begin{equation}\label{Shubin}
  |\partial_{x}^{\beta}\partial_\xi^{\alpha}a(x,\xi)|\leq C_{\alpha,\beta}\langle x,\xi\rangle^{m'-\rho( |\alpha| + [\beta] )},
\end{equation}
where $\langle x, \xi\rangle:=(1+|x|^2+|\xi|^2)^{\frac{1}{2}}.$
By using that the homogeneous dimension of the Heisenberg group $\mathbb{H}^n$ is $Q=2n+2,$ in terms of the Shubin classes, Theorem \ref{MainTheorem} implies the following result.
\end{remark}
\begin{corollary}\label{MainCorollary}
 Let $A:C^\infty(\mathbb{H}^{n})\rightarrow\mathscr{D}'(\mathbb{H}^n)$ be a continuous linear operator with symbol $\sigma,$ defined by $\sigma(g,\pi_{\lambda})=\textnormal{Op}^{w}[\sigma_{(g,\lambda)}].$ Let us assume that $\partial_{u}^{\alpha_3} \partial_{\lambda}^{\alpha_1}\partial_{\xi}^{\alpha_2}   X_{g}^{\beta}\sigma_{(g,\lambda)}\in \Sigma_{\rho}^{-m-2\rho [\alpha] +\delta [\beta] }(\mathbb{R}^n),$ $0\leq \delta\leq \rho\leq 1,$ $\delta\neq 1,$ for every $\alpha$ and $\beta$ in $\mathbb{N}_0^n.$   Then,
\begin{itemize}
    \item{\textnormal{(a)}} if $m=(n+1)(1-\rho),$    then $A$ extends to a bounded operator from $L^\infty(\mathbb{H}^n)$ to $ BMO(\mathbb{H}^n).$ Moreover,   $A$ also admits a bounded extension from the Hardy space $H^1(\mathbb{H}^n)$ to $L^1(\mathbb{H}^n)$.
   \item{\textnormal{(b)}} If $m\geq m_{p}:= (2n+2)(1-\rho)\left|\frac{1}{p}-\frac{1}{2}\right|,$ then $A$ extends to a bounded operator on $ L^p(\mathbb{H}^n).$ 
\end{itemize}
\end{corollary}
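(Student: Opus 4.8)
The plan is to recognize Corollary \ref{MainCorollary} as the verbatim transcription of Theorem \ref{MainTheorem} to the Heisenberg group, once the hypotheses are rephrased in terms of the global H\"ormander calculus. The only structural inputs needed are the symbol-class dictionary recorded in Remark \ref{MainRemark}, namely the equivalence from Theorem 6.5.1 of \cite{FischerRuzhanskyBook2015},
\begin{equation*}
 \sigma\in S^{-m}_{\rho,\delta}(\mathbb{H}^n\times \widehat{\mathbb{H}}^n)\iff \partial_{u}^{\alpha_3}\partial_\lambda^{\alpha_1}\partial_\xi^{\alpha_2}X_g^\beta \sigma_{(g,\lambda)}\in \Sigma_{\rho}^{-m-2\rho[\alpha]+\delta[\beta]}(\mathbb{R}^n)\quad \forall\,\alpha,\beta\in\mathbb{N}_0^n,
\end{equation*}
together with the elementary fact that the homogeneous dimension of $\mathbb{H}^n$ equals $Q=2n+2$.

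First I would observe that the standing hypothesis of the corollary is \emph{exactly} the right-hand condition above, with the role of the order played by $-m$ in accordance with the convention $\sigma\in S^{-m}_{\rho,\delta}$ used throughout Theorem \ref{MainTheorem}. Hence the assumption is precisely equivalent to $\sigma\in S^{-m}_{\rho,\delta}(\mathbb{H}^n\times\widehat{\mathbb{H}}^n)$ with $0\le\delta\le\rho\le 1$, $\delta\ne 1$, and Theorem \ref{MainTheorem} applies directly.

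For part (a), I substitute $Q=2n+2$ into the critical order appearing in Theorem \ref{MainTheorem}(a),
\begin{equation*}
 \frac{Q(1-\rho)}{2}=\frac{(2n+2)(1-\rho)}{2}=(n+1)(1-\rho),
\end{equation*}
so that the choice $m=(n+1)(1-\rho)$ is exactly the critical order, and Theorem \ref{MainTheorem}(a) yields the $L^\infty(\mathbb{H}^n)$-$BMO(\mathbb{H}^n)$ and $H^1(\mathbb{H}^n)$-$L^1(\mathbb{H}^n)$ boundedness. For part (b), the same substitution gives
\begin{equation*}
 m_p=Q(1-\rho)\Big|\tfrac{1}{p}-\tfrac{1}{2}\Big|=(2n+2)(1-\rho)\Big|\tfrac{1}{p}-\tfrac{1}{2}\Big|,
\end{equation*}
so Theorem \ref{MainTheorem}(b) furnishes the $L^p(\mathbb{H}^n)$-boundedness for all $m\ge m_p$ and $1<p<\infty$.

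Since the substantive analytic work is already carried out in Theorem \ref{MainTheorem}, I do not expect any genuine analytic obstacle here; the only point requiring care is bookkeeping. Specifically, I would confirm that the quantifier ``for every $\alpha$ and $\beta$'' in the hypothesis matches the requirement that all the defining seminorms \eqref{seminorm} of $S^{-m}_{\rho,\delta}$ be finite, and that under the equivalence of Theorem 6.5.1 of \cite{FischerRuzhanskyBook2015} the Weyl-symbol derivatives $\partial_u^{\alpha_3}\partial_\lambda^{\alpha_1}\partial_\xi^{\alpha_2}$ correspond to the difference operators $\Delta^\alpha$ while the $X_g^\beta$ correspond to the left-invariant spatial derivatives entering \eqref{seminorm}. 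Once this correspondence is verified, the corollary follows immediately from Theorem \ref{MainTheorem}.
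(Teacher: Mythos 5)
Your proposal is correct and coincides with the paper's own route: the paper likewise derives Corollary \ref{MainCorollary} directly from Theorem \ref{MainTheorem} by invoking the equivalence of Theorem 6.5.1 of \cite{FischerRuzhanskyBook2015} recorded in Remark \ref{MainRemark} and substituting the homogeneous dimension $Q=2n+2$ of $\mathbb{H}^n$, so that $\frac{Q(1-\rho)}{2}=(n+1)(1-\rho)$ and $m_p=(2n+2)(1-\rho)\left|\frac{1}{p}-\frac{1}{2}\right|$. Your bookkeeping remarks (the order convention $-m$, and the correspondence of $\partial_u^{\alpha_3}\partial_\lambda^{\alpha_1}\partial_\xi^{\alpha_2}$ and $X_g^{\beta}$ with the difference operators and spatial derivatives in the seminorms \eqref{seminorm}) are exactly the points the paper's presentation relies on.
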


\subsection{Boundedness for pseudo-differential operators on Sobolev and Besov spaces} Now, we will present some results for the boundedness of operators on Besov and Sobolev spaces by using the $L^p$-estimates proved in Theorem \ref{MainTheorem}. We recall that the  Sobolev space $L^{p}_{s}(G)$ is  defined by the norm (see \cite[Chapter 4]{FischerRuzhanskyBook2015})
\begin{equation}\label{L2ab}
    \Vert f \Vert_{L^{p}_{s}(G)}=\Vert (1+\mathcal{R})^{\frac{s}{\nu}}f\Vert_{L^p(G)},
\end{equation} for $1<p<\infty$ and $s\in \mathbb{R}.$
\begin{theorem}\label{Sobolevtheorem}
Let $G$ be a graded Lie group of homogeneous dimension $Q.$ Let $A:C^\infty(G)\rightarrow\mathscr{D}'(G)$ be a pseudo-differential operator with symbol $\sigma\in S^{-m}_{\rho,\delta}(G\times \widehat{G} ),$ $0\leq \delta\leq \rho\leq 1,$ $\delta\neq 1.$ Then, if $1<p<\infty,$ and $s\in \mathbb{R},$ the operator $A=\textnormal{Op}(\sigma)$ extends to a bounded operator from $L^p_{s}(G)$ to  $L^p_{s}(G)$ provided that 
\begin{equation*}
   m\geq m_{p}:=Q(1-\rho)\left|\frac{1}{2}-\frac{1}{p}\right|.
\end{equation*}
\end{theorem}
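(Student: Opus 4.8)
The plan is to reduce the $L^p_s$-boundedness to the $L^p$-boundedness already established in Theorem \ref{partb} by conjugating $A$ with the Bessel-type potentials $(1+\mathcal{R})^{\pm s/\nu}$ that define the Sobolev norm in \eqref{L2ab}. Concretely, by definition one has $\Vert Af\Vert_{L^p_s(G)}=\Vert (1+\mathcal{R})^{\frac{s}{\nu}}Af\Vert_{L^p(G)}$, so upon writing $g:=(1+\mathcal{R})^{\frac{s}{\nu}}f$ it suffices to prove that the conjugated operator
\begin{equation*}
B:=(1+\mathcal{R})^{\frac{s}{\nu}}\,A\,(1+\mathcal{R})^{-\frac{s}{\nu}}
\end{equation*}
is bounded on $L^p(G)$; indeed then $\Vert Af\Vert_{L^p_s(G)}=\Vert Bg\Vert_{L^p(G)}\lesssim \Vert g\Vert_{L^p(G)}=\Vert f\Vert_{L^p_s(G)}$.

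First I would record that the left-invariant Fourier multipliers $(1+\mathcal{R})^{\pm \frac{s}{\nu}}$ belong to $S^{\pm s}_{1,0}(G\times\widehat{G})$, which is exactly the content of Theorem 5.2.22 of \cite{FischerRuzhanskyBook2015} already invoked in the proof of Lemma \ref{LemmaJulio}. Under the standing hypothesis $0\leq\delta\leq\rho\leq 1$ one has the inclusions $S^{\pm s}_{1,0}(G\times\widehat{G})\subset S^{\pm s}_{\rho,0}(G\times\widehat{G})\subset S^{\pm s}_{\rho,\delta}(G\times\widehat{G})$, so both potentials lie in the $(\rho,\delta)$-H\"ormander calculus in their respective orders $\pm s$.

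Next I would apply the composition rule of Theorem \ref{calculus}, which is available precisely because the hypothesis $0\leq\delta\leq\rho\leq 1$ together with $\delta\neq 1$ forces $\delta<1$. Composing twice---first $A\,(1+\mathcal{R})^{-\frac{s}{\nu}}\in S^{-m-s}_{\rho,\delta}(G\times\widehat{G})$, and then multiplying on the left by $(1+\mathcal{R})^{\frac{s}{\nu}}\in S^{s}_{\rho,\delta}(G\times\widehat{G})$---yields
\begin{equation*}
B=(1+\mathcal{R})^{\frac{s}{\nu}}\,A\,(1+\mathcal{R})^{-\frac{s}{\nu}}\in S^{\,s-m-s}_{\rho,\delta}(G\times\widehat{G})=S^{-m}_{\rho,\delta}(G\times\widehat{G}).
\end{equation*}
Thus $B$ is a pseudo-differential operator of the \emph{same} order $-m$ as $A$. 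Since $m\geq m_p=Q(1-\rho)\left|\frac{1}{2}-\frac{1}{p}\right|$, Part (b) of the main theorem, i.e. Theorem \ref{partb}, applies to $B$ and gives its $L^p(G)$-boundedness for every $1<p<\infty$, which is exactly what is needed.

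The argument is essentially formal once the symbolic calculus is in place, so no new analytic input beyond Theorem \ref{partb} is required. The only step deserving care is the applicability of the composition theorem: here the main point is that $\delta\neq 1$ combined with $\delta\leq\rho\leq 1$ indeed guarantees $\delta<1$, ensuring that Theorem \ref{calculus} holds and that the conjugation produces no loss in the order, so that $B$ has order exactly $-m$ rather than something worse.
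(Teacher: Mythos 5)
Your proof is correct and follows essentially the same route as the paper: the paper likewise reduces matters to the $L^p$-boundedness of the conjugated operator $(1+\mathcal{R})^{\frac{s}{\nu}}A(1+\mathcal{R})^{-\frac{s}{\nu}}$, using that $(1+\mathcal{R})^{\frac{s}{\nu}}$ is an isomorphism on $\mathscr{S}(G)$, that the global symbolic calculus places this conjugate in $S^{-m}_{\rho,\delta}(G\times\widehat{G})$, and then invokes Theorem \ref{partb}. Your additional care about the inclusions $S^{\pm s}_{1,0}\subset S^{\pm s}_{\rho,\delta}$ and the hypothesis $\delta<1$ for the composition theorem only makes explicit what the paper leaves implicit.
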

\begin{proof}
For the proof, we only need to show that there exists a positive constant $C>0$ such that 
\begin{equation}
    \Vert(1+\mathcal{R})^{\frac{s}{\nu}} Af\Vert_{L^p(G)}\leq C\Vert (1+\mathcal{R})^{\frac{s}{\nu}}f\Vert_{L^p(G)},
\end{equation}for every $f\in \mathscr{S}(G).$ Because $(1+\mathcal{R})^{\frac{s}{\nu}}:\mathscr{S}(G)\rightarrow \mathscr{S}(G),$ is an isomorphism of Frechet spaces we only need to prove that the estimate
\begin{equation}\label{eqisob}
    \Vert(1+\mathcal{R})^{\frac{s}{\nu}} A(1+\mathcal{R})^{-\frac{s}{\nu}}f\Vert_{L^p(G)}\leq C\Vert f\Vert_{L^p(G)},
\end{equation}holds true. However, from the global calculus developed in \cite{FischerRuzhanskyBook2015}, we have that $(1+\mathcal{R})^{\frac{s}{\nu}} A(1+\mathcal{R})^{-\frac{s}{\nu}}\in S^{-m}_{\rho,\delta}(G\times \widehat{G}).$ So, the estimate \ref{eqisob} now follows from Theorem \ref{partb}.
\end{proof}
Now, we will use the boundedness of operators on Sobolev spaces to deduce similar properties on Besov spaces. We refer the reader to \cite{CR} for the definition of Besov spaces on graded Lie groups and well as some of their properties.
\begin{theorem}\label{Besovtheorem}
Let $G$ be a graded Lie group of homogeneous dimension $Q.$ Let $A:C^\infty(G)\rightarrow\mathscr{D}'(G)$ be a pseudo-differential operator with symbol $\sigma\in S^{-m}_{\rho,\delta}(G\times \widehat{G} ),$ $0\leq \delta\leq \rho\leq1,$ $\delta\neq 1.$ Then $A=\textnormal{Op}(\sigma)$ extends to a bounded operator from $B^s_{p,q}(G)$ to  $B^s_{p,q}(G)$ for 
\begin{equation*}
   m\geq m_{p}:=Q(1-\rho)\left|\frac{1}{2}-\frac{1}{p}\right|,
\end{equation*}for all $0<q\leq \infty,$  $s\in\mathbb{R},$  and  $1<p<\infty.$
\end{theorem}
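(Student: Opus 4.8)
The plan is to reduce the Besov-space boundedness to the Sobolev-space boundedness already obtained in Theorem \ref{Sobolevtheorem} by means of real interpolation. The key structural fact I would invoke is that the Besov spaces $B^s_{p,q}(G)$ on a graded Lie group can be realized as real interpolation spaces between two $L^p$-Sobolev spaces adapted to the Rockland operator $\mathcal{R}$; this is part of the theory of Besov spaces on graded groups developed in \cite{CR}, and it is the only external input needed beyond Theorem \ref{Sobolevtheorem}.

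Concretely, fix $s\in\mathbb{R}$, $1<p<\infty$ and $0<q\leq\infty$. I would choose real numbers $s_0<s<s_1$ and a parameter $\theta\in(0,1)$ with $s=(1-\theta)s_0+\theta s_1$, and then use the identification
\[
B^s_{p,q}(G)=\bigl(L^p_{s_0}(G),\,L^p_{s_1}(G)\bigr)_{\theta,q},
\]
valid for the full range $0<q\leq\infty$ (in the quasi-Banach regime when $q<1$). This identity encodes, via the lifting property of $(1+\mathcal{R})^{t/\nu}$ and the Littlewood-Paley characterization of $B^s_{p,q}(G)$, the interaction between the $\mathcal{R}$-defined Sobolev and Besov scales.

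By Theorem \ref{Sobolevtheorem}, since $m\geq m_p=Q(1-\rho)\left|\tfrac{1}{2}-\tfrac{1}{p}\right|$, the operator $A=\textnormal{Op}(\sigma)$ extends to a bounded operator on $L^p_{t}(G)$ for \emph{every} $t\in\mathbb{R}$; in particular it is bounded on both endpoint spaces $L^p_{s_0}(G)$ and $L^p_{s_1}(G)$, with operator norms controlled by suitable seminorms of $\sigma$. The general interpolation property of the real method then yields that $A$ is bounded on the interpolation space, with
\[
\Vert A\Vert_{\mathscr{B}(B^s_{p,q}(G))}\lesssim \Vert A\Vert_{\mathscr{B}(L^p_{s_0}(G))}^{1-\theta}\,\Vert A\Vert_{\mathscr{B}(L^p_{s_1}(G))}^{\theta},
\]
which is exactly the asserted boundedness from $B^s_{p,q}(G)$ into $B^s_{p,q}(G)$.

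I expect the only genuine obstacle to be the justification of the interpolation identity itself, that is, knowing that the Bessel-potential Sobolev scale $\{L^p_t(G)\}_t$ reproduces the Besov scale under $(\cdot,\cdot)_{\theta,q}$; this must be extracted from the Littlewood-Paley theory and embedding properties recorded in \cite{CR}, and some care is required in the quasi-Banach case $q<1$. Once this identity is granted, the argument is immediate and requires no further estimates on the symbol $\sigma$ beyond those already established in Theorem \ref{Sobolevtheorem}.
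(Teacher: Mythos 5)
Your proposal is correct, and it follows the same overall strategy as the paper: both reduce the Besov bound to Theorem \ref{Sobolevtheorem} and then apply real interpolation together with an identification of the resulting interpolation space as $B^s_{p,q}(G)$ from \cite{CR}. The difference lies in the choice of interpolation couple. You fix the integrability exponent $p$ and interpolate in the smoothness parameter, using $\bigl(L^p_{s_0}(G),L^p_{s_1}(G)\bigr)_{\theta,q}=B^s_{p,q}(G)$ with $s=(1-\theta)s_0+\theta s_1$, $s_0\neq s_1$; the paper instead fixes the smoothness $s$ and interpolates in the integrability parameter, invoking Theorem 3.2 of \cite{CR} in the form $\bigl(L^{p_0}_{s}(G),L^{p_1}_{s}(G)\bigr)_{\theta,q}=B^{s}_{p,q}(G)$ with $1/p=\theta/p_0+(1-\theta)/p_1$. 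Your couple is the classical one (the graded analogue of the Bergh--L\"ofstr\"om identity between Bessel-potential spaces and Besov spaces), and it is arguably the safer route: real interpolation of $L^p$-type spaces with \emph{varying} $p$ and a free second parameter $q$ typically produces Lorentz--Sobolev rather than Besov spaces, so the identity as displayed in the paper's proof is the more delicate claim and rests entirely on the precise statement in \cite{CR}, whereas the smoothness-direction identity you use follows from the Littlewood--Paley characterization by a standard retraction argument and is valid for the full range $0<q\leq\infty$, including the quasi-Banach case you correctly flag. In both arguments the endpoint input is identical — Theorem \ref{Sobolevtheorem} for $m\geq m_p$ gives boundedness on every space in the relevant Sobolev scale — so once the respective interpolation identity is granted, each proof is immediate; yours simply shifts the burden onto the better-documented identity.
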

\begin{proof}
 We will use the real interpolation of Banach spaces to deduce the Besov boundedness of $A.$ If  $s\in \mathbb{R},$ Theorem \ref{Sobolevtheorem} shows that  $A$ extends to a bounded operator from $L^{p}_{s}(G)$ into $L^{p}_{s}(G)$ for every $1<p<\infty.$ In particular, if $1<p_0<p_1<\infty$ and $\theta\in (0,1)$ satisfies $1/p=\theta/p_0+(1-\theta)/p_1,$ then from the boundedness of the following bounded extensions of $A,$
 \begin{equation}
     A:L^{p_0}_{s}(G)\rightarrow L^{p_0}_{s}(G),\,\,\,\,A:L^{p_1}_{s}(G)\rightarrow L^{p_1}_{s}(G),
 \end{equation} and by the real interpolation of Banach spaces, we deduce that
 \begin{equation}
     A:(L^{p_0}_{s}(G),L^{p_1}_{s}(G))_{(\theta,q)}\rightarrow (L^{p_0}_{s}(G),L^{p_1}_{s}(G))_{(\theta,q)},\,\,\,\,0<q<\infty.
 \end{equation} From Theorem 3.2 of \cite{CR}, $(L^{p_0}_{s}(G),L^{p_1}_{s}(G))_{(\theta,q)}=B^{s}_{p,q}(G)$ for every $s\in \mathbb{R},$ and we conclude that  $A$ extends to a bounded operator from $B^{s}_{p,q}(G)$ into $B^{s}_{p,q}(G).$ 
 \end{proof}

\begin{remark}
Theorem \ref{Besovtheorem} extends to the non-commutative setting and in the case $0\leq \delta\leq \rho\leq  1,$ $\delta\neq 1,$ the classical Besov estimates for H\"ormander classes. We refer the reader e.g.  to Bourdaud \cite{Bourdaud} and Park \cite{Park} for details on the subject in the case of $\mathbb{R}^n.$
\end{remark}

\subsection{Local H\"ormander classes on graded Lie groups}
 Let $0\leq \delta,\rho\leq 1,$ and let $\mathcal{R}$ be a positive Rockland operator of homogeneous degree $\nu>0.$ If $m\in \mathbb{R},$ we say that the symbol $\sigma\in L^\infty_{a,b}(\widehat{G}), $ where $a,b\in\mathbb{R},$ belong locally  to the $(\rho,\delta)$-H\"ormander class of order $m,$ $S^m_{\rho,\delta,\textnormal{loc}}(G\times \widehat{G}),$ if for all $\gamma\in \mathbb{R},$ and  for every compact subset $K\subset G,$ the following conditions
\begin{equation*}
   p_{\alpha,\beta,\gamma,m,K}(\sigma)= \operatornamewithlimits{ess\, sup}_{(x,\pi)\in K\times \widehat{G}}\Vert \pi(1+\mathcal{R})^{\frac{\rho [\alpha] -\delta [\beta] -m-\gamma}{\nu}}[X_{x}^\beta \Delta^{\alpha}\sigma(x,\pi)] \pi(1+\mathcal{R})^{\frac{\gamma}{\nu}}\Vert_{\textnormal{op}}<\infty,
\end{equation*}
hold true for all $\alpha$ and $\beta$ in $\mathbb{N}_0^n.$ The resulting class $S^m_{\rho,\delta,\textnormal{loc}}(G\times \widehat{G}),$ does not depend on the choice of the Rockland operator $\mathcal{R}.$ 
These local versions of H\"ormander classes also provide a symbolic calculus closed under compositions, adjoints, and existence of parametrices. The following is a localised version of Theorem \ref{partb}.
\begin{theorem}\label{LocalSob}
Let $G$ be a graded Lie group of homogeneous dimension $Q.$  Let $A:C^\infty(G)\rightarrow\mathscr{D}'(G)$ be a pseudo-differential operator with symbol $\sigma\in S^{-m}_{\rho,\delta,\textnormal{loc}}(G\times\widehat{G} ).$ 
 If $m\geq m_{p}:= Q(1-\rho)\left|\frac{1}{p}-\frac{1}{2}\right|,$ then $A$ extends to a bounded operator from  $ L^p_{\textnormal{comp}}(G)$ to $ L^p_{\textnormal{loc}}(G),$  provided that  $0\leq \delta\leq \rho\leq  1,$ $\delta\neq 1,$ and $ 1<p<\infty.$
\end{theorem}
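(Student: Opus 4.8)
The plan is to deduce the localized boundedness from the global estimate of Theorem \ref{partb} by multiplying $A$ by smooth compactly supported cut-offs in the spatial variable and checking that the resulting operator is a genuine (globally defined) pseudo-differential operator of the same order $-m$. Fix $f\in L^p_{\textnormal{comp}}(G)$ with $\textnormal{supp}(f)\subset L$ for a compact set $L\subset G$, and fix an arbitrary compact set $K\subset G$; by the definition of $L^p_{\textnormal{loc}}(G)$ it is enough to bound $\Vert \chi_K Af\Vert_{L^p(G)}$ by $C_{K,L}\Vert f\Vert_{L^p(G)}$, where $\chi_K$ is the characteristic function of $K$. Choose $\psi\in C^\infty_0(G)$ with $\psi\equiv 1$ on $K$. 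Since the quantization \eqref{Quantization} is linear in the symbol and the scalar $\psi(x)$ may be absorbed into the trace, left multiplication by $\psi$ acts at the level of symbols as $M_\psi\textnormal{Op}(\sigma)=\textnormal{Op}(\psi\sigma)$, where $(\psi\sigma)(x,\pi):=\psi(x)\sigma(x,\pi)$.

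The key step, and the one I expect to be the main obstacle, is to show that $\psi\sigma$ belongs to the \emph{global} class $S^{-m}_{\rho,\delta}(G\times\widehat G)$, even though $\sigma$ is only assumed to lie in the local class $S^{-m}_{\rho,\delta,\textnormal{loc}}(G\times\widehat G)$. For this I would use that the difference operators $\Delta^\alpha$ act solely on the dual variable and therefore commute with multiplication by $\psi(x)$, so that $\Delta^\alpha(\psi\sigma)=\psi\,\Delta^\alpha\sigma$, while the Leibniz rule for the left-invariant derivatives gives $X^\beta_x\Delta^\alpha(\psi\sigma)=\sum_{\beta_1+\beta_2=\beta}\binom{\beta}{\beta_1}(X^{\beta_1}\psi)\,(X^{\beta_2}_x\Delta^\alpha\sigma)$. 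Because every factor $X^{\beta_1}\psi$ is supported in the fixed compact set $\textnormal{supp}(\psi)$, the essential supremum over all $x\in G$ in the global seminorm $p_{\alpha,\beta,\gamma,-m}(\psi\sigma)$ collapses to an essential supremum over $x\in\textnormal{supp}(\psi)$, which is controlled by $\Vert X^{\beta_1}\psi\Vert_{L^\infty(G)}\,p_{\alpha,\beta_2,\gamma,-m,\textnormal{supp}(\psi)}(\sigma)$; the mismatch between the left weight $\pi(1+\mathcal R)^{-\delta[\beta]/\nu}$ appearing in $p_{\alpha,\beta,\gamma,-m}$ and the weight $\pi(1+\mathcal R)^{-\delta[\beta_2]/\nu}$ natural to the local seminorm is harmless, since $\pi(1+\mathcal R)^{-\delta([\beta]-[\beta_2])/\nu}$ is a contraction for $[\beta]\geq[\beta_2]$ and $\delta\geq 0$. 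Summing the finitely many terms shows that all seminorms of $\psi\sigma$ are finite, i.e. $\psi\sigma\in S^{-m}_{\rho,\delta}(G\times\widehat G)$.

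With this in hand the conclusion is immediate: since $m\geq m_p=Q(1-\rho)|1/p-1/2|$ and $0\leq\delta\leq\rho\leq 1$, $\delta\neq 1$, Theorem \ref{partb} applies to $\textnormal{Op}(\psi\sigma)=M_\psi A$, which therefore extends to a bounded operator on $L^p(G)$. As $\psi\equiv 1$ on $K$ we have $\chi_K Af=\chi_K\,\psi\,Af=\chi_K\,M_\psi Af=\chi_K\,\textnormal{Op}(\psi\sigma)f$, and hence $\Vert\chi_K Af\Vert_{L^p(G)}\leq\Vert\textnormal{Op}(\psi\sigma)\Vert_{\mathscr B(L^p(G))}\Vert f\Vert_{L^p(G)}$. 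The compact support of $f$ guarantees that $Af$ is a well-defined distribution, and by approximating $f$ in $L^p(G)$ by elements of $C^\infty_0(G)$ supported in a fixed neighbourhood of $L$ the displayed identities pass to the limit; this gives $Af\in L^p_{\textnormal{loc}}(G)$ and the asserted boundedness $L^p_{\textnormal{comp}}(G)\to L^p_{\textnormal{loc}}(G)$. If one prefers to keep the operator manifestly global on both sides, one may instead insert a second cut-off $\phi\in C^\infty_0(G)$ with $\phi\equiv 1$ on $L$, write $\chi_K Af=\chi_K M_\psi A M_\phi f$, observe that $M_\phi$ (whose symbol is $\phi(x)$ times the identity) lies in $\textnormal{Op}(S^0_{\rho,\delta}(G\times\widehat G))$, and invoke the composition calculus of Theorem \ref{calculus} to place $M_\psi A M_\phi$ in $\textnormal{Op}(S^{-m}_{\rho,\delta}(G\times\widehat G))$ before applying Theorem \ref{partb}.
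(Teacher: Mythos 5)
Your proposal is correct and follows essentially the same route as the paper: the paper's proof also multiplies the symbol by a compactly supported cut-off $\omega'\in C^\infty_0(G)$, asserts $\omega'\sigma\in S^{-m}_{\rho,\delta}(G\times\widehat{G})$, and applies Theorem \ref{partb} to $\Vert\omega' A(\omega f)\Vert_{L^p(G)}\leq C_K\Vert\omega f\Vert_{L^p(G)}$. Your only addition is to actually verify the global symbol-class membership of $\psi\sigma$ (via $\Delta^\alpha(\psi\sigma)=\psi\,\Delta^\alpha\sigma$, the Leibniz rule in $x$, and the contraction $\pi(1+\mathcal{R})^{-\delta([\beta]-[\beta_2])/\nu}$), a step the paper states without proof, and that verification is sound.
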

\begin{proof}
Let us assume that $f\in L^p_{\textnormal{loc}}(G).$ Then, for every compactly supported function $\omega\in C^\infty_0(G),$ $f\omega\in L^p(G).$  If $K$ is a compact subset of $G$ and $\omega'\in C^\infty_0(G)$ has compact support in $K,$ then the symbol $\omega'\sigma:=\{\omega'(x)\sigma(x,\pi)\}_{(x,\pi)\in G\times \widehat{G}}\in S^{-m}_{\rho,\delta}(G\times \widehat{G} ). $ So, in view of Theorem \ref{partb}, for some positive constant $C_{K}>0,$ (depending on $K,$ $\omega$ and $\omega'$) we have
\begin{equation*}
    \Vert \omega'A(\omega f)\Vert_{L^p(G)}\leq C_{K}\Vert \omega f \Vert_{L^p(G)},
\end{equation*} provided that $m\geq Q(1-\rho)\left|\frac{1}{p}-\frac{1}{2} \right|.$ Thus, $A$ extends to a bounded operator from  $ L^p_{\textnormal{comp}}(G)$ to $ L^p_{\textnormal{loc}}(G),$ and this concludes the proof.
\end{proof}

\begin{remark}\label{losswin} It is convenient to define the local Sobolev spaces for $s\in \mathbb{R},$ and $1<p<\infty,$ by 
\begin{equation*}
L^{p}_{s}(G,loc)=\{f\in \mathcal{D}'(G):\phi\cdot f\in L^{p}_{s}(G),\textnormal{ for all }\phi\in C^{\infty}_{0}(G)\}.
\end{equation*}  In view of the embedding (see \cite[Page 240]{FischerRuzhanskyBook2015})
\begin{equation}\label{enb}
L^{p}_{\frac{s}{\nu_1}}(G,loc)\subset L^p_{s}(\mathbb{R}^n,loc)\subset L^{p}_{\frac{s}{\nu_n}}(G,loc),
\end{equation}where $\nu_{1}\leq \nu_2\leq \dots \leq \nu_{n}$ are the weights associated to the homogeneous structure of $G,$ Theorem \ref{LocalSob}, implies that if $A:C^\infty(G)\rightarrow\mathscr{D}'(G)$ is a pseudo-differential operator with compactly supported with respect to $x,$ symbol $\sigma\in S^{-m}_{\rho,\delta,\textnormal{loc}}(G\times \widehat{G} ),$ then for $m\geq m_{p}:= Q(1-\rho)\left|\frac{1}{p}-\frac{1}{2}\right|,$ the linear operator  $A$ extends to a bounded operator from  $ L^{p}_{s}(G,loc)$ to $ L^{p}_{s}(G,loc),$ for any $1<p<\infty.$ Now, in view of \eqref{enb}, we also have that  $A$ extends to a bounded operator from  $ L^{p}_{\frac{s}{\nu_1}}(G,loc)$ to $ L^{p}_{s}(\mathbb{R}^n,loc),$ and from  $L^{p}_{s}(\mathbb{R}^n,loc) $ to $ L^{p}_{\frac{s}{\nu_n}}(G,loc).$ In the first situation we gain regularity with order $s-\frac{s}{\nu_1}=s(1-\frac{1}{\nu_1})\geq 0$ and in the other one, we lose regularity with order $\frac{s}{\nu_n}-s=s(\frac{1}{\nu_n}-1)\leq 0.$ In terms of this discussion, let us note that for $G=\mathbb{R}^n,$ we have $s(1-\frac{1}{\nu_1})=s(\frac{1}{\nu_n}-1)= 0,$ because in this case $\nu_1=\nu_n=1.$ Finally, for every $s\in\mathbb{R},$ $A: L^{p}_{s}(\mathbb{R}^n,loc)\rightarrow L^{p}_{\frac{s\nu_1}{\nu_n}}(\mathbb{R}^n,loc)$ extends to a bounded operator showing that in local Sobolev spaces on $\mathbb{R}^n$ we lose regularity.
\end{remark}

\begin{remark}\label{losswin2} Local Besov spaces for $s\in \mathbb{R},$ $0<q\leq \infty,$ and $1<p<\infty,$ are defined by 
\begin{equation*}
B^s_{p,q}(G,loc)=\{f\in \mathcal{D}'(G):\phi\cdot f\in B^s_{p,q}(G),\textnormal{ for all }\phi\in C^{\infty}_{0}(G)\}.
\end{equation*}  In view of the embedding (see \cite[Page 404]{CR})
\begin{equation}\label{enb2}
B^{\frac{s}{\nu_1}}_{p,q}(G,loc)\subset B^{s}_{p,q}(\mathbb{R}^n,loc)\subset B^{\frac{s}{\nu_n}}_{p,q}(G,loc),
\end{equation} Theorem \ref{LocalSob} implies that an operator $A:C^\infty(G)\rightarrow\mathscr{D}'(G)$  with compactly supported symbol in $x,$ $\sigma\in S^{-m}_{\rho,\delta,\textnormal{loc}}(G\times \widehat{G} ),$ where $m\geq m_{p}:= Q(1-\rho)\left|\frac{1}{p}-\frac{1}{2}\right|,$  extends to a bounded operator from  $ B^{s}_{p,q}(G,loc)$ to $ B^s_{p,q}(G,loc),$ for any $1<p<\infty,$ and $0<q\leq \infty.$ By using \eqref{enb2}, we deduce that  $A$ extends to a bounded operator from  $ B^{\frac{s}{\nu_1}}_{p,q}(G,loc)$ to $ B^{s}_{p,q}(\mathbb{R}^n,loc),$ and from  $B^{s}_{p,q}(\mathbb{R}^n,loc) $ to $B^{\frac{s}{\nu_n}}_{p,q}(G,loc).$ This analysis allows us to conclude that  for every $s\in\mathbb{R},$ $A: B_{p,q}^{s}(\mathbb{R}^n,loc)\rightarrow B_{p,q}^{\frac{s\nu_1}{\nu_n}}(\mathbb{R}^n,loc)$ extends to a bounded operator. As in the case of local Sobolev spaces we also lose regularity unless  $G=\mathbb{R}^n$ or $s=0.$
\end{remark}

\bibliographystyle{amsplain}

\end{document}